\documentclass{amsart}
\usepackage[english]{babel}
\usepackage{amsfonts}
\usepackage{amssymb}
\usepackage{hyperref}
\usepackage{tensor}

\usepackage{amsmath}
\usepackage{amsthm}
\usepackage{amssymb}

\usepackage{mathtools}
\usepackage{enumerate}
\usepackage{tikz}
\usepackage{tikz-cd}
\usepackage{varwidth}
\usepackage{pigpen}
\usepackage{mathabx}
\usepackage{graphicx}
\graphicspath{ {./images/} }
\usetikzlibrary{decorations.pathmorphing}
\usepackage{etoolbox}

\makeatletter
\newtheorem*{rep@theorem}{\rep@title}
\newcommand{\newreptheorem}[2]{%
\newenvironment{rep#1}[1]{%
 \def\rep@title{#2 \ref{##1}}%
 \begin{rep@theorem}}%
 {\end{rep@theorem}}}
\makeatother


\setcounter{tocdepth}{2}
\makeatletter
\def\l@subsection{\@tocline{2}{0pt}{2.5pc}{5pc}{}}
\makeatother

\usetikzlibrary{shapes,backgrounds, calc, angles, positioning}
\makeatother

\newtheorem{theorem}{Theorem}
\newreptheorem{theorem}{Theorem}
\newtheorem{lemma}[theorem]{Lemma}

\newtheorem{proposition}[theorem]{Proposition}

\theoremstyle{definition}

\newtheorem{remark}[theorem]{Remark}
\newtheorem{definition}[theorem]{Definition}
\newtheorem{example}[theorem]{Example}

\newtheorem*{question}{Question}

\numberwithin{theorem}{section}
\numberwithin{equation}{theorem}



\newcommand{\CD}{\mathcal{D}}

\newcommand{\CF}{\mathcal{F}}
\newcommand{\CG}{\mathcal{G}}
\newcommand{\CH}{\mathcal{H}}
\newcommand{\CI}{\mathcal{I}}
\newcommand{\CU}{\mathcal{U}}
\newcommand{\CK}{\mathcal{K}}

\newcommand{\CM}{\mathcal{M}}
\newcommand{\CN}{\mathcal{N}}
\newcommand{\CO}{\mathcal{O}}
\newcommand{\CP}{\mathcal{P}}
\newcommand{\CT}{\mathcal{T}}

\newcommand{\CV}{\mathcal{V}}
\newcommand{\CW}{\mathcal{W}}


\newcommand{\QQ}{\mathbb{Q}}
\newcommand{\RR}{\mathbb{R}}

\newcommand{\ZZ}{\mathbb{Z}}



\newcommand{\BC}{\mathbf{C}}
\newcommand{\BD}{\mathbf{D}}

\newcommand{\BF}{\mathbf{F}}

\renewcommand{\hat}{\widehat}
\newcommand{\SingLieGrpd}{\mathbf{SingLieGrpd}}

\newcommand{\Lie}{\mathbf{Lie}}
\newcommand{\LocLieGrpd}{\mathbf{LocLieGrpd}}

\newcommand{\LieGrpd}{\mathbf{LieGrpd}}
\newcommand{\LieAlg}{\mathbf{LieAlg}}
\newcommand{\Alg}{\mathbf{Alg}}
\newcommand{\SingLocLieGrpd}{\mathbf{SingLocLieGrpd}}

\newcommand{\Diffgl}{\mathbf{Diffgl}}

\newcommand{\Euc}{\mathbf{Eucl}}
\newcommand{\Set}{\mathbf{Set}}

\newcommand{\QUED}{\mathbf{QUED}}
\newcommand{\Man}{\mathbf{Man}}

\newcommand{\s}{s}
\renewcommand{\t}{t}
\newcommand{\m}{m}
\renewcommand{\u}{u}
\renewcommand{\i}{i}


\renewcommand{\subset}{\subseteq}

\newcommand{\til}{\widetilde}
\newcommand{\dif}[1]{\ifstrequal{#1}{}{\mathop{d\,\!}}{\mathop{d#1}}}

\DeclareMathOperator{\pr}{pr}
\DeclareMathOperator{\Id}{Id}

\newcommand{\into}{\hookrightarrow}
\newcommand{\grpd}{\rightrightarrows}

\newcommand{\inv}{^{-1}}

\newcommand{\divi}{\mathbf{\delta}}
\newcommand{\fiber}[2]{\!\tensor*[^{}_{#1}]{\times}{^{}_{#2}}}

\newcommand{\comment}[1]{}
\newcommand{\darrow}{\arrow[d, shift left]\arrow[d, shift right]}

\title{On the integrability of Lie algebroids by diffeological spaces}
\author{Joel Villatoro}
\date{Sep 2023}
\address{Washington University in St Louis, Department of Mathematics}
\email{joelv@wustl.edu}

\begin{document}

\begin{abstract}
Lie's third theorem does not hold for Lie groupoids and Lie algebroids. In this article, we show that Lie's third theorem is valid within a specific class of diffeological groupoids that we call `singular Lie groupoids.' To achieve this, we introduce a subcategory of diffeological spaces which we call `quasi-etale.' Singular Lie groupoids are precisely the groupoid objects within this category, where the unit space is a manifold.

Our approach involves the construction of a functor that maps singular Lie groupoids to Lie algebroids, extending the classical functor from Lie groupoids to Lie algebroids. We prove that the Ševera-Weinstein groupoid of an algebroid is an example of a singular Lie groupoid, thereby establishing Lie's third theorem in this context.
\end{abstract}
\maketitle

\setcounter{tocdepth}{2}
\newpage
\tableofcontents
\newpage

\section{Introduction}
Lie theory provides us with a differentiation procedure that takes global symmetries geometric objects (Lie groupoids) as input and outputs infinitesimal symmetry algebras (Lie algebroids). This differentiation procedure takes the form of a functor:
\[\Lie \colon \LieGrpd \to \LieAlg \]
Given infinitesimal data, such as a morphism or object in \( \LieAlg\), the ``integration problem'' refers to constructing a corresponding global data in \( \LieGrpd \).

In the classical case of finite dimensional Lie groups and Lie algebras, Lie's second theorem Lie's second and third theorems are concerned with the existence of integrations of morphisms and objects, respectively. If one replaces the classical condition of ``simply connected'' with ``source simply-connected'', Lie's second theorem holds for the more general setting of Lie groupoids. On the other hand, Lie's third theorem is known to be false. In other words, there exist Lie algebroids which are not integrated by any Lie groupoid. The first example of a non-integrable algebroid is due to Rui Almeida and Pierre Molino~\cite{almeida_suites_1985}.

To any algebroid \( A \) it is possible to functorially associate a kind of fundamental groupoid \( \Pi_1(A) \) which is commonly called the ``Weinstein groupoid'' or occasionally the ``Ševera-Weinstein'' groupoid.
It turns out that a Lie algebroid admits an integration if and only if \( \Pi_1(A) \) is a smooth manifold.
Furthermore, if \( \Pi_1(A) \) is smooth it is the ``universal'' source simply-connected integration.

The earliest version of this fundamental groupoid construction is by Cattaneo and Felder\cite{cattaneo_poisson_2001} where they describe how to construct a symplectic groupoid integrating a Poisson manifold via Hamiltonian reduction on the space of cotangent paths. 
In 2000, a few months after the article of Cattaneo and Felder appeared on the arXiv, Pavol Ševera gave a talk where he proposed a version of this construction where the Hamiltonian action is reinterpreted as homotopies in the category of algebroids~\cite{severa_title_2005}. An advantage of this perspective was that it makes it clear that Cattaneo and Felder’s approach could be easily generalized to work for any Lie algebroid.
Marius Crainic and Rui Loja Fernandes~\cite{crainic_integrability_2003} were able to study the geometry of the space of algebroid paths in detail and consequently find a precise criteria the smoothness of the Ševera-Weinstein groupoid.
Crainic and Fernandes credit Alan Weinstein for suggesting a path-based approach to them and so they introduced the term ``Weinstein groupoid’'.

Even when the Ševera-Weinstein groupoid is not smooth, it is clear that it is far from being an arbitrary topological groupoid. For example, in the work of Hsian-Hua Tseng and Chenchang Zhu~\cite{tseng_integrating_2006} it is observed that it is the topological coarse moduli-space of a groupoid object in \'etale geometric stacks over manifolds. In other words, it is the orbit space of an \'etale Lie groupoid equipped with a ``stacky'' product.

\subsection{Main question}
Our aim in this article will be to provide a foundation for a version of Lie theory that includes the kinds of groupoids associated to non-integrable algebroids. 
We intend to do this in a way that will permit us to extend notions of Morita equivalence, symplectic groupoids and differentiation to this larger context while preserving Lie's second theorem.

Therefore, we aim to answer the following question:
\begin{question}
Does there exist a category \( \BC \) with the following properties: 
\begin{itemize}
    \item The category of smooth manifolds is a full subcategory of \( \BC \).
    \item If \( \SingLieGrpd \) is the category of groupoid objects \( \CG \grpd M \) in \( \BC \) where \( M \) is a smooth manifold, there exists a functor:
    \[ \hat \Lie \colon \SingLieGrpd \to \LieAlg  \]
    \item \( \LieGrpd \) is a full subcategory of   \( \SingLieGrpd \) and we have that: 
    \[ \hat \Lie|_{\LieGrpd} = \Lie  \]
    \item There is a notion of ``simply connected object'' in \( \BC \) which corresponds to being simply connected on the subcategory of manifolds. And, using this notion of simply connectedness, Lie's second and third theorems hold for \( \hat \Lie \).
\end{itemize}
\end{question}
Answering this question is not so straightforward. 
If we take \( \BC \) to be the (2,1)-category of étale geometric stacks we get all but the last bullet point~\cite{tseng_integrating_2005}\cite{zhu_lie_2008}. 
Another natural choice could be to take \( \BC \) to be the category of diffeological spaces. However, this category is so large that there appears to be little hope of defining a Lie functor in this context.

Another complication with answering this question is that the notion of groupoid object referenced in the statement of the question is somewhat subtle. 
For example, a Lie groupoid is not just a groupoid object in manifolds. It is a groupoid object in manifolds where the source (or equivalently target) map is a submersion. 
Therefore, when proposing such a category \( \BC\) we also need to propose a notion of ``submersion'' to go along with it.

The problem, as stated above, is a sort of ``Goldilocks problem.'' 
If we choose the category \( \BC \) to be too large, we have little hope of defining the Lie functor. 
If we choose \( \BC \) to be to small, it may be the case that not every Lie algebroid can be integrated by a groupoid object in \( \BC \).

\subsection{Our solution}

In this article, we will give a partial answer to this question. 
We introduce the category of ``quasi-étale diffeological spaces'' (\( \QUED \) for short) and submit that it is a solution to the problem stated above.
The category \( \QUED \) is, in many ways, very natural and generalizes existing notions such as orbifolds, quasifolds and similar types of diffeological structures.

We will show that the category \( \QUED \) indeed satisfies the first three bullet points of our question above. In order to do this we will also need to explain what a ``submersion'' is in this context, and we will construct the Lie functor \( \Lie \colon \SingLieGrpd \to \LieAlg \).

The main results of our paper can be summarized in the following two theorems:
\begin{theorem}\label{theorem:main.lie.functor}
Let \( \SingLieGrpd \) be the category of \( \QUED\)-groupoids where the space of objects is a smooth manifold. There exists a functor:
\[ \hat \Lie \colon \SingLieGrpd \to \LieAlg  \]
with the property that \( \hat \Lie |_{\LieGrpd} = \Lie \).
\end{theorem}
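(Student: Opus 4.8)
The plan is to mimic the classical construction of the Lie functor from Lie groupoids, but carefully check at each step that the quasi-étale structure provides exactly the data needed. The essential point is that a $\QUED$-groupoid $\CG \grpd M$ with $M$ a manifold has a source map $\s$ which is a ``submersion'' in the appropriate sense, so that the source-fibers $\s^{-1}(x)$ are, at least near the units, honest manifolds (or carry enough smooth structure that their tangent spaces at the unit make sense). First I would define, for each $x \in M$, the candidate fiber $A_x := T_{u(x)}\s^{-1}(x)$, where $u \colon M \to \CG$ is the unit embedding; the anchor $\rho \colon A \to TM$ is then $\dif{}\t$ restricted to this subspace, exactly as in the classical case. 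The first technical step is to show that $A := \bigsqcup_{x} A_x$ assembles into a smooth vector bundle over $M$; this should follow from the local model for quasi-étale submersions established earlier in the paper, which presumably says that near a unit a $\QUED$-submersion looks like a projection from (an open subset of) $M \times (\text{quasi-étale model fiber})$, so that $A$ is locally trivial.

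The second step is to construct the bracket. Here I would use the standard identification of sections of $A$ with \emph{right-invariant vector fields} on $\CG$: a section $\alpha$ of $A$ determines, by right translation, a vector field $\overrightarrow{\alpha}$ tangent to the $\s$-fibers, and the Lie bracket of two such right-invariant vector fields is again right-invariant, hence defines $[\alpha,\beta]$. The subtlety is that $\CG$ is only a diffeological (quasi-étale) space, so ``vector field'' and ``Lie bracket of vector fields'' must be interpreted via the quasi-étale structure — this is where I expect the paper has already set up a theory of tangent bundles and vector fields for $\QUED$ spaces, and I would invoke it. One must check: (i) right translation $R_g$ acts on the relevant tangent/vector-field objects; (ii) the bracket of right-invariant fields is right-invariant; (iii) the Leibniz identity $[\alpha, f\beta] = f[\alpha,\beta] + (\rho(\alpha)f)\beta$ holds, which is a formal consequence of the Leibniz rule for vector fields once $\rho$ is identified with the restriction of $\dif{}\t$.

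The third step is functoriality: a morphism $\Phi \colon \CG \to \CH$ of $\QUED$-groupoids covering $\phi \colon M \to N$ restricts to a map of source-fibers sending units to units, hence differentiates to a bundle map $\hat\Lie(\Phi) \colon A_\CG \to A_\CH$ over $\phi$; compatibility with anchors is immediate from the chain rule, and compatibility with brackets follows because $\Phi$ intertwines right translations, hence sends right-invariant vector fields to $\Phi$-related right-invariant vector fields, and related vector fields have related brackets. Finally, the compatibility $\hat\Lie|_{\LieGrpd} = \Lie$ should be essentially tautological: when $\CG$ is an ordinary Lie groupoid, the quasi-étale tangent/vector-field machinery must reduce to the ordinary one (this is presumably part of how $\Man$, or $\LieGrpd$, embeds as a full subcategory), so every construction above literally becomes the classical one.

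The main obstacle, I expect, is \textbf{Step 2}, specifically making sense of and controlling the Lie bracket of vector fields on a quasi-étale diffeological space and verifying that the subspace of right-invariant fields is closed under it. On a general diffeological space there is no canonical tangent bundle or bracket; the whole point of restricting to $\QUED$ is presumably that such a calculus exists there, but one still has to check that source-fiber-tangent, right-invariant vector fields form a well-behaved Lie subalgebra whose bracket descends to sections of the finite-rank bundle $A$ — in particular that no ``singular'' phenomena obstruct the pointwise identification $\Gamma(A) \cong \{\text{right-invariant vector fields}\}$. I would expect this to rest on a local-normal-form result for $\QUED$-submersions proven earlier, reducing the bracket computation to a model case where it is a routine verification.
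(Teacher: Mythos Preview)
Your approach --- build $A_x = T_{u(x)} \s^{-1}(x)$ directly and define the bracket via right-invariant vector fields on $\CG$ --- is the classical one, but it is \emph{not} what the paper does, and the machinery you expect to invoke (tangent bundles, vector fields, and their Lie brackets on general $\QUED$ spaces) is never developed there. You correctly flag Step~2 as the obstacle, but your anticipated resolution (a local-normal-form theorem making the bracket computation routine on a model $\QUED$ space) does not appear; the paper sidesteps the need for any calculus on $\CG$ itself.

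The paper's strategy is to \emph{desingularize}. Theorems~\ref{theorem:existence.and.uniqueness.of.LGC} and~\ref{theorem:existence.and.uniqueness.of.lifts} show that every (local) singular Lie groupoid $\CG \grpd M$ admits a quasi-\'etale chart $\pi \colon \til\CG \to \CG$ with $\til\CG$ an honest local \emph{Lie} groupoid (in particular a manifold), that this chart is unique up to unique isomorphism, and that morphisms of singular Lie groupoids lift uniquely to morphisms between such charts. One then sets $\hat\Lie(\CG) := \Lie(\til\CG)$ using the classical Lie functor on local Lie groupoids, invoking the equivalence $\LocLieGrpd \simeq \LieAlg$ (Theorem~\ref{theorem.local.lie.groupoids.vs.algebroids}) to get functoriality and uniqueness up to natural isomorphism; for an ordinary Lie groupoid one takes $\til\CG = \CG$, so $\hat\Lie|_{\LieGrpd} = \Lie$ is tautological. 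All differential geometry thus happens on manifolds; the $\QUED$ axioms --- especially the rigidity condition (QE3) --- enter only to establish existence and uniqueness of the local Lie groupoid lift (Section~\ref{section:proof.of.main.theorem}). Your direct approach might in principle be carried out, but it would require building a Cartan calculus on $\QUED$ spaces from scratch, precisely the project the paper's method is designed to avoid.
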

\begin{theorem}\label{theorem:main.wein.groupoid}
    Given a Lie algebroid, \( A \to M \), let \( \Pi_1(A) \grpd M  \) be the Ševera-Weinstein groupid of \( A \) and consider it as a diffeological groupoid. Then \( \Pi_1(A) \) is an element of \( \SingLieGrpd \) and \( \hat\Lie(\Pi_1(A)) \) is canonically isomorphic to \( A \).
\end{theorem}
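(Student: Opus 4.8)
The plan is to prove Theorem~\ref{theorem:main.wein.groupoid} in two halves: first establish that $\Pi_1(A)$, with its natural diffeology, is an object of $\SingLieGrpd$, and then compute its image under $\hat\Lie$. For the first half, recall that $\Pi_1(A)$ is the quotient of the space $P(A)$ of $A$-paths by the equivalence relation of $A$-homotopy; the space of $A$-paths carries a natural diffeology as a subset of the mapping space $\Cinf([0,1], A)$, and the quotient diffeology descends to $\Pi_1(A)$. I would first verify that this quotient is quasi-\'etale, which is the crux of this half: one needs charts exhibiting $\Pi_1(A)$ locally as a quotient of a manifold by (the germ of) an \'etale equivalence relation. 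The natural candidate comes from the work of Crainic--Fernandes: near any $A$-path one can use a transversal in $P(A)$ together with the reparametrization/homotopy action to produce a local model, and the isotropy of the $A$-homotopy foliation provides the \'etale-type identifications. One then checks that the source map $\s\colon \Pi_1(A) \to M$ is a ``submersion'' in the sense defined earlier in the paper — this should follow from the fact that at the level of $A$-paths the evaluation-at-endpoint maps are submersions of diffeological spaces onto $M$, and that the defining property of a $\QUED$-submersion is stable under the relevant quotients. Finally, the groupoid structure maps (concatenation, inversion, units) are smooth because they are induced by manifestly smooth operations on $A$-paths, so $\Pi_1(A)\grpd M$ is a $\QUED$-groupoid over a manifold, i.e.\ lies in $\SingLieGrpd$.

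For the second half, I would unwind the construction of $\hat\Lie$ from Theorem~\ref{theorem:main.lie.functor} applied to $\CG = \Pi_1(A)$. By analogy with the classical case, $\hat\Lie(\CG)$ should have underlying vector bundle the ``normal space'' to the units inside the source-fibers, with anchor induced by the target map and bracket coming from right-invariant vector fields (suitably interpreted in the quasi-\'etale setting). The key computation is to produce a canonical vector bundle map $A \to \hat\Lie(\Pi_1(A))$ and show it is an isomorphism compatible with anchors and brackets. The natural map sends $a \in A_x$ to the class of the $A$-path $t \mapsto ta$ (a ``short'' $A$-path based at $x$), differentiated at the unit; this is exactly the exponential-type construction that identifies $A$ with the infinitesimal data of $\Pi_1(A)$ in the Crainic--Fernandes picture. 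One checks it is a bundle isomorphism by exhibiting a local inverse using the same transversal charts from the first half, then verifies it intertwines the anchors (both reduce to the anchor $\rho$ of $A$ via endpoint evaluation) and the brackets (the bracket on $\hat\Lie(\Pi_1(A))$ is computed from concatenation of $A$-paths, which by the defining properties of $A$-homotopy reproduces the Lie algebroid bracket on sections — this is where one invokes that $\Pi_1(A)$'s homotopies are precisely $A$-homotopies).

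The main obstacle I anticipate is the first half: proving that the quotient diffeology on $\Pi_1(A)$ is genuinely quasi-\'etale, and that $\s$ is a submersion in the paper's precise sense. The difficulty is that $A$-homotopy is not a free or proper equivalence relation in general — the isotropy (the fundamental groups of the orbits) can be arbitrary, and the monodromy groups measuring non-integrability are exactly the obstruction to $\Pi_1(A)$ being a manifold. So one must show that these pathologies are still ``quasi-\'etale'': locally the quotient looks like $U/\Gamma$ where $\Gamma$ acts by diffeomorphism germs but perhaps not discretely or freely. I expect this to require the technical heart of the paper's framework — presumably a result (proven earlier, or reducible to earlier results) that the holonomy of a Lie algebroid foliation always yields quasi-\'etale quotients — combined with the Crainic--Fernandes slice theorem for the $A$-path space. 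Verifying naturality (functoriality of the isomorphism $A \cong \hat\Lie(\Pi_1(A))$ in $A$) should then be routine, following from the functoriality of both $\Pi_1$ and $\hat\Lie$ and the naturality of the exponential map.
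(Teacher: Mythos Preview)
Your outline for the second half reflects a misunderstanding of how $\hat\Lie$ is defined in this paper, and this is a genuine gap. The functor $\hat\Lie$ is \emph{not} built from a normal bundle along the units plus a bracket on right-invariant vector fields; it is defined by choosing a wide local Lie groupoid chart $\pi\colon\til\CG\to\CG$ and setting $\hat\Lie(\CG):=\Lie(\til\CG)$ (Theorem~\ref{theorem:main.lie.functor.fancy}). Consequently, once you know that the Crainic--Fernandes exponential $\pi\colon A^\circ\to\Pi_1(A)$ is a local groupoid chart, the computation of $\hat\Lie(\Pi_1(A))$ is one line: it is $\Lie(A^\circ)$, which is canonically $A$. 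Your proposed route --- constructing an explicit bundle map $A\to\hat\Lie(\Pi_1(A))$ via short $A$-paths and then verifying compatibility with anchors and brackets --- is not only much longer but would first require you to prove that such a tangent-space description even agrees with the paper's definition, which is nowhere established.

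For the first half, you are headed in the right direction but are missing the structural shortcut. The paper does not verify the quasi-\'etale axioms for $\Pi_1(A)$ directly from transversal charts; instead it proves a general criterion (Theorem~\ref{theorem:classify.qeds.groupoids}): if a diffeological groupoid $\CG\grpd M$ has a local subduction source map and admits a local Lie groupoid $\til\CG$ mapping onto it by a local-subduction homomorphism with totally disconnected fibers over the \emph{units}, then $\CG$ is automatically a singular Lie groupoid and the map is a local groupoid chart. The (QE3) rigidity is handled once and for all inside that theorem by a translation argument (divide $f(g)$ by $g$ and use that the result lands in a totally disconnected kernel). Applying this to $\pi\colon A^\circ\to\Pi_1(A)$ then reduces the entire first half to three ingredients already supplied by Crainic--Fernandes: $\pi$ is a local subduction (transversality to the $A$-homotopy foliation), the source map on $\Pi_1(A)$ is a local subduction (endpoint evaluation on the Banach path space is a submersion), and the fibers of $\pi$ over units are the monodromy groups, which are countable. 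Your worry about ``$U/\Gamma$ with $\Gamma$ not discrete or free'' is thus sidestepped: you never need a group model for the fibers, only that they are totally disconnected over units.
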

What does not appear in this article is a proof of Lie's second theorem and the notion of ``simply connected'' for \( \QUED \).
There is a diffeological version of simply-connected due to Patrick Iglesias-Zemmour~\cite{iglesias-zemmour_fibrations_1985}. Indeed, it is not too difficult to show that the source fibers of the Ševera-Weinstein groupoid are diffeologically simply-connected. However, a full proof of Lie's second theorem will require additional technical development which we intend to address in a later article.
\subsection{Methods and outline}
The main technical tools used in this article are the theory of diffeological spaces and structures as well as the the theory of local groupoids. The bulk of the article is dedicated to developing the technology needed to show that the Lie functor is well-defined. However, the basic idea behind our definition of the Lie functor for \( \SingLieGrpd\) is not fundamentally so complex: If \( \CG \grpd M \) is an element of \( \SingLieGrpd \) and \( \pi \colon \til \CG \to \CG \) is a ``local covering'' from a \emph{local Lie} groupoid \( \til \CG \) to \( \CG \) then we define the Lie algebroid of \( \CG \) to be the Lie algebroid of \( \til \CG \). In order for this definition to make sense, we will have to prove that every element of \( \SingLieGrpd \) is locally the quotient of a local Lie groupoid and this is the main technical difficulty.

In Section~\ref{section:diffeology}, we will review the basics of diffeology and diffeological spaces. We primarily do this to establish some notational conventions and to keep this article mostly self-contained. All of the material in this section is fairly standard in the field of diffeology and can be found in, for instance, the book of Patrick Iglesias-Zemmour~\cite{iglesias-zemmour_diffeology_2013}.

In Section~\ref{section:quasi-étale.diffeological.spaces}, we will introduce the notion of a quasi-étale diffeological space. Briefly, a quasi-étale diffeological space is a diffeological space which is locally the quotient of a smooth manifold by a ``nice'' equivalence relation. Nice in this context means that the equivalence classes are totally disconnected (e.g. think \( \QQ \subset \RR \) ) and the equivalence relation is ``rigid'' in the sense than any endomorphism of the equivalence relation must be an isomorphism.
After defining quasi-étale spaces we will explore a few properties of quasi-étale maps and show how it is possible to study maps between quasi-étale spaces in terms of maps between smooth manifolds by ``representing'' them on local charts. The remaining part of the section is dedicated to defining what a submersion in \( \QUED \) is and proving some key technical properties that we will need later.

In Section~\ref{section:quasi.étale.groupoids} we will define what a groupoid object in \( \QUED \) is as well as state the definition of a \emph{local} groupoid which is key to our differentiation procedure.

Section~\ref{section:differentiation} defines the Lie functor and states the main technical theorems that we need in order to prove that it is well-defined. It concludes with a classification of source connected singular Lie groupoid with integrable algebroids.

Section~\ref{section:proof.of.main.theorem} is dedicated to proving the main results that we rely on in Section~\ref{section:differentiation}. It is the most technical part of the article. The main tools used here are a combination of diffeology and the theory of local Lie groupoids. The main point of this section is to show that every element of \( \SingLieGrpd\) is locally the quotient of a local Lie groupoid in a ``unique'' way. With these proofs we finish the proof of Theorem~\ref{theorem:main.lie.functor}.

Finally, in Section~\ref{section:Weinstein.groupoid} we conclude by proving that the classical Ševera-Weinstein groupoid is an element of \( \SingLieGrpd \) and we explain how to apply the Lie functor to it. With this calculation we complete the proof of Theorem~\ref{theorem:main.wein.groupoid}.

\subsection{Related work}
The most closely related work to the subject of this article is perhaps that of Chenchang Zhu et al.(\cite{tseng_integrating_2006}\cite{zhu_lie_2008}\cite{bursztyn_principal_2020}). In Tseng and Zhu~\cite{tseng_integrating_2006} they describe the geometric stack whose orbit space is the Ševera-Weinstein groupoid. They also describe a differentiation procedure for certain groupoid objects in geometric stacks.

However, there are two main disadvantages to this approach: One is that it necessitates going from the category of Lie groupoids to a (2,1)-category of stacky groupoids. 
The inherently higher categorical nature comes with a variety of coherence conditions and other phenomena that can make working with such objects a bit cumbersome. 
The second problem, is with Lie's second theorem. Although stacky groupoids can be used to repair the loss of Lie's third theorem, this comes at the cost of further complicating Lie's second theorem. Unlike for ordinary Lie groupoids, for stacky groupoids, it is not the case that source simply connected groupoids operate as ``universal'' integrations. 
The version of Lie's second theorem that is true for stacky groupoids can be seen in \cite{zhu_lie_2008}. The stacky groupoids version replaces the ``source simply connected'' condition with the much stronger condition of ``source 2-connected''. Indeed, the simply-connected version of Lie's second theorem for stacky groupoids appears to be false.

Several other authors have also investigated the relationship between diffeology and Lie theory. For example,
Gilbert Hector and Enrique Macías-Virgós\cite{hector_diffeological_2002} wrote an article discussing diffeological groups with a particular emphasis on diffeomorphism groups. They use a diffeological version of the tangent functor to define a kind of Lie algebra that one can associate to a diffeological group. However, it is not clear from their work whether the resulting structure is indeed a Lie algebra in the classical sense. They do, however, show that this procedure recovers the expected Lie algebra in the case of diffeomorphism groups.

Also on the topic of diffeological groups, there is an article by Jean-Marie Souriau~\cite{Souriau_groupes_2006} where he generalizes a variety of properties of Lie groups to the diffeological setting. Some of them relate to the integration and differentiation problem. For example, Souriau observes that under some separability assumptions, it is possible to construct something akin to the exponential map.

There is work by Marco Zambon and Iakovos Androulidakis~\cite{androulidakis_integration_2020} on the topic of integrating singular subalgebroids by diffeological groupoids. In their work they develop a differentiation procedure for diffeological groupoids that arise as a kind of ``singular subgroupoid'' of an ambient Lie groupoid. Their integration and differentiation procedure is defined relative to an ambient (integrable) structure algebroid and they do not treat the case of non-integrable algebroids.

There is also work by Christian Blohmann~\cite{blohmann_elastic_2023} towards developing a kind of Lie theory/Cartan calculus for elastic diffeological spaces. The motivation behind this is to study the infinite dimensional symmetries that occur in field theory. However, we remark that the Ševera-Weinstein groupoid of a non-integrable algebroid does not appear to be elastic in general so it is not clear if such a theory would be suitable for the study of non-integrable algebroids.

\section*{Acknowledgements}
The author would like to thank Marco Zambon for the numerous discussions on this topic over the years, as well as his suggestions for this manuscript. The author has also greatly benefited from discussions with Christian Blohmann during his stay at the Max Planck Institute. We also acknowledge Cattaneo Felder, Rui Loja Fernandes, Eckhard Meinrenken, David Miyamoto, and Jordan Watts for their corrections and/or comments on an earlier draft of this article. This article is based on work that was supported by the following sources: Fonds Wetenschappelijk Onderzoek (FWO Project G083118N); the Max Planck Institute for Mathematics in Bonn; the National Science Foundation (Award Number 2137999).

\section*{Notation}
The category of sets will be denoted \( \Set \) and the category of finite-dimensional, Hausdorff, second countable, smooth manifolds will be written \( \Man \).
\section{Diffeology}\label{section:diffeology}
Diffeological spaces are a generalization of the notion of a smooth manifold. 
They were independently introduced by Souriau\cite{souriau_groupes_1984} and Chen\cite{chen_iterated_1977}. 
Fundamentally, the idea is to endow a space with a `manifold-like' structure by specifying which maps into the space are smooth. 
The standard textbook for the theory is by Iglesias-Zemmour\cite{iglesias-zemmour_diffeology_2013} who is also responsible for fleshing out a considerable amount of the standard diffeological techniques.
\subsection{Diffeological structures}
The core observation behind diffeology is that the smooth structure on a manifold \( M \) is completely determined by the set of smooth maps into \( M \) where the domains are Euclidean sets. 
The definition of diffeology is obtained by axiomatizing some of the basic properties of this distinguished collection of maps.
\begin{definition}
An \emph{\(n\)-dimensional Euclidean set} is an open subset \( U \subset \RR^n \). 
Let \( \Euc \) denote the category whose objects are Euclidean sets and whose morphisms are smooth functions between Euclidean sets.
\end{definition}
\begin{definition}
A \emph{diffeological structure} on a set \( X \) is a function \( \CD_X \) which assigns to each object \( U \in \Euc \) a distinguished collection \( \CD_X(U) \subset \Set(U,X) \). 
An element of \( \CD_X(U) \) for some \( U \) is call a \emph{plot}. 
Plots are required to satisfy the following axioms:
\begin{enumerate}[(D1)]
\item If \( \phi \colon U_\phi \to X \) is constant then \( \phi \) is a plot. 
\item If \( \phi \colon U_\phi \to X \) is a plot and \( \psi \colon V \to U_\phi \) is a morphism in \( \Euc \) then \( \phi \circ \psi \) is a plot. 
\item If \( \phi \colon U_\phi \to X \) is a function and \( \{ U_i \}  \) is an open cover of \( U_\phi \) such that \( \phi|_{U_i} \) is a plot for all \( i \in I \) then \( \phi \) is a plot.
\end{enumerate}
A \emph{diffeological space} is a pair \( (X, \CD_X) \) where \( X \) is a set and \( \CD_X \) is a diffeological structure on \( X \). 
In a mild abuse of notation we will typically refer to diffeological spaces by only their underlying set. 
However, it is important to keep in mind that a given set may admit many diffeological structures. 
Finally, given a plot \( \phi \) on a diffeological space \( X \) we will typically denote the domain of \( \phi \) using a subscript \( U_\phi \). 
\end{definition}
Diffeological spaces are quite general and include objects which range from the ordinary to quite pathological. Let us go over a few basic examples.
\begin{example}\label{example:diffeology.manifold}
If \( M \) is a smooth manifold and \( U \) is a Euclidean set, then we declare \( \phi \colon U_\phi \to M \) to be a plot if and only if \( \phi \) is smooth as a map of manifolds. 
\end{example}
\begin{example}\label{example:diffeology.topologicalspace}
Suppose \( X \) is a topological space. 
We can make \( X \) into a diffeological space by declaring any continuous function \( \phi \colon U \to X \) to be a plot. 
\end{example}
\begin{example}\label{example:diffeology.discrete}
Suppose \( X \) is a set. 
The \emph{discrete diffeology} on \( X \) is the unique diffeology on \( X \) for which every plot is locally constant. 
\end{example}
\begin{example}\label{example.diffeology.set}
Suppose \( X \) is a set. 
The \emph{coarse diffeology} on \( X \) is the diffeology on \( X \) which declares every set theoretic function \( \phi \colon U \to X \) from a Euclidean set to be a plot. 
\end{example}
\begin{example}\label{example:diffeology.subset}
Suppose \( X \) is a diffeological space and \( \iota \colon  Y  \to   X \) is the inclusion of an arbitrary subset. 
The \emph{subset diffeology} on \( Y \) is the diffeology on \( Y \) which says that \( \phi \colon U \to Y \) is a plot if and only if \( \iota \circ \phi \) is a plot. 
\end{example}
One particular example of the subset diffeology will be of particular relevance to this article.
\begin{definition}\label{definition:totally.disconnected}
    Suppose \( X \) is a diffeological space and \( Y \subset X\). We say that \( Y \) is \emph{totally disconnected} if the subset diffeology on \( Y \) is the discrete diffeology. In other words, a map \( \phi \colon U_\phi \to Y \) is a plot on \( X \) if and only if it is locally constant.
\end{definition}
\begin{example}
    The set of rational numbers \( \QQ \subset \RR \) is a totally disconnected subset of \( \RR \).
\end{example}

Diffeological spaces come with a natural topological structure. However, the relationship between topological structures and diffeological structures is much weaker than the usual one between smooth structures and topology.
\begin{definition}\label{definition:D.topology}
    Suppose \( X \) is a diffeological space. A subset \( V \subset X \) is said to be \emph{open} if for all plots \( \phi \colon U_\phi \to X \) we have that the inverse image \( \phi\inv(V) \subset U_\phi \) is open.
    This topology on \( X \) is called the \emph{D-topology}.
\end{definition}
From now on, whenever we refer to something being ``local'' or ``open'' in a diffeological space, we mean relative to the D-topology.
\subsection{Smooth maps}\label{subsection:smooth.maps}
Morphisms of diffeological spaces are defined in a rather straightforward way. Basically, 
a function is smooth if it pushes forward plots to plots. More formally:
\begin{definition}\label{definition:smooth.map}
Let \( X \) and \(Y \) be diffeological spaces. 
A function \( f \colon X \to Y \) is a \emph{smooth map} if for all plots \( \phi \) on \( X \) we have that \( f \circ \phi \) is a plot on \( Y \). 
We say that \( f \) is a \emph{diffeomorphism} if it is a bijection and the inverse function \( f^{-1} \) is smooth.

The category of diffeological spaces with smooth maps will be denoted \( \Diffgl \). 
\end{definition}
This notion of a smooth map extends the usual one for smooth maps between manifolds. Let us consider some basic examples.
\begin{example}\label{example:smooth.map.manifold}
Suppose \( M \) and \( N \) are smooth manifolds. 
A function \( f \colon M \to N \) is smooth as a map of diffeological spaces if and only if it is smooth as a map of manifolds. 
This means that the category of smooth manifolds embeds fully faithfully into the category of diffeological spaces. 
\end{example}
\begin{example}\label{example:smooth.map.subset}
Suppose \( X \) is a diffeological space and \( Y \) is a subset of \( X \) equipped with the subset diffeology. 
The inclusion map \( \iota \colon Y \to X \) is smooth. 
\end{example}
There are a few different notions of ``quotients'' in the context of diffeology. The most fundamental one is called a subduction. One can think of subduction as the diffeological version of a topological quotient.
\begin{definition}
Suppose \( X \) and \( Y \) are diffeological spaces. 
A smooth function \( f \) is called a \emph{subduction} 
if for all plots \( \phi \colon U_\phi \to Y \) and points \( u \in U \) 
there exists an open \( V \subset U_\phi \) of \( u \in U \) together with a plot \( \til \phi \colon V \to X \) 
such that \( \phi |_{V} = f \circ \til \phi \). 
\[
\begin{tikzcd}
 & X \arrow[d, "f"] \\
(U_\phi, u) \arrow[r, "\phi"]  & Y    
\end{tikzcd}
\quad \Rightarrow \quad
\begin{tikzcd}
 \exists (V, u) \arrow[d, "\iota", hook, dashed] \arrow[r, " \exists\til \phi", dashed] & X \arrow[d, "f"] \\
(U_\phi, u) \arrow[r, "\phi"]  & Y
\end{tikzcd}
\]
\end{definition}
Let us state a few examples.
\begin{example}\label{example:subduction.manifold}
If \( f \colon M \to N \) is a smooth map of manifolds, then \( f \) is a subduction if and only if for all \( p \in N \) there exists an element \( q \in M\) such that the differential \( T_q f \colon T_q M \to T_p N \) is a surjection.
\end{example}
\begin{example}\label{example:subduction.nebula}
Suppose \( X \) is a diffeological space and let \( \{ \phi_i \colon U_i \to X  \}_{i\in I}\) be the set of all plots on \( X \). 
Consider the function:
\[ \bigsqcup_{i\in I} \phi_i \colon  \bigsqcup_{i\in I} U_i \to X \]
This function is a subduction since every plot on \( X \) factors through \( \bigsqcup\limits_{i\in I} \phi_i\) in an canonical way. 
\end{example}
In some cases, one requires maps with a greater degree of regularity than a subduction. 
For example, one might wish to generalize the notion of a submersion of manifolds to the diffeological setting. We saw above that a subduction between manifolds is not quite the same thing as a submersion.
This leads us to the notion of a local subduction. 
\begin{definition}\label{definition:local.subduction}
We say that \( f \) is a \emph{local subduction} 
if for all plots \( \phi \colon U_\phi \to Y \) and points \( x \in X \), \( u \in U_\phi \) such that \( \phi(u) = f(x) \), 
there exists an open neighborhood \( V \subset U_\phi \) of \( u \in U_\phi \) and a plot \( \til \phi \colon U_\phi \to X \) 
such that \( \til \phi(u) = x \) and \( \phi|_V = f \circ \phi \).
\[
\begin{tikzcd}
 & (X,x) \arrow[d, "f"] \\
(U_\phi, u) \arrow[r, "\phi"]  & (Y, \phi(u) )
\end{tikzcd}
\quad \Rightarrow \quad
\begin{tikzcd}
\exists (V, u) \arrow[d, "\iota", hook, dashed] \arrow[r, " \exists\til \phi", dashed] & (X,x) \arrow[d, "f"] \\
(U_\phi, u) \arrow[r, "\phi"]  & (Y, \phi(u) )
\end{tikzcd}
\]
\end{definition}
The definitions of local subduction and subduction look similar. The main distinction, for subductions, is that the lift of \( \phi \) is not required to factor through any \emph{specific} point in \( X \). 
On the other hand, for a local subduction, one must be able to find a lift through every point in \( X \) that is in the preimage of \( \phi(u) \).
This has two main consequences: One is that being a local subduction is a property that is local in the \emph{domain} of \( f \). 
The other consequence is that, unlike subductions, local subductions may not necessarily be surjective. 
\begin{example}\label{example:local.subduction.manifolds}
If \( f \colon M \to N \) is a morphism of smooth manifolds then \( f \) is a local subduction if and only if \( f \) is a (not necessarily surjective) submersion. 
\end{example}
\begin{example}\label{example:local.subduction.counter.example}
Consider \( f \colon \RR \sqcup \RR \to \RR \) where \( f(x) = 0 \) on the first connected component and \( f(x) = x \) on the second connected component. 
Then \( f \) is a subduction but \( f \) is not a local subduction. 
\end{example}
Our next example/lemma is one of particular interest to us. Arguments similar to the one below will be used frequently in this article.
\begin{lemma}\label{lemma:lie.groupoid.quotient.is.subduction}
Suppose \( \CG \grpd M \) is a Lie groupoid and let \( X = M / \CG \) be the set of orbits. Then quotient map \( \pi \colon M \to X \) is a local subduction.
\end{lemma}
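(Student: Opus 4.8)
The plan is to unwind the definition of local subduction directly. We must show: given any plot $\phi \colon U_\phi \to X$ into the orbit space, any point $p \in M$, and any $u \in U_\phi$ with $\pi(p) = \phi(u)$, there is an open neighborhood $V \subset U_\phi$ of $u$ and a plot $\til\phi \colon U_\phi \to M$ (in fact we only need it defined on $V$, by the locality axiom (D3)) with $\til\phi(u) = p$ and $\phi|_V = \pi \circ \til\phi$. The first step is to understand plots into $X = M/\CG$. By a standard fact about quotient diffeologies, $\phi$ is a plot into $X$ precisely when it locally lifts through $\pi$ in the weak sense: for each $u' \in U_\phi$ there is a neighborhood on which $\phi$ factors as $\pi \circ \sigma$ for some smooth $\sigma$ into $M$; this is because $\pi$ is automatically a subduction onto $X$ with the quotient diffeology, and subductions satisfy this property by Example~\ref{example:subduction.nebula}-type reasoning. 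So near $u$ we get a smooth $\sigma \colon V_0 \to M$ with $\pi \circ \sigma = \phi|_{V_0}$, but a priori $\sigma(u)$ is merely some point in the same $\CG$-orbit as $p$, not $p$ itself.

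The second and main step is to correct $\sigma$ so that it hits $p$ on the nose. Choose an arrow $g \in \CG$ with source $s(g) = \sigma(u)$ and target $t(g) = p$; such $g$ exists because $\sigma(u)$ and $p$ lie in the same orbit. Now I would use the fact that the source map $s \colon \CG \to M$ is a submersion: by the local submersion/implicit function theorem, there is a neighborhood $W$ of $\sigma(u)$ in $M$ and a smooth section $\tau \colon W \to \CG$ of $s$ with $\tau(\sigma(u)) = g$ — i.e., a smooth "bisection-like" local section through $g$. Shrinking $V_0$ to a neighborhood $V$ of $u$ with $\sigma(V) \subset W$, define $\til\phi := t \circ \tau \circ \sigma \colon V \to M$. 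This is smooth (composition of smooth maps), it satisfies $\til\phi(u) = t(\tau(\sigma(u))) = t(g) = p$, and since $t(\tau(x))$ lies in the same orbit as $s(\tau(x)) = x$ for all $x \in W$, we get $\pi(\til\phi(x)) = \pi(\sigma(x)) = \phi(x)$ for $x \in V$, i.e. $\phi|_V = \pi \circ \til\phi$. Finally, extend $\til\phi$ arbitrarily to a plot on all of $U_\phi$ if desired (or invoke that the definition only needs a lift on $V$); in any case $\til\phi$ restricted to $V$ is smooth into the manifold $M$, hence a plot by Example~\ref{example:diffeology.manifold}, and a plot into $M$ is also admissible as the $X$-side only constrains it via $\pi$.

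The step I expect to be the main (though modest) obstacle is the first one: pinning down precisely why a plot into $M/\CG$ admits local smooth lifts through $\pi$. This requires either citing the universal property of the quotient diffeology — that $\CD_X(U)$ consists exactly of maps that locally factor through $\pi$ composed with plots of $M$ — or reproving it from the axioms (D1)–(D3) together with Example~\ref{example:subduction.nebula}. Once that is in hand, the rest is a routine application of the submersion property of $s$ and the groupoid structure maps, exactly the kind of argument the paper flags as "used frequently." A minor point worth stating carefully is that $\pi$ is indeed smooth (so that "local subduction" is being claimed of an actual morphism): $\pi$ pushes the smooth charts of $M$ to plots of $X$ by construction of the quotient diffeology, so this is immediate.
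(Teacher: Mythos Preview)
Your proof is correct and follows essentially the same approach as the paper's: first use that $\pi$ is a subduction to obtain \emph{some} local lift into $M$, then correct it using an arrow $g$ connecting the lift's value to $p$ together with a local section of the source submersion through $g$, and finally compose with the target map. The only cosmetic difference is notation (your $\sigma,\tau$ play the roles of the paper's $\til\phi,\sigma$), and you spell out more carefully why plots into the quotient lift locally, which the paper simply cites as ``$\pi$ is a subduction.''
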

\begin{proof}
    Suppose \( \phi \colon U_\phi \to X \) is a plot and let \( u \in U_\phi \) and \( p \in M \) be fixed such that \( \phi(u) = \pi(p) \).

    Since \( \pi \colon M \to X \) is a subduction, we know there exists an open neighborhood \( V \subset U_\phi \) of \( U \) and a smooth function \( \til \phi \colon V \to M \) with the property that \( \pi \circ \til \phi = \phi \).

    Let \( q := \phi(u) \in M \). Since \( p\) and \( q \) lie in the same \( \pi \) fiber, there exists a groupoid element \( g \in \CG \) with the property that \( s(g) = q\) and \( t(g) = p\).
    Let \( \sigma \colon \CO \to \CG \) be a local section of the source map defined in a neighborhood of \( \CO \subset M\) of \( q \) such that \( \sigma(q) = g \).

    Now let:
    \[ \overline{\phi}(v) := (t \circ \sigma \circ \til \phi)(v) \]
    where \( t \) is the target map for the groupoid. 
    If necessary, we shrink the domain \( V \) of \( \overline{\phi} \) to a smaller open neighborhood of \( u \) to ensure that \( \overline{\phi}\) is well-defined.

    Then a direct computation shows that \( \pi \circ \overline{\phi} = \phi \) and also \( \overline{\phi}(u) = p\). We conclude that \( \pi \) is a local subduction.
    \end{proof}
\subsection{Standard constructions}\label{subsection:standard.constructions}
The flexibility of diffeological spaces means that we have many tools for constructing new diffeological spaces out of old ones. 
Let us briefly review a few of the standard diffeological constructions that we will require. 
\begin{definition}\label{definition:intersection.diffeology}
Suppose \( X \) is a set and \( \{ \BD_i \}_{i \in I} \) is an arbitrary collection of diffeological structures on \( X \). 
The \emph{intersection diffeology} relative to this collection is the diffeology on \( X \) which declares a function \( \phi \colon U \to X \) to be a plot if and only if \( \phi \) is a plot in \( \BD_i \) for all \( i \in I \).

Suppose \( X \) is a set and we have an arbitrary collection \( C = \{ \phi \colon U_\phi \to X \} \) of set theoretic functions where the domains of the elements of \( C \) are Euclidean spaces. 
The \emph{diffeology generated by \(C \)} is the diffeology obtained by taking the intersection of all diffeologies which for which every element in \( C \) is a plot.
\end{definition}
Since the constant plots are contained in every possible diffeology, the intersection diffeology always exists. 
Since every diffeology contains the discrete diffeology, the intersection of an arbitrary collection of diffeologies is never empty. 
Many diffeologies can be constructed by taking intersections. 
Among these, the quotient diffeology is of particular importance. 
\begin{definition}\label{definition:quotient.diffeology}
Suppose \( X \) is a diffeological space and \( \sim \) is an equivalence relation on \( X \). 
Let \( X/ \sim \) be the set of equivalence classes and let \( \pi \colon X \to X/\sim \) be the canonical surjective function. 
The \emph{quotient diffeology} on \( X/\sim \) is the intersection of all diffeologies on \( Y \) which makes \( \pi \) a subduction.
\end{definition}
Surjective subductions and quotient diffeologies are in one-to-one correspondence. 
By this we mean that a surjective function \( f \colon X \to Y \) of diffeological spaces is a subduction if and only if the diffeology on \( Y \) is the quotient diffeology relative to the equivalence relation given by the fibers of \( f \). 

The category of diffeological spaces is fairly well-behaved. In particular it admits finite products and exponential objects.
\begin{definition}\label{definition:product.diffeology}
Given two diffeological spaces, \( X \) and \( Y \) the \emph{product diffeology} on \( X \times Y \) is constructed as follows: We say that \( \phi \colon U_\phi \to X \times Y \) is a plot if and only if \( \pr_1 \circ \phi \) and \( \pr_2 \circ \phi \) are plots. 
\end{definition}
\begin{definition}\label{definition:functional.diffeology}
    Given two diffeological spaces \( X \) and \( Y \) the \emph{functional diffeology} on \( C^\infty(X,Y) \) is constructed as follows: We say that \( \phi \colon U_\phi \to C^\infty(X,Y) \) is a plot if and only if the following map is smooth:
    \[ \overline{\phi} \colon U_\phi \times X \to Y \qquad (u,x) \mapsto \phi(u)(x) \]
    Note that we take the product diffeology on \( U_\phi \times X \).
\end{definition}
\section{Quasi-étale diffeological spaces}\label{section:quasi-étale.diffeological.spaces}
We will now introduce a new class of diffeological space that we will call ``quasi-étale diffeological spaces" or QUED for short. This subcategory of diffeological spaces, along with its associated groupoids, will be the main focus of the remainder of the article.
In short, a QUED is a diffeological space that can be expressed as the (local) quotient of a smooth (Hausdorff) manifold by a well-behaved equivalence relation. 
The term "quasi-étale" is used because this equivalence relation causes the quotient map to behave in a manner that is reminiscent of an étale map of manifolds. However, unlike a typical étale map, it is not a local diffeomorphism.
\subsection{Quasi-étale maps and spaces}
\begin{definition}[Quasi-étale]\label{defn:quasi-étale}
Suppose \( X \) and \( Y \) are diffeological spaces. A map \( \pi \colon X \to Y \) is said to be \emph{quasi-étale} if it satisfies the following properties:
\begin{enumerate}[(QE1)]
    \item \( \pi\) is a local subduction,
    \item the fibers of \( \pi \) are totally disconnected,
    \item if \( \CO \subset X \) is an open subset and we are given a smooth map \( f \colon \CO \to X \) with the property that \( \pi \circ f = \pi \) then \( f \) is a local diffeomorphism.
\end{enumerate}
Given \( x \in X\) a \emph{quasi-étale chart around \( x \)} consists of a quasi-étale map \( \pi \colon M \to X \) where \( M \) is a smooth manifold and such that \( x \) is in the image of \( \pi \).
We say that \( X \) is a \emph{quasi-étale diffeological space (QUED)} if for all \( x \in  X\) there exists a quasi-étale chart around \( x \). 

We write \( \QUED \) to denote the full subcategory of \( \Diffgl\) that consists of quasi-étale diffeolgical spaces.
\end{definition}
Since local subductions are open, every quasi-étale diffeological space is locally the quotient of a smooth manifold modulo an equivalence relation. In practice, it can be difficult to determine whether an equivalence relation on a manifold gives rise to a quasi-étale chart and typically the most difficult step to prove is (QE3).

However, there are a lot of interesting examples of Quasi-étale diffeological spaces. In particular, several kinds of diffeological spaces that appear in the literature happen to be quasi-étale.
\begin{example}\label{example:quasi-étale.manifolds}
Suppose \( N \) and \( M \) are smooth manifolds. 
A quasi-étale map \( \pi \colon M \to N \) is the same thing as an étale map.
Therefore a classical atlas on a manifold is also a quasi-étale atlas.
\end{example}
\begin{example}\label{example:quasi-étale.quasifolds}
A quasifold (introduced by Prato~\cite{prato_sur_1999}) is a diffeological space that is locally the quotient of Euclidean space modulo a countable group of affine transformations.

If \( \Gamma \) is a countable group of affine transformations of \( \RR^n \) then the quotient map \( \RR^n \to \RR^n/\Gamma \) is quasi-étale. It is a local subduction because the quotient map for a smooth group action is always a local subduction. The fibers are totally disconnected since \( \Gamma \) is countable.
The fact that condition (QE3) is satisfied in this context is actually rather non-trivial but it has already been proved by Karshon and Miyamoto~\cite{karshon_quasifold_2022} (Corollary 2.15).
\end{example}
\begin{example}\label{example:quasi-étale.orbifolds}
Since orbifolds are a special case of quasifolds, they are also quasi-étale diffeological spaces.
\end{example}
\begin{example}\label{example:quasi-étale.diffeological.étale.manifolds}
In the literature, the closest definition to the one we give above is probably the ``diffeological étale manifolds'' of Ahmadi~\cite{ahmadi_submersions_2023}. In the Ahmadi article, he defines a class of maps which he calls ``diffeological étale maps.'' 

We will not state the definition of such maps here but we will simply utilize some of the properties proved by Ahmadi to show the relationship with our notion: Ahmadi shows that if \( \pi \colon X \to Y \) is a diffeological étale map then it is a local subduction and it has the property that for all \( x \in X \): 
\[
    T^{int}_x \pi \colon T^{int}_x X \to T^{int}_{\pi(x)} Y   
\]
is a bijection (\cite{ahmadi_submersions_2023}, Corollary 5.6(i)). 
Here \( T^{int} \) denotes the ``internal tangent space'' (see \cite{hector_geometrie_1995} and \cite{christensen_tangent_2016}.) If \( X \) is a smooth manifold and we have an \( f \) as in (QE3) such that \( \pi \circ  f  = \pi \) we can apply the tangent functor to this equation to easily conclude that \( f \) must be a local diffeomorphism.
\end{example}
Our last example is of particular importance.
It says that quotients of Lie groups by totally disconnected subgroups are quasi-étale diffeological spaces.

The proof strategy for this lemma is essentially a simplified form of the main proof strategy we will use to show that the Ševera-Weinstein groupoid of a Lie algebroid is a quasi-étale diffeological space.
\begin{lemma}\label{lemma:homogeneous.space}
Suppose \( G \) is a Lie group and \( K \) is a totally disconnected normal subgroup of \( G \). Let \( X = G/K\) be the group of \( K \)-cosets equipped with the quotient diffeology. The quotient map \( \pi \colon G \to X \) is quasi-étale and so \( X \) is a quasi-étale diffeological space.
\end{lemma}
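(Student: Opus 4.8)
The plan is to check the three conditions (QE1)--(QE3) of Definition~\ref{defn:quasi-étale} for the quotient map $\pi\colon G\to X=G/K$ one at a time, using left and right translations to reduce everything to statements about $K$ itself; once these are in place, $\pi$ is a surjective quasi-étale map out of a manifold, hence a quasi-étale chart around every point of $X$, so that $X\in\QUED$. At the outset I would remark that normality of $K$ is needed only to make $X$ a group and plays no role in the quasi-étale statement, and that $K$ need not be closed in $G$.

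For (QE1) I would show $\pi$ is a local subduction as follows. The quotient diffeology makes $\pi$ a (surjective) subduction, so given a plot $\phi\colon U_\phi\to X$, a point $u\in U_\phi$, and $g\in G$ with $\pi(g)=\phi(u)$, there is an open neighbourhood $V$ of $u$ and a plot $\psi\colon V\to G$ with $\pi\circ\psi=\phi|_V$. Since $\psi(u)$ and $g$ lie in the same coset I can write $\psi(u)=gk$ with $k\in K$, and then replace $\psi$ by $\tilde\phi\colon v\mapsto\psi(v)k^{-1}$; this is still a plot because right translation is a diffeomorphism of $G$, it satisfies $\tilde\phi(u)=g$, and it still lifts $\phi|_V$ because $k^{-1}\in K$. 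For (QE2), the fibre of $\pi$ over $gK$ is the coset $gK=L_g(K)$; since $L_g$ is a diffeomorphism of $G$ carrying $K$ bijectively onto $gK$, it identifies their subset diffeologies, and as $K$ is totally disconnected (subset diffeology $=$ discrete diffeology) so is every fibre of $\pi$.

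The substantive step is (QE3). Given an open $\CO\subset G$ and a smooth $f\colon\CO\to G$ with $\pi\circ f=\pi$, I would observe that $f(g)$ lies in the coset $gK$ for every $g\in\CO$, so the map $\kappa\colon\CO\to G$, $\kappa(g)=g^{-1}f(g)$, is smooth (built from inversion, $f$, and multiplication) and takes values in $K$. By the subset-diffeology property it is then smooth as a map into $K$; since the subset diffeology on $K$ is discrete, $\kappa$ is locally constant. Hence near any point of $\CO$ there is a connected open set $W$ on which $\kappa\equiv k_0$ for a fixed $k_0\in K$, so that $f|_W$ is right translation by $k_0$, a diffeomorphism of $G$; therefore $f$ is a local diffeomorphism.

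I expect the only point needing an explicit word of justification is the claim used in (QE3) that a smooth map from an open subset of a manifold into a discrete diffeological space is locally constant --- which follows by precomposing with charts, since every plot of a discrete diffeology is locally constant. Everything else is a direct consequence of smoothness of the group operations of $G$, which is precisely why this argument is much softer than the proof that the Ševera--Weinstein groupoid is quasi-étale. Having verified (QE1)--(QE3) I would conclude that $\pi$ is quasi-étale, and being a surjection from a manifold it is a quasi-étale chart around every point, so $X$ is a quasi-étale diffeological space.
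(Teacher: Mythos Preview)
Your proof is correct and follows essentially the same route as the paper's: both verify (QE1)--(QE3) directly, using the map $g\mapsto g^{-1}f(g)$ into $K$ for the key step (QE3). The only cosmetic differences are that the paper invokes its earlier Lemma~\ref{lemma:lie.groupoid.quotient.is.subduction} for (QE1) rather than writing out the translation-by-$k^{-1}$ argument, and that the paper phrases (QE3) using the homomorphism property of $\pi$ (hence implicitly using normality) whereas you correctly observe that $g^{-1}f(g)\in K$ follows directly from $f(g)\in gK$ without any appeal to normality.
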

\begin{proof}
We need to show each of the three properties.
(QE1) Consider the action groupoid \( K \ltimes G \grpd G \). Clearly \( \pi \) is the quotient map for the orbit space of this Lie groupoid. By Lemma~\ref{lemma:lie.groupoid.quotient.is.subduction} we conclude \( \pi \) is a local subduction.

(QE2) The fibers of \( \pi \) are the \( K \) cosets of the form \( gK \) for \( g \in G \). Since left translation by \( g \) is a diffeomorphism and \( K \) is totally disconnected, it follows that \(g K \) must be totally disconnected. 

(QE3) Suppose \( \CO \subset G \) is a connected open set and \( f \colon \CO \to G \) is a smooth function such that \(  \pi \circ f = \pi \).

Consider the function:
\[ \overline{f} (g) := g\inv \cdot f(g)  \]
Since \( \pi \) is a homomorphism it is clear that the function \( \pi \circ \overline{f} \colon \CO \to X \) is constant and therefore the image of \( \overline f \) is contained in a single fiber of \( \pi \). Since the fibers of \( \pi \) are totally disconnected, we conclude that there exists \( g_0 \in G\) such that \( \overline{f}(g) = g_0 \) for all \( g \in G \). 

This implies that 
\[ f(g) = g \cdot g_0  \]
Since \( f \) is just right translation by \( g_0 \) we conclude that \( f \) is a local diffeomorphism.
\end{proof}
\subsection{Properties of Quasi-étale maps}
Quasi-étale diffeological spaces have a variety of favorable properties that makes it possible to transport many differential geometry techniques to the category \( \QUED \).
The way one does this is by ``representing'' maps between quasi-étale diffeological spaces using maps between manifolds.
This is analogous to how smooth maps between manifolds can be represented in terms of charts. 
\begin{definition}\label{definition:local.representation}
Suppose \( f \colon X \to Y \) is a smooth map in \( \QUED \). 
A \emph{local representation} of \( f \) consists of a pair of quasi-étale charts \( \pi_X \colon M \to X \) and \( \pi_Y\colon N \to Y \) together with a smooth function \( \til f \colon M \to N \) such that the following diagram commutes:
\begin{equation}\label{diagram:local.representation}
\begin{tikzcd}
M \arrow[d, "\pi_X"] \arrow[r, "\til f"] & N \arrow[d, "\pi_Y"]  \\
X \arrow[r, "f"] & Y
\end{tikzcd}
\end{equation}
We say that a local representation of \( f \) is \emph{around \( x \in X \)} if \( x \) is in the image of \( \pi_X \). 
\end{definition}
Our next lemma states that representations of smooth maps in \( \QUED \) always exist.
\begin{lemma}\label{lemma:representations.exist}
    Suppose \( f \colon X \to Y \) is a smooth in \( \QUED \). Then for all \( x \in X \) there exists a local representation of \( f \) around \( x \in X \).
\end{lemma}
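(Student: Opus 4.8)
The plan is to build the local representation by first choosing a quasi-étale chart on the target, then pulling it back along $f$ to produce a chart on the source. Concretely, fix $x \in X$ and set $y = f(x)$. Since $Y$ is a QUED, choose a quasi-étale chart $\pi_Y \colon N \to Y$ with $y$ in the image of $\pi_Y$; pick $n \in N$ with $\pi_Y(n) = y$. Independently, since $X$ is a QUED, choose a quasi-étale chart $\pi_X^0 \colon M_0 \to X$ with $x$ in the image, and pick $m_0 \in M_0$ with $\pi_X^0(m_0) = x$. Now consider the composite $f \circ \pi_X^0 \colon M_0 \to Y$. This is a smooth map from a manifold, hence a plot on $Y$. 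Apply the local subduction property (QE1) of $\pi_Y$ to this plot at the point $m_0$ with the chosen lift target $n$ (note $\pi_Y(n) = y = f(\pi_X^0(m_0))$): there is an open neighborhood $W \subset M_0$ of $m_0$ and a smooth map $\til f \colon W \to N$ with $\til f(m_0) = n$ and $\pi_Y \circ \til f = f \circ \pi_X^0$ on $W$.

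The pair $(\pi_X^0|_W, \pi_Y, \til f)$ now satisfies the commuting square in Diagram~\eqref{diagram:local.representation}, so the only remaining task is to check that $\pi_X := \pi_X^0|_W \colon W \to X$ is itself a quasi-étale chart around $x$. Since $x = \pi_X^0(m_0) \in \pi_X(W)$, $x$ is in the image. For the quasi-étale conditions: (QE1), being a local subduction, is local in the domain, so the restriction of a local subduction to an open subset of its domain is again a local subduction onto its image (and $W$ is open in $M_0$, hence $\pi_X(W)$ is open in $X$ since local subductions are open maps). (QE2) is immediate: the fibers of $\pi_X$ are the intersections of $W$ with the fibers of $\pi_X^0$, hence subsets of totally disconnected sets, hence totally disconnected. (QE3) requires a small argument: given an open $\CO \subset W$ and a smooth $g \colon \CO \to W$ with $\pi_X \circ g = \pi_X$, composing with the inclusion $\iota \colon W \into M_0$ gives $\iota \circ g \colon \CO \to M_0$ with $\pi_X^0 \circ (\iota \circ g) = \pi_X^0$ on $\CO$; by (QE3) for $\pi_X^0$, $\iota \circ g$ is a local diffeomorphism onto an open subset of $M_0$, but since $g$ has image in the open subset $W$ and $\iota$ is an open embedding, $g$ itself is a local diffeomorphism. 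Hence $\pi_X$ is a quasi-étale chart and $(\pi_X, \pi_Y, \til f)$ is the desired local representation around $x$.

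The one point demanding genuine care is the verification that (QE1), (QE3) restrict to open subsets of the domain — the paper already flags that being a local subduction is "local in the domain" (in the discussion after Definition~\ref{definition:local.subduction}), so that half is essentially cited, and (QE3) is the short diagram chase sketched above. Everything else is a direct application of the local subduction lifting property (QE1) of the target chart to the plot $f \circ \pi_X^0$. I do not expect any serious obstacle here; the lemma is really just assembling the QUED definition with the lifting property of local subductions, and the only thing that could trip one up is forgetting to shrink the source chart so that the lift $\til f$ is globally defined on it.
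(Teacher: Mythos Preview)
Your proof is correct and follows essentially the same approach as the paper: choose a quasi-étale chart on the target, a quasi-étale chart on the source, and use the (local) subduction property of the target chart to lift $f \circ \pi_X^0$ locally. The paper's version is terser---it simply asserts that the restriction $\pi_X|_M$ is part of the desired representation without verifying that it remains a quasi-étale chart---whereas you spell out the (QE1)--(QE3) check for the restriction; your extra use of the \emph{local} subduction property to hit the specific point $n$ is harmless but not strictly needed, since only the subduction property is required to produce some lift.
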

\begin{proof}
Since \( Y \) is quasi-étale, there must exist a quasi-étale chart \( \pi_Y \colon N \to Y \) around \( f(x) \in Y \).
Now let \( \pi_X \colon M' \to X \) be a quasi-étale chart around \( x \in X \). 
Since \( \pi_Y \) is a subduction, we know there must exist a local lift \( \til f \colon M \to N \) of of \( f \circ \pi_X \colon M' \to Y\) defined on some open subset \( M \subset M'\). 
The triple \( \til f, \pi_X|_{M}\) and \( \pi_Y \) is the desired local representation.
\end{proof}
Our next lemma observes that any two points in the same fiber of a quasi-étale chart can be related by a diffeomorphism.
\begin{lemma}\label{lemma:fiber.hopping}
    Suppose \( X \) is a quasi-étale diffeological space and \( \pi \colon M \to X \) is a quasi-étale chart. If \( p, q \in M \) are such that \( \pi(p) = \pi(q) \) then there exists a local diffeomorphism \( f \colon \CO \to M \) defined on a neighborhood \( \CO \subset M \) of \( p\) such that \( f(p) = q\) and \( \pi \circ f = \pi \).
\end{lemma}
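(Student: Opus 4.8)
The plan is to produce the desired $f$ by lifting a manifold chart around $p$ through $\pi$ using the local subduction property (QE1), and then invoking the rigidity condition (QE3) to upgrade the resulting smooth map to a local diffeomorphism.

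First I would choose a chart $c \colon U \to M$, with $U \subset \RR^n$ open, $0 \in U$, and $c(0) = p$, such that $c$ is a diffeomorphism onto an open neighborhood of $p$ in $M$. Then $\phi := \pi \circ c \colon U \to X$ is a plot on $X$ with $\phi(0) = \pi(p) = \pi(q)$. Since $\pi$ is a local subduction by (QE1), applying the definition with the plot $\phi$, the point $q \in M$ (which lies in the $\pi$-fiber over $\phi(0)$), and $0 \in U$, there is an open neighborhood $V \subset U$ of $0$ and a plot $\til\phi \colon V \to M$ with $\til\phi(0) = q$ and $\pi \circ \til\phi = \phi|_V$. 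Because $M$ is a manifold, $\til\phi$ is automatically smooth as a map of manifolds.

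Next I would set $\CO := c(V)$, which is an open neighborhood of $p$ in $M$, and define $f := \til\phi \circ (c|_V)\inv \colon \CO \to M$. This is smooth, it satisfies $f(p) = \til\phi(0) = q$, and a direct check gives $\pi \circ f = \pi|_{\CO}$: for $m \in \CO$, writing $v := (c|_V)\inv(m)$, we have $\pi(f(m)) = \pi(\til\phi(v)) = \phi(v) = \pi(c(v)) = \pi(m)$.

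Finally, condition (QE3) for the quasi-étale chart $\pi \colon M \to X$ applies verbatim to $f \colon \CO \to M$, an open subset of $M$ mapping smoothly into $M$ with $\pi \circ f = \pi$, so $f$ is a local diffeomorphism, which completes the proof. I do not expect any serious obstacle here; the only care required is bookkeeping — ensuring the lift from (QE1) passes through the specified point $q$ and is defined on a set genuinely open in $M$, which is why I phrase the argument through a manifold chart $c$ rather than treating $\pi$ itself as a plot, and then verifying that the hypotheses of (QE3) hold on the nose.
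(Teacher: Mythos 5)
Your argument is correct and matches the paper's proof: both obtain $f$ by applying the local subduction property (QE1) to lift $\pi$ (viewed through a chart around $p$ as a plot) through the point $q$, and then invoke (QE3) to conclude the lift is a local diffeomorphism. Your version merely spells out explicitly the chart bookkeeping that the paper leaves implicit.
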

\begin{proof}
    Since \( \pi \colon M \to X\) is a local subduction and \( M \) is a manifold, we know that there must exists an open neighborhood \( \CO \subset M  \) and a smooth function \( f \colon \CO \to M \) with the property that \( \pi \circ f = \pi \) and \( f(p) = q\).

    Since \( \pi \) is assumed to be quasi-étale, (QE3) implies that such an \( f \) must be a local diffeomorphism.
\end{proof}
Our next lemma relates properties of maps between quasi-étale diffeological spaces and properties of their representations.
\begin{proposition}\label{proposition:local presentations.of.morphisms}
Suppose \( f \colon X \to Y \) is a smooth map in \(\QUED\) and we have a local representation of \( f \) as in the Diagram~\ref{diagram:local.representation}.
\begin{enumerate}[(a)]
\item If \( f \) is a local diffeomorphism, then \( \til f \) is a local diffeomorphism.
\item If \( f \) is a local subduction then \( \til f \) is a submersion.
\item If \( f \) is constant, then \( \til f \) is locally constant.
\end{enumerate}
\end{proposition}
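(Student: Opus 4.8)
The plan is to handle the three assertions separately: (c) is elementary, while (a) and (b) each reduce to one application of axiom (QE3) from Definition~\ref{defn:quasi-étale}, once the right auxiliary lift has been manufactured. Throughout I use the commuting square $\pi_Y\circ\til f=f\circ\pi_X$, the fact that $M,N$ are manifolds (so plots of them are just smooth maps and possess differentials), and that charts of $N$ give diffeomorphisms from Euclidean opens onto open subsets. For (c): if $f$ is constant then $\pi_Y\circ\til f=f\circ\pi_X$ is constant, so $\til f$ factors through a single fiber $F=\pi_Y\inv(y_0)$ of $\pi_Y$; by (QE2) this $F$ carries the discrete diffeology (Definition~\ref{definition:totally.disconnected}), the induced map $M\to F$ is smooth, and precomposing it with any chart of $M$ exhibits it as a plot of $F$, hence locally constant. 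Thus $\til f$ is locally constant.

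For (b), I would fix $p\in M$, set $q=\til f(p)$, and choose a chart $\chi\colon W\to N$ with $W$ Euclidean, $\chi(0)=q$, $\chi$ a diffeomorphism onto an open neighbourhood of $q$. Then $\pi_Y\circ\chi$ is a plot of $Y$ with $(\pi_Y\circ\chi)(0)=f(\pi_X(p))$, so since $f$ is a local subduction I can lift it through $\pi_X(p)$: shrinking $W$, there is a plot $\psi\colon W\to X$ with $\psi(0)=\pi_X(p)$ and $f\circ\psi=\pi_Y\circ\chi$. Lifting $\psi$ once more through $p$ along the local subduction $\pi_X$ gives a smooth $\til\psi\colon W\to M$ (shrink again) with $\til\psi(0)=p$ and $\pi_X\circ\til\psi=\psi$. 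Now $\til f\circ\til\psi$ and $\chi$ are smooth maps $W\to N$ agreeing at $0$, and $\pi_Y\circ(\til f\circ\til\psi)=f\circ\pi_X\circ\til\psi=f\circ\psi=\pi_Y\circ\chi$; hence $F:=(\til f\circ\til\psi)\circ\chi\inv$, defined on the open set $\chi(W)\subseteq N$, is smooth, satisfies $\pi_Y\circ F=\pi_Y$, and fixes $q$. By (QE3) for the chart $\pi_Y$, $F$ is a local diffeomorphism near $q$, so $\til f\circ\til\psi=F\circ\chi$ is a local diffeomorphism near $0$; thus $d_p\til f\circ d_0\til\psi$ is an isomorphism and in particular $d_p\til f$ is onto. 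Since $p$ was arbitrary, $\til f$ is a submersion.

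For (a), now $f$ is a local diffeomorphism, so near $\pi_X(p)$ it restricts to a diffeomorphism $f|_U\colon U\to f(U)$ onto an open subset of $Y$ containing $\pi_Y(q)$. Running the construction of (b) but lifting the plot $(f|_U)\inv\circ\pi_Y\circ\chi$ of $X$ through $p$ along $\pi_X$, and post-composing with $\chi\inv$, I obtain a smooth map $\til h$ from an open neighbourhood of $q$ in $N$ into $M$ with $\til h(q)=p$ and $\pi_X\circ\til h=(f|_U)\inv\circ\pi_Y$ — a local representation of $f\inv$ compatible with the given charts. Then, shrinking neighbourhoods so the composites are defined, near $p$ one has $\pi_X\circ(\til h\circ\til f)=(f|_U)\inv\circ\pi_Y\circ\til f=(f|_U)\inv\circ f\circ\pi_X=\pi_X$ and $(\til h\circ\til f)(p)=p$, so (QE3) for $\pi_X$ makes $\til h\circ\til f$ a local diffeomorphism at $p$; symmetrically, near $q$, $\pi_Y\circ(\til f\circ\til h)=f\circ\pi_X\circ\til h=\pi_Y$ and $(\til f\circ\til h)(q)=q$, so (QE3) for $\pi_Y$ makes $\til f\circ\til h$ a local diffeomorphism at $q$. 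The first forces $d_p\til f$ injective, the second forces it surjective; hence $d_p\til f$ is an isomorphism and $\til f$ is a local diffeomorphism at $p$, hence everywhere.

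The only real obstacle is bookkeeping of domains: Definition~\ref{definition:local.subduction} lifts only plots with Euclidean domain, so each lift must be routed through a chart of $N$ (or $M$) and transported back by the inverse chart map, and one must repeatedly shrink neighbourhoods so that all of the composites above and all of the base-point conditions hold simultaneously on a common open set. The genuine structural input is (QE3): it turns ``a smooth self-map over $\pi_X$ or $\pi_Y$ fixing a point'' into ``a local diffeomorphism near that point,'' and this rigidity is exactly what gets transported to $\til f$ in parts (a) and (b).
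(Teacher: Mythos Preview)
Your proof is correct and follows essentially the same strategy as the paper's: for (c) use (QE2), and for (a)--(b) manufacture a smooth self-map over $\pi_Y$ (resp.\ $\pi_X$) and invoke (QE3) to force it to be a local diffeomorphism. The only cosmetic differences are that the paper lifts in one step along the composite local subduction $f\circ\pi_X$ rather than in two steps, and for (a) it concludes by noting that both $\til f$ and the auxiliary lift $g$ are submersions with $\til f\circ g$ a local diffeomorphism, whereas you verify both composites $\til h\circ\til f$ and $\til f\circ\til h$ via (QE3) directly; the substance is identical.
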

\begin{proof}
Throughout, let \( p \in M\) be arbitrary and \( q := \til f(p) \in N \), \( x := \pi_X(p) \in X \) and \( y := \pi_Y(q) \in Y\).

(a) Since \(N \) is a smooth manifold and \( f \circ \pi_X \) is a local subduction, there must exists a smooth function \( g \colon \CO \to M \) defined on an open neighborhood \( \CO \) of \( q\) such that 
\[f \circ \pi \circ g = \pi_Y \]
This implies that:
\[ \pi_Y \circ \til f \circ g = \pi_Y\]
Since \( \pi_Y\) is quasi-étale, by (QE3) it follows that \( \til f \circ g \) is a local diffeomorphism. Therefore, it follows \( \til f \) is a submersion. and the dimension of \( M \) is greater than or equal to the dimension of \( N \). 

(b) Now suppose \( f \) is a diffeomorphism. Then it is, in particular, a local subduction. 
Let \( g \) be as in the discussion for part (a). 
We can repeat the argument from part (a) where we replace \( f \) with \( f \inv\) and \( \til f \) with \( g \) and from that we conclude that \( g \) is a submersion.
Since \( \til f \circ g \) is a local diffeomorphism and both \( f \) and \( g \) are submersions we conclude that \( \til f \) is a local diffeomorphism.

(c) Since \( f \) is a constant function, it follows that the image of \( \til f \) is contained in \( \pi_Y\inv(y) \).
By assumption, we know that \( \pi_Y\inv(y)\) is totally disconnected so \( f \) is locally constant.
\end{proof}
We conclude this subsection with a lemma that simplifies the process of proving a map is a quasi-étale chart when we already know the space is quasi-étale.
\begin{lemma}\label{lemma:finding.quasi.étale.charts}
Suppose \( X \) is a quasi-étale diffeological space and suppose \( \pi \colon M \to X \) is a local subduction and the fibers of \( \pi \) are totally disconnected. Then \( \pi \) is a quasi-étale chart.
\end{lemma}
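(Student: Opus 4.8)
The plan is as follows. Since $M$ is already a manifold and $\pi$ is assumed to be a local subduction with totally disconnected fibers, the only clause of Definition~\ref{defn:quasi-étale} left to verify is (QE3): given an open $\CO \subset M$ and a smooth map $f \colon \CO \to M$ with $\pi \circ f = \pi$, we must show that $f$ is a local diffeomorphism. This is a local statement, so I fix $p \in \CO$, set $q := f(p)$, and note that $\pi(q) = \pi(p)$. Because $X$ is quasi-étale we may choose a quasi-étale chart $\pi' \colon N \to X$ around $\pi(p)$ together with a point $\til p \in N$ satisfying $\pi'(\til p) = \pi(p)$. The strategy is to transport the self-map $f$ over $\pi$ to a self-map of $N$ over $\pi'$, where (QE3) for $\pi'$ is available; to make this transport work I first establish that local lifts in both directions between $M$ and $N$ are local diffeomorphisms.

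The technical heart consists of two observations, each combining the defining lifting property of local subductions (cf.\ Definition~\ref{definition:local.subduction} and the proof of Lemma~\ref{lemma:fiber.hopping}) with (QE3) for $\pi'$. First: \emph{any local lift $g \colon U \to N$ of $\pi$ through $\pi'$ (so $U \subset M$ open and $\pi' \circ g = \pi|_U$) is a local diffeomorphism}. Indeed, fix $p_0 \in U$ and $n_0 := g(p_0)$; since $\pi$ is a local subduction and $N$ is a manifold, lift $\pi'$ through $\pi$ near $n_0$ to a smooth $k \colon W \to M$ with $\pi \circ k = \pi'|_W$ and $k(n_0) = p_0$; then $g \circ k$ is a smooth self-map of $N$ over $\pi'$ fixing $n_0$, so (QE3) forces it to be a local diffeomorphism, whence $dg_{p_0}$ is surjective and $g$ is a submersion near $p_0$. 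Now the totally disconnected fibers of $\pi$ enter: if $\dim M > \dim N$, then near $p_0$ the level set $g^{-1}(n_0)$ is an embedded submanifold of positive dimension, hence contains a non-constant smooth curve; but $g^{-1}(n_0) \subset \pi^{-1}(\pi(p_0))$, and this fiber carries the discrete diffeology (Definition~\ref{definition:totally.disconnected}), so such a curve must be constant --- a contradiction. Thus $\dim M = \dim N$ and the submersion $g$ is a local diffeomorphism. Second: \emph{any local lift $k \colon W \to M$ of $\pi'$ through $\pi$ is a local diffeomorphism}; for $n_0 \in W$ with $p_0 := k(n_0)$, lift $\pi$ through $\pi'$ near $p_0$ to some $g$ with $g(p_0) = n_0$, a local diffeomorphism by the first observation, and then $g \circ k$ is a self-map of $N$ over $\pi'$ fixing $n_0$, hence a local diffeomorphism by (QE3), so $k = g^{-1} \circ (g \circ k)$ is one near $n_0$.

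With these in hand the proof concludes quickly. Using the local subduction property of $\pi'$ produce a local lift $g \colon U \to N$ of $\pi$ through $\pi'$ with $g(q) = \til p$, and using that of $\pi$ produce a local lift $k \colon W \to M$ of $\pi'$ through $\pi$ with $k(\til p) = p$; both are local diffeomorphisms by the two observations. After shrinking domains so the composition is defined, $F := g \circ f \circ k$ is a smooth map defined near $\til p$ with $F(\til p) = g(f(p)) = g(q) = \til p$ and $\pi' \circ F = \pi'$, so (QE3) for $\pi'$ makes $F$ a local diffeomorphism near $\til p$. Since $k$ carries a neighborhood of $\til p$ diffeomorphically onto a neighborhood of $p$ and $g$ is invertible near $q$, we obtain $f = g^{-1} \circ F \circ k^{-1}$ on a neighborhood of $p$, so $f$ is a local diffeomorphism near the arbitrary point $p$; hence (QE3) holds and $\pi$ is a quasi-étale chart. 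I expect the main obstacle to be the first observation --- showing lifts of $\pi$ through the chart are local diffeomorphisms, and in particular excluding $\dim M > \dim N$: this is exactly the point where the hypothesis that the fibers of $\pi$ are totally disconnected (rather than just that $\pi$ is a local subduction) is indispensable, through the ``positive-dimensional submanifold inside a discrete set'' contradiction.
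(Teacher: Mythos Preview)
Your proof is correct and follows essentially the same approach as the paper: pick a quasi-\'etale chart $\pi' \colon N \to X$, build lifts between $M$ and $N$, use (QE3) for $\pi'$ together with the totally-disconnected-fiber hypothesis to show these lifts are local diffeomorphisms, and then conjugate the given self-map $f$ through them. The paper is terser---it constructs a single local diffeomorphism $M \to N$ and asserts that quasi-\'etale pulls back along it---whereas you spell out the conjugation explicitly with separate lifts based at $p$ and at $q = f(p)$; this extra care is warranted, since $p$ and $f(p)$ need not lie in the same small neighborhood, so your version is arguably the more complete of the two.
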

\begin{proof}
    Let \( p_0 \in M \) be an arbitrary point and let \( x_0 := \pi(p_0) \).
    We will show that \( \pi \) is quasi-étale in a neighborhood of \( p \). 
    
    Take \( \pi' \colon N \to X \) be a quasi-étale chart around \( x_0 \) and let \( q_0 \in N \) be such that \( \pi'(q_0) = x_0\)
    Since both \( \pi \) and \( \pi' \) are local subductions, we know that by possibly shrinking \( M \) and \( N \) to smaller open neighborhoods of \( p_0 \) and \( q_0 \), respectively, we can construct a pair of smooth fuctions \( f \colon M \to N \) and  \( g \colon N \to M \) such that \( f(p_0) = q_0\), \( g(q_0) = p_0 \), \( \pi' \circ f = \pi\), and \( \pi \circ g = \pi' \).

    Then it follows that \( f \circ g \colon N \to N \) is a smooth function such that \( \pi' \circ (f \circ g) = \pi' \).
    Since \( \pi' \) is quasi-étale, it follows that \( f \circ g \) is a local diffeomorphism. Therefore, it follows that \( f \) is a submersion. However, since \( \pi' \circ f = \pi \) the fibers of \( f \) must be totally disconnected and so \( f \) is a local diffeomorphism.

    Since \( f \colon M \to N \) preserves the projections to \( X \), and \( \pi' \colon N \to X \) is quasi-étale, it follows that \( \pi \) must also be quasi-étale.
\end{proof}
\subsection{Fiber products}
It is a notable property of diffeological spaces that the fiber product of two diffeological spaces always exist. However, fiber products of quasi-étale diffeological spaces may not be quasi-étale. This can happen even when the maps involved are quasi-étale.
Consider the following simple example:
\begin{example}\label{example:non.existant.fiber.product}
    Consider the \( \ZZ_2\) action on \( \RR \) by the reflection \( x \mapsto - x \). Let \( X := \RR/ \ZZ_2\). Then \( X \) is quasi-étale (it is an orbifold).
    However, consider the fiber product:
    \[ \RR \times_X \RR = \{ (x,y) \in \RR^2 \ : \ x = y \mbox{ or } x = -y \} \]
    The diffeological space \( \RR \times_X \RR \) is not a quasi-étale diffeological space. To see why, note that the domain of a quasi-étale chart around \( (0,0) \) would have to be to be one-dimensional. It is an standard exercise to show that there cannot exist a local subduction \( \pi \colon \RR \to \RR \times_X \RR \) with \( (0,0) \) in the image of \( \pi \).
\end{example}
What this tells us is that ``local subduction'' is not a strong enough condition to function as our notion of ``submersion'' between quasi-étale diffeological spaces.
In order to clarify what is going on here, will will need to use the notion of a ``cartesian'' morphism in a category.
\begin{definition}\label{definition:cartesian.morphism}
Suppose \( \BC \) is an arbitrary category. We say that a morphism \( p \colon X \to Z \) in \( \BC \) is \emph{cartesian} if for all other morphisms \( f \colon Y \to Z \) one of the following holds:
\begin{enumerate}[(1)]
\item The fiber product \( X \fiber{p}{f} Y \) exists. I.e. we have a pullback square:
\[
\begin{tikzcd}
    X \fiber{p}{f} Y \arrow[r, "\pr_1"] \arrow[d, "\pr_2"] & X \arrow[d, "p"] \\
    Y \arrow[r, "f"] & Z
\end{tikzcd}
\]
\item The collection of commutative squares of the following form is empty:
\[
\begin{tikzcd}
W \arrow[r] \arrow[d] & X \arrow[d, "p"] \\
Y \arrow[r, "f"] & Z
\end{tikzcd}
\]
\end{enumerate}
\end{definition}
\begin{example}
    Suppose \( \BC \) is the category of sets \( \Set \). Then every function in \( \Set\) is cartesian. In this category case (2) in the above definition never occurs since the empty set \( \empty \) is an initial object in \( \Set\).
\end{example}
\begin{example}
    Consider the category of smooth manifolds \( \Man\). 
    Any submersion \( p \colon M \to N \) in \( \Man \) is cartesian.

    Observe that if we permit the empty set \( \empty \) to be an initial object in \( \Man \) condition (2) above never occurs. However, we will proceed with the convention that the empty set is not a manifold.
\end{example}
\begin{example}
    In the category of diffeological spaces \( \Diffgl \) every morphism is cartesian.
\end{example}
For a general morphism in \( \QUED \) there does not appear to be a simple geometric criterion for determining when a morphism is cartesian.
However, later in this section we will state such a criterion for the special case where the co-domain is a manifold.

We use the notion of a cartesian morphism to define a class of maps in \( \QUED \) that is well-behaved for fiber products and extends the classical notion of a submersion.
\begin{definition}
    Suppose \( p \colon X \to Z \) is morphism in \( \QUED \).   We say that \( p \) is a \emph{\( \QUED \)-submersion} if \( p \) is a local subduction and \( p \) is cartesian in \( \QUED \).
\end{definition}
In our theory of quasi-étale diffeological spaces, \( \QUED\)-submersions will play the role of submersions in the theory of manifolds. Note that from Example~\ref{example:non.existant.fiber.product} we know that quasi-étale maps are \emph{not} a special case of \( \QUED\)-submersions.

Our next lemma says that \( \QUED\)-submersions are stable under taking base changes.
It is consequence of the more general fact that the properties of being a local subduction and being cartesian are both stable under base changes. We will include a proof, regardless, for completeness.
\begin{lemma}
    Suppose \( p \colon X \to Z \) is a \( \QUED\)-submersion and \( f \colon Y \to Z \) is any other morphism. The base change of \( p \) along \( f \):
    \[ \pr_2 \colon X \times_Z Y \to Y  \]
    is a \( \QUED\)-submersion.
\end{lemma}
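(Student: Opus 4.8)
The plan is to verify the two defining conditions of a $\QUED$-submersion for $\pr_2 \colon X \times_Z Y \to Y$ separately: that it is a local subduction (using that $p$ is one) and that it is cartesian in $\QUED$ (using that $p$ is). First, note that the object $X \times_Z Y$ is legitimate. Since $p$ is cartesian, the alternative in Definition~\ref{definition:cartesian.morphism} applied to $f$ shows that the pullback of $p$ along $f$ exists in $\QUED$; the degenerate case --- no commutative squares over the cospan --- can only occur when the fiber product has no points, since a point would furnish such a square, and then the statement is trivial. Moreover, unwinding the universal property of this pullback against Euclidean domains (which are objects of $\QUED$) and against the one-point space shows that its underlying set is $\{(x,y) \in X \times Y : p(x) = f(y)\}$, that a plot $U \to X \times_Z Y$ is precisely a pair of plots $a \colon U \to X$, $b \colon U \to Y$ with $p \circ a = f \circ b$, and that a point of $X \times_Z Y$ is determined by its images under $\pr_1$ and $\pr_2$. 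These are the only properties of $X \times_Z Y$ I will use.

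\emph{$\pr_2$ is a local subduction.} Let $\phi \colon U_\phi \to Y$ be a plot and let $w \in X \times_Z Y$, $u \in U_\phi$ satisfy $\pr_2(w) = \phi(u)$. Put $x := \pr_1(w)$, so $p(x) = f(\pr_2(w)) = (f \circ \phi)(u)$. Since $f \circ \phi$ is a plot on $Z$ and $p$ is a local subduction, there are an open neighborhood $V \subset U_\phi$ of $u$ and a plot $\psi \colon V \to X$ with $\psi(u) = x$ and $p \circ \psi = (f \circ \phi)|_V$. The pair $(\psi, \phi|_V)$ is compatible over $Z$, so it defines a plot $\til\phi \colon V \to X \times_Z Y$ with $\pr_1 \circ \til\phi = \psi$ and $\pr_2 \circ \til\phi = \phi|_V$. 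Since $\pr_1(\til\phi(u)) = x = \pr_1(w)$ and $\pr_2(\til\phi(u)) = \phi(u) = \pr_2(w)$, we get $\til\phi(u) = w$. Thus $\til\phi$ is the required local lift of $\phi$ through $w$.

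\emph{$\pr_2$ is cartesian in $\QUED$.} This is the formal fact that base changes of cartesian morphisms are cartesian, which I would deduce from the pasting lemma for pullback squares. Given any $g \colon W \to Y$, use that $p$ is cartesian for the composite $f \circ g \colon W \to Z$: either there are no commutative squares over that cospan --- and then none over the cospan $(\pr_2, g)$ either, since composing such a square with $\pr_1$ and using $p \circ \pr_1 = f \circ \pr_2$ would produce one --- or the pullback $X \times_Z W$ of $p$ along $f \circ g$ exists in $\QUED$. In the latter case the bottom edge $f \circ g$ of its defining pullback square factors through $Y$, so that square decomposes as the pullback square defining $X \times_Z Y$ preceded by a square with corners $X \times_Z W$, $X \times_Z Y$, $W$, $Y$; by the pasting lemma this second square is itself a pullback, i.e.\ the base change of $\pr_2$ along $g$ exists. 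Since $g$ was arbitrary, $\pr_2$ is cartesian, and together with the previous paragraph this shows $\pr_2$ is a $\QUED$-submersion.

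The content of the lemma is concentrated in the hypothesis that $p$ is cartesian: by Example~\ref{example:non.existant.fiber.product} the set-level fiber product of a mere local subduction need not be a $\QUED$ at all, so without cartesianness the object $X \times_Z Y$ would not live in our category and there would be nothing to prove. Once that hypothesis is in place --- so that $X \times_Z Y$ exists in $\QUED$ and is computed as the ordinary fiber product --- the two verifications are routine: a lift-and-assemble argument for the local subduction property, and one application of the pasting lemma for cartesianness. The only point that needs a little care is to read off the plots into $X \times_Z Y$ from its universal property rather than assuming a description in advance, which is what legitimizes the assembly step.
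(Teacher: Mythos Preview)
Your proof is correct and follows essentially the same approach as the paper's: the local subduction argument is identical (lift $f\circ\phi$ along $p$ and pair with $\phi|_V$), and your pasting-lemma argument for cartesianness is just a categorical rephrasing of the paper's observation that $(X\times_Z Y)\times_Y W \cong X\times_Z W$ as diffeological spaces. You are somewhat more careful than the paper about the degenerate case~(2) in Definition~\ref{definition:cartesian.morphism} and about reading off the plot description of $X\times_Z Y$ from its universal property, but these are cosmetic differences rather than a genuinely different route.
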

\begin{proof}
Suppose \( p \colon X \to Z \) is a \( \QUED \) submersion and \( f \colon Y \to Z \) is a morphism in \( \QUED \). Let us illustrate the situation with a diagram:
\[
\begin{tikzcd}
X \times_Z Y \arrow[r, "\pr_1"] \arrow[d, "\pr_2"] & X \arrow[d, "p"] \\
Y \arrow[r, "f"] & Z 
\end{tikzcd}
\]
We need to show that:
\[  ( X \times_Z Y ) \times_Y W \]
is a quasi-étale diffeological space.
Observe that \( ( X \times_Z Y ) \times_Y W \) is canonically diffeomorphic to \( X \times_Z W \) as a diffeological space. Since \( p \) is cartesian we conclude that \( X \times_Z W \) is a quasi-étale diffeological space so \( \pr_2 \colon X \times_Z Y \to Y \) is cartesian.

Now we must show that \( \pr_2 \colon X \times_Z Y \to Y \) is a local subduction.
Suppose \( \phi \colon U_\phi \to Y \) is a plot and let \( u \in U_\phi \) and \( (x,y) \in X \times_Z Y \) be such that \( \phi(u) = y\).
Since \( p \colon X \to Z \) is a local subduction, we know there exists an open neighborhood \( V \subset U_\phi \) of \( u \) and a lift \( \psi \colon V \to X \) with the property that \( \psi(u) = x\) and \( p \circ \psi = f \circ \phi|_V \).
Then 
\[\til \phi := \phi|_V \times \psi \colon V \to X \times_Z Y  \]
is well-defined and has the properties that \( \pr_2 \circ \til \phi = \phi|_V \) and \( \til \phi(u) = (x,y)\).
\end{proof}
It is not that easy to construct ``obvious'' examples of \( \QUED\)-submersions. It is essential to our theory that \( \QUED\)-submersions between manifolds be the same thing as ordinary submersions. However, the proof of this fact is not as obvious as one might expect.
\begin{example}\label{example:submersion.is.qeds.submersion}
    Suppose \( p \colon M \to N \) is a smooth map of manifolds. Then \( p \) is a \( \QUED \)-submersion if and only if \( p \) is a submersion.

    One of the directions in the above statement is clear from the fact that local subductions between manifolds are automatically submersions. 
    The other direction follows from Theorem~\ref{theorem:qeds.submersion.criteria} which we will state later in this subsection.
\end{example}
\subsection{Fiber products over manifolds}
In this last subsection we will give some statements about the behavior of fiber products along \( \QUED \)-submersions in the special case where the fiber product is taken over a smooth manifold.
\begin{proposition}\label{prop:fiber.product.chart}
    Suppose \( p \colon X \to B \) is a local subduction where \( B \) is a smooth manifold. Suppose we have another morphism \( f \colon Y \to B \) in \( \QUED\) for which \( X \times_B Y\) is also a quasi-étale diffeological space. 
    
    Then for any point \( (x_0,y_0) \in X \times_B Y \). 
    If \( \pi_X \colon M \to X \) and \( \pi_Y \colon N \to Y \) are quasi-étale charts around \( x \) and \( y \) respectively, then the map:
    \[ \pi_{XY} \colon M \times_B N \to X \times_B Y \qquad \pi_{XY}(m,n) := (\pi_X(m), \pi_Y(n))  \]
    is a quasi-étale chart around \( (x_0, y_0) \).
\end{proposition}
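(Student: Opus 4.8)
The plan is to reduce everything to Lemma~\ref{lemma:finding.quasi.étale.charts}. Since $X \times_B Y$ is assumed to be a quasi-étale diffeological space, to conclude that $\pi_{XY}$ is a quasi-étale chart it suffices to check three things: that $M \times_B N$ is a smooth manifold, that $\pi_{XY}$ is a local subduction, and that the fibers of $\pi_{XY}$ are totally disconnected. The point $(x_0,y_0)$ will automatically lie in the image: picking $m_0 \in M$, $n_0 \in N$ with $\pi_X(m_0) = x_0$ and $\pi_Y(n_0) = y_0$, the pair $(m_0,n_0)$ lies in $M \times_B N$ because $p(\pi_X(m_0)) = f(\pi_Y(n_0))$ by hypothesis. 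The structural point worth emphasizing is that one should \emph{not} attempt to verify (QE3) directly for $\pi_{XY}$; instead one leverages the hypothesis that $X \times_B Y$ is already quasi-étale.

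First I would check that $M \times_B N$ is a manifold. The composite $p \circ \pi_X \colon M \to B$ is a composition of local subductions ($\pi_X$ by (QE1), $p$ by hypothesis), and local subductions are closed under composition directly from Definition~\ref{definition:local.subduction}; hence $p \circ \pi_X$ is a local subduction between manifolds, so by Example~\ref{example:local.subduction.manifolds} it is a submersion. Since one of the two structure maps $M \to B$ and $N \to B$ is a submersion, the fiber product $M \times_B N \subset M \times N$ is an embedded submanifold (transversality is automatic), of dimension $\dim M + \dim N - \dim B$, with Hausdorffness and second countability inherited from $M \times N$; moreover its subset diffeology as a subspace of the diffeological space $M \times N$ agrees with its manifold diffeology, since this holds for embedded submanifolds. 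So $M \times_B N \in \Man$, and $\pi_{XY}$ is smooth because its two components $\pi_X \circ \pr_M$ and $\pi_Y \circ \pr_N$ are.

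Next I would verify that $\pi_{XY}$ is a local subduction. A plot $\phi \colon U_\phi \to X \times_B Y$ is precisely a pair of plots $\phi_X \colon U_\phi \to X$ and $\phi_Y \colon U_\phi \to Y$ satisfying $p \circ \phi_X = f \circ \phi_Y$. Given $u \in U_\phi$ and $(m,n) \in M \times_B N$ with $\pi_X(m) = \phi_X(u)$ and $\pi_Y(n) = \phi_Y(u)$, I would use that $\pi_X$ and $\pi_Y$ are local subductions to produce local lifts $\til\phi_X \colon V_1 \to M$ of $\phi_X$ and $\til\phi_Y \colon V_2 \to N$ of $\phi_Y$ with $\til\phi_X(u) = m$ and $\til\phi_Y(u) = n$. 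On $V := V_1 \cap V_2$ one computes $(p \circ \pi_X) \circ \til\phi_X = p \circ \phi_X = f \circ \phi_Y = (f \circ \pi_Y) \circ \til\phi_Y$, so $(\til\phi_X, \til\phi_Y)$ takes values in $M \times_B N$ and is the required lift of $\phi|_V$ through $(m,n)$. For the fibers: the fiber of $\pi_{XY}$ over $(x,y)$ is contained in $\pi_X\inv(x) \times \pi_Y\inv(y) \subset M \times N$; since $\pi_X\inv(x)$ and $\pi_Y\inv(y)$ are totally disconnected (as $\pi_X$, $\pi_Y$ are quasi-étale), their product has the discrete diffeology, hence so does any subset of it with the subset diffeology, so the fibers of $\pi_{XY}$ are totally disconnected. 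Applying Lemma~\ref{lemma:finding.quasi.étale.charts} then finishes the proof.

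I expect the only genuine content to be the first step — recognizing that $p \circ \pi_X$ is a submersion so that $M \times_B N$ is honestly a manifold with the right diffeology — together with the decision to route the argument through Lemma~\ref{lemma:finding.quasi.étale.charts} rather than through (QE3). The remaining steps are routine unwinding of the subset, product, and fiber-product diffeologies.
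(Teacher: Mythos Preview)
Your proposal is correct and follows essentially the same approach as the paper: both reduce to Lemma~\ref{lemma:finding.quasi.étale.charts}, verify that $M \times_B N$ is a manifold because $p \circ \pi_X$ is a submersion, and then check (QE1) and (QE2) by lifting each component of a plot separately and by observing that the fibers sit inside a product of totally disconnected sets. The only cosmetic difference is that the paper phrases the totally-disconnected-fibers check via plots rather than via the product diffeology, but the content is identical.
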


\begin{proof}
    We have a commutative diagram:
        \[
    \begin{tikzcd}
    M \arrow[dd, "\pr_2"] \arrow[rr, "\pr_1"] \times_B N \arrow[dr, "\pi_{XY}"] & & M \arrow[d, "\pi_X"] \\
        &    X \times_B Y \arrow[r, "\pr_1"] \arrow[d, "\pr_2"] & X \arrow[d, "p"] \\
    N \arrow[r,"\pi_Y"]     &   Y \arrow[r, "f"] & B
    \end{tikzcd}
    \]
   Note that \( M \times_{B} N \) is a manifold since \( p \circ \pi_X\) is a local subduction (and hence a submersion). We must show that \( \pi_{XY} \) is quasi-etale. By Lemma~\ref{lemma:finding.quasi.étale.charts}, it suffices to show that \( \pi_{XY} \) satisfies (QE1) and (QE2) from Definition~\ref{defn:quasi-étale}.

    (QE1) We need to show that \( \pi_{XY}\) is a local subduction. 
    Suppose \( \phi \colon U_\phi \to X \times_B Y\) is a plot. 
    Let \( u \in U_\phi \) and \( (m,n) \in M \times_B N \) be such that:
    \[\phi(u) = \pi_{XY}(m,n)  \]
    
    We may split \( \phi \) into a product \( \phi = \phi_1 \times \phi_2 \) where \( \phi_1\) is a plot on \( X \) and \( \phi_2 \) is a plot on \( Y \).
    Since \( \pi_X\) and \( \pi_Y\) are local subductions, it follows that there exist an open neighborhood \( V \subset U_\phi \) of \( u \) and plots:
    \[ \til \phi_1 \colon V \to M \qquad  \til \phi_2 \colon V \to N \] 
    on \( M\) and \( N\) respectively such that:
    \[\pi_X \circ \til \phi_1 = \phi_1|_V \qquad  \pi_Y \circ \til \phi_2 = \phi_2|_V \] 
    and 
    \[\til \phi_1(u) = m \qquad \til \phi_2(u) = n \]
    This pair of plots defines a plot \( \til\phi := \til \phi_1 \times \til \phi_2\) on \( M \times_B N\). Furthermore \( \pi_{XY} \circ \til \phi = \phi\) and \( \til \phi(u) = ( m,n) \). 
    This shows \( \pi_{XY}\) is a local subduction.

    (QE2) Next, we show that \( \pi_{XY}\) has totally disconnected fibers.
    Suppose 
    \[\phi \colon U_\phi \to M \times_B N \] 
    is a plot such that the image of \( \phi \) is contained in \( \pi_{XY}\inv(x,y) \) for some \( (x,y) \in X \times_B Y \).
    Then \(\pr_1 \circ \phi \) is a plot with image contained in \( \pi_X\inv(x) \) and \( \pr_2 \circ \phi\) is a plot with image contained in \( \pi_Y\inv(y) \). Since \( \pi_X\) and \( \pi_Y\) have totally disconnected fibers, it follows that \( \phi \) is locally constant. \( \phi \) was arbitrary we conclude that \( \pi\inv_{XY}(x,y)\) is totally disconnected.
\end{proof}
The next theorem is our main result of this section. It says essentially that a smoothly parameterized family of quasi-étale diffeological spaces has a quasi étale total space.
\begin{theorem}\label{theorem:qeds.submersion.criteria}
    Suppose \( p \colon X \to B \) is a morphism in \( \QUED \) with \( B\) a smooth manifold and \( p \) a local subduction.
    Then \( p \) is a \( \QUED\)-submersion if and only if the fibers of \( p \) are quasi-étale.
\end{theorem}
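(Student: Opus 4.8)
The plan is to prove the two implications separately; the forward one is a formal consequence of cartesianness, and the reverse one carries all the weight. For the forward direction, suppose $p$ is a $\QUED$-submersion, hence cartesian in $\QUED$. For any $b\in B$ the inclusion $\{b\}\hookrightarrow B$ is a morphism in $\QUED$ (a zero-dimensional manifold), so cartesianness forces the pullback $X\times_B\{b\}$ to exist in $\QUED$; but this pullback is $p^{-1}(b)$ with its subspace diffeology, so every fibre of $p$ is quasi-étale.

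For the reverse direction, assume every fibre of $p$ is quasi-étale. Since $p$ is already a local subduction, I only need $p$ to be cartesian, i.e.\ that $X\times_B Y$ is quasi-étale for every $f\colon Y\to B$ in $\QUED$. \emph{Step 1} is the special case $Y=B$: I claim $X$ itself is quasi-étale. Fix $x_0\in X$ with $b_0=p(x_0)$, choose a chart $U\cong\RR^k$ of $B$ around $b_0$ and a quasi-étale chart $\pi_F\colon F\to p^{-1}(b_0)$ around $x_0$. The key construction is a smooth map $\pi\colon F\times U\to X$ with $p\circ\pi=\pr_U$ and $\pi|_{F\times\{b_0\}}=\iota\circ\pi_F$ (with $\iota\colon p^{-1}(b_0)\hookrightarrow X$): a ``horizontal thickening'' of the fibre chart, obtained by lifting the projection $F\times U\to U\hookrightarrow B$ through $p$ with its restriction to $F\times\{b_0\}$ prescribed. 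Granting this, I would shrink $F$ and $U$ and check $\pi$ is a quasi-étale chart around $x_0$. For (QE1) and (QE2): a plot into $X$ has a $p$-component which lifts into $U$ because $B$ is a manifold, and its fibrewise part is then handled by the local subduction $\pi_F$, while the fibres of $\pi$ lie inside the slices $F\times\{v\}$ and a local dimension count (the image of $\pi$ is locally $(k+\dim F)$-dimensional) forces them discrete. For (QE3): if $g\colon\CO\to F\times U$ satisfies $\pi\circ g=\pi$, composing with $p$ gives $\pr_U\circ g=\pr_U$, so $g(f,v)=(g_1(f,v),v)$ with $\pi_F\circ g_1(\cdot,b_0)=\pi_F$; (QE3) for $\pi_F$ makes $g_1(\cdot,b_0)$ a local diffeomorphism, invertibility of the $F$-directional differential of $g_1$ persists for $v$ near $b_0$ by continuity, and hence $g$ is a local diffeomorphism. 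This produces a chart around every point, so $X\in\QUED$.

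\emph{Step 2} handles general $Y$. Fix $f\colon Y\to B$ in $\QUED$ and $(x_0,y_0)\in X\times_B Y$, and choose a chart $\pi_X\colon M\to X$ around $x_0$ (available by Step 1) and a chart $\pi_Y\colon N\to Y$ around $y_0$. Since $p$ and $\pi_X$ are local subductions between manifolds, $p\circ\pi_X$ is a submersion, so the fibre product $M\times_B N$ of $p\circ\pi_X$ and $f\circ\pi_Y$ is a manifold and $\pi_{XY}\colon M\times_B N\to X\times_B Y$, $(m,n)\mapsto(\pi_X m,\pi_Y n)$, is a smooth map whose image contains $(x_0,y_0)$. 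I claim $\pi_{XY}$ is a quasi-étale chart. Conditions (QE1) and (QE2) are verified verbatim as in the proof of Proposition~\ref{prop:fiber.product.chart} — that part of the argument never used quasi-étaleness of $X\times_B Y$. For (QE3), let $g\colon\CO\to M\times_B N$ satisfy $\pi_{XY}\circ g=\pi_{XY}$. Then $\pi_Y\circ\pr_N\circ g=\pi_Y\circ\pr_N$, and since $\pr_N$ is a submersion and the fibres of $\pi_Y$ are totally disconnected, $\pr_N\circ g$ is locally constant along the fibres of $\pr_N$, hence descends locally through $\pr_N$ to a smooth map $g^N\colon N\to N$ with $\pi_Y\circ g^N=\pi_Y$, which is a local diffeomorphism by (QE3) for $\pi_Y$. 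The complementary component of $g$ restricts, on each $\pr_N$-fibre $M_n:=(p\pi_X)^{-1}(f\pi_Y(n))$, to a self-map $h_n$ with $\pi_X|_{M_n}\circ h_n=\pi_X|_{M_n}$; since $p^{-1}(f\pi_Y(n))$ is quasi-étale by hypothesis and $\pi_X|_{M_n}\colon M_n\to p^{-1}(f\pi_Y(n))$ is a local subduction with totally disconnected fibres, Lemma~\ref{lemma:finding.quasi.étale.charts} makes it a quasi-étale chart, so $h_n$ is a local diffeomorphism. The differential of $g$ is then block triangular with both diagonal blocks invertible, so $g$ is a local diffeomorphism. Hence $X\times_B Y\in\QUED$ for every $f$, so $p$ is cartesian and therefore a $\QUED$-submersion.

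The main obstacle is the construction in Step 1 of the horizontal thickening $\pi\colon F\times U\to X$ — essentially a local triviality statement: a chart of the fibre $p^{-1}(b_0)$ extends to a chart of $X$ over a coordinate neighbourhood of $b_0$. This is the one place the local subduction hypothesis must be used in a genuinely geometric rather than formal way, since it amounts to lifting $F\times U\to U\hookrightarrow B$ through $p$ with prescribed values along the slice $F\times\{b_0\}$, i.e.\ to producing a smooth family of local sections of $p$ indexed by the points of $\pi_F(F)$. Once $\pi$ is in hand, the verification that it is a chart, and all of Step 2, are comparatively routine given Lemma~\ref{lemma:finding.quasi.étale.charts} and Proposition~\ref{prop:fiber.product.chart}.
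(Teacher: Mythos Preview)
You have misread the hypotheses: the statement assumes $p\colon X\to B$ is a morphism \emph{in $\QUED$}, so $X$ is already a quasi-\'etale diffeological space. Your Step~1 is therefore entirely unnecessary, and the ``main obstacle'' you worry about --- constructing the horizontal thickening $\pi\colon F\times U\to X$ --- is indeed a genuine obstacle (the local subduction property gives a lift through one prescribed point, not along a whole prescribed slice $F\times\{b_0\}$), but fortunately you never needed to overcome it. The paper simply takes a quasi-\'etale chart $\pi_X\colon M\to X$, which exists by hypothesis, and proceeds directly to what you call Step~2.

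Your Step~2 is essentially the paper's argument, but there is a small gap in your descent step for (QE3). You argue that $\pr_N\circ g$ is locally constant along the fibres of $\pr_N$ because those fibres map into totally disconnected fibres of $\pi_Y$; however, this only yields constancy on \emph{connected components} of the $\pr_N$-fibres, and you have not arranged these to be connected. The paper handles this by invoking Lemma~\ref{lemma:quasi.étale.alternate} so that $g$ has a fixed point $(m_0,n_0)$, and then shrinks the domain to $\CO=\CU\times_B\CV$ with $\CU\subset M$ chosen so that each $M_b\cap\CU$ is connected (possible since $\til p$ is a submersion). Once this is done, your block-triangular argument for the differential of $g$ goes through exactly as in the paper: the horizontal block is invertible by (QE3) for $\pi_Y$, and the vertical block is invertible because $\pi_X|_{M_b}$ is a quasi-\'etale chart on the fibre $X_b$ (your use of Lemma~\ref{lemma:finding.quasi.étale.charts} here is correct; the paper packages the same fact as Lemma~\ref{lemma:restriction.to.fiber}).
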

This somewhat innocent looking theorem has a surprisingly involved proof.

Before getting to the proof of this theorem. We will first prove a lemma that simplifies the process of determining whether or not a map is quasi-étale. Note that the only difference is a subtle change in the last axiom in the definition of quasi-étale.
\begin{lemma}\label{lemma:quasi.étale.alternate}
Suppose \( \pi \colon M \to X \) is a smooth map of diffeological spaces with \( M \) a smooth manifold. Then \( \pi \) is quasi-étale if and only if the following conditions hold:
\begin{enumerate}[(1)]
    \item \( \pi \) is a local subduction,
    \item the fibers of \( \pi \) are totally disconnected,
    \item for all \( p \in M \) and smooth functions:
    \[ f \colon \CO \to M  \]
    such that \( \CO \) is an open neighborhood of \( p \), \( f(p)= p\)  and \( \pi \circ f = \pi\) we have that \(f \) is a diffeomorphism in a neighborhood of \( p \).
\end{enumerate}
\end{lemma}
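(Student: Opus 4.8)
The plan is to handle the forward implication in one line and focus on the converse. If $\pi$ is quasi-étale then (QE3) applies to \emph{any} $f \colon \CO \to M$ with $\pi \circ f = \pi$, giving that $f$ is a local diffeomorphism; in particular, whenever such an $f$ fixes a point $p$ it is a diffeomorphism near $p$, so conditions (1)--(3) hold. The only thing being observed is that (3) is a weaker-looking special case of (QE3).

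For the converse, assume (1), (2), (3), let $f \colon \CO \to M$ be smooth with $\CO \subset M$ open and $\pi \circ f = \pi$, and fix an arbitrary $p \in \CO$. Set $q := f(p)$; from $\pi \circ f = \pi$ we get $\pi(q) = \pi(p)$, so $p$ and $q$ lie in a common fiber of $\pi$. The key step is to produce a local ``fiber-hopping'' map $g$ near $q$ with $g(q) = p$ and $\pi \circ g = \pi$, and then play $f$ and $g$ off against each other. We cannot simply invoke Lemma~\ref{lemma:fiber.hopping}, since that presupposes that the target of $\pi$ lies in $\QUED$, which is exactly what is under construction; instead I would build $g$ directly from (1). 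Choose a chart $\kappa \colon U \to M$ (with $U$ a Euclidean set) onto a neighborhood of $q$, say $\kappa(u_0) = q$. Then $\pi \circ \kappa$ is a plot on the target of $\pi$ (by Example~\ref{example:diffeology.manifold}), and $(\pi\circ\kappa)(u_0) = \pi(q) = \pi(p)$, so the local subduction property applied at $u_0$ with target point $p$ yields an open $V \ni u_0$ and a plot $\til\phi \colon V \to M$ with $\til\phi(u_0) = p$ and $\pi \circ \til\phi = \pi \circ \kappa|_V$. Since $M$ is a manifold, $\til\phi$ is smooth, and $g := \til\phi \circ (\kappa|_V)\inv$ is the desired smooth map on the open neighborhood $\kappa(V)$ of $q$, with $g(q) = p$ and $\pi \circ g = \pi$.

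Now, by continuity of $f$ together with $f(p) = q \in \operatorname{dom}(g)$, there is a neighborhood $W_p$ of $p$ with $f(W_p) \subset \operatorname{dom}(g)$, so $g \circ f$ is defined on $W_p$, fixes $p$, and satisfies $\pi \circ (g \circ f) = \pi \circ f = \pi$; by (3) it is a diffeomorphism near $p$. Symmetrically, using $g(q) = p \in \operatorname{dom}(f)$, the map $f \circ g$ is defined on a neighborhood of $q$, fixes $q$, preserves $\pi$, and hence is a diffeomorphism near $q$. Differentiating at the fixed points gives that $T_q g \circ T_p f$ and $T_p f \circ T_q g$ are both linear isomorphisms; elementary linear algebra (the first forces $T_p f$ injective and $T_q g$ surjective, the second forces $T_q g$ injective and $T_p f$ surjective) then shows $T_p f$ is an isomorphism, so by the inverse function theorem $f$ is a local diffeomorphism at $p$. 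As $p \in \CO$ was arbitrary, this is exactly (QE3), and $\pi$ is quasi-étale.

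I do not expect a serious obstacle here: the real content is the observation that the ``symmetry'' $f$ can be inverted, up to a fiber-preserving map manufactured from the local subduction axiom, after which the two-sided composition argument is formal. The only points needing a little care are checking that $\pi$ restricted to a chart domain is genuinely a plot so that (1) can legitimately be applied, and the bookkeeping of the two (independent) shrinkings that make $g \circ f$ and $f \circ g$ defined; neither is difficult.
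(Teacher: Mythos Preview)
Your proof is correct and follows essentially the same route as the paper: build a fiber-preserving map $g$ sending $f(p)$ back to $p$ using the local subduction property, then apply condition (3) to the composite. The only difference is cosmetic: the paper uses just the one composite $g\circ f$ to see that $f$ is an immersion and then finishes with a dimension count, whereas you apply (3) to both $g\circ f$ and $f\circ g$ and read off invertibility of $T_pf$ directly---a slightly cleaner endgame but the same idea.
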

\begin{proof}
    Note that the only difference between the two conditions above and the definition of quasi-étale is the additional stipulation that \( f \) has a fixed point. Therefore, the \( \Rightarrow \) case is clear.

    Now, suppose we have a map \( \pi \colon M \to X  \) which satisfies properties (1-3). We must show that \( \pi \) satisfies (QE3).
    Consider a smooth map \( f \colon \CO \to Y  \) where \( \CO \subset X \) is an open subset and \( \pi \circ f = \pi\).
    To show that \( f \) is a local diffeomorphism, it suffices to show that it is a local diffeomorphism in a neighborhood of an arbitrary \( p \in \CO \) so let \( p \in \CO \) be fixed.

    Since \( \pi \) is a local subduction and \( M \) is a smooth manifold, there exists an open neighborhood \( \CU \subset M \) of \( f(p) \) and a smooth function \( \psi \colon \CU \to M \) such that \( \psi(f(p)) = p \) and \( \pi \circ \psi = \pi\).

    We can assume without loss of generality that \( f\inv(\CU) \subset \CO \) and therefore the function \( \psi \circ f\) is well defined.
    This function also satisfies \( (\psi \circ f)(p) = p\) and \( \pi \circ (\psi \circ f) = \pi \). By property (3) above we conclude that \( \psi \circ f \) is a local diffeomorphism. Therefore, \( f \) must be an immersion. By a dimension count, we conclude that \( f \) is a local diffeomorphism.
\end{proof}
Our next lemma tells us that if we have a smoothly parameterized family of quasi-étale diffeological spaces with a quasi-étale total space, then one can find a quasi-étale chart on a fiber by restricting a quasi-étale chart on the total space.
\begin{lemma}\label{lemma:restriction.to.fiber}
    Suppose \( p \colon X \to B \) is smooth with \( X \) quasi-étale and \( B \) a smooth manifold and assume that for all \( b \in B\) 
    \[ X_b := p\inv(b) \]
    is quasi-étale.

    Then if \( \pi \colon M \to X \) is a quasi-étale chart, it follows that:
    \[ \pi|_{(p \circ \pi)\inv(b)}  \colon (p \circ \pi)\inv(b) \to X_b  \]
    is a quasi-étale chart.
\end{lemma}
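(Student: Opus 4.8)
The plan is to verify the three conditions of being a quasi-étale chart for the restricted map $\pi_b := \pi|_{(p\circ\pi)\inv(b)}$, taking advantage of the fact that $M_b := (p\circ\pi)\inv(b)$ is a smooth submanifold of $M$. First I would observe that since $p\circ\pi \colon M \to B$ is a local subduction (being the composite of a local subduction $\pi$ with the submersion... wait, $p$ need not be a submersion a priori) — more carefully, $p\circ\pi$ is smooth from a manifold to a manifold, and I claim it is a submersion: indeed $\pi$ is a local subduction onto $X$, $X$ is quasi-étale, and one can check using a local representation (Lemma~\ref{lemma:representations.exist} and Proposition~\ref{proposition:local presentations.of.morphisms}) that $p\circ\pi$ is a local subduction between manifolds, hence a submersion by Example~\ref{example:local.subduction.manifolds}. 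Therefore $M_b = (p\circ\pi)\inv(b)$ is an embedded submanifold of $M$, so it is itself a smooth manifold, and $\pi_b$ maps it into $X_b = p\inv(b)$. This reduces the problem to checking (QE1), (QE2), (QE3) for $\pi_b \colon M_b \to X_b$; I would use Lemma~\ref{lemma:quasi.étale.alternate} so that only the fixed-point version of (QE3) is needed.

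**The three conditions.** For (QE1), that $\pi_b$ is a local subduction: given a plot $\phi \colon U_\phi \to X_b$ and $u \in U_\phi$, $m \in M_b$ with $\pi_b(m) = \phi(u)$, I compose with the inclusion $X_b \into X$ to get a plot of $X$, lift it locally through the local subduction $\pi$ to $\til\phi \colon V \to M$ with $\til\phi(u) = m$; then $p\circ\pi\circ\til\phi$ is the constant map at $b$ (since $\phi$ lands in $X_b$), so $\til\phi$ actually takes values in $M_b$, giving the required lift. For (QE2), the fibers of $\pi_b$ are among the fibers of $\pi$ (a fiber $\pi_b\inv(x)$ for $x \in X_b$ equals $\pi\inv(x) \cap M_b$, which sits inside $\pi\inv(x)$), and totally disconnected subsets of totally disconnected sets are totally disconnected with the subspace diffeology, so this is immediate from (QE2) for $\pi$.

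**The main obstacle: (QE3).** The interesting point is (QE3), and here is where I expect the real work. Suppose $f \colon \CO \to M_b$ is smooth, $\CO \subset M_b$ open, $f(p) = p$, $\pi_b \circ f = \pi_b$. I want $f$ to be a diffeomorphism near $p$. The difficulty is that $f$ is a priori only defined on the submanifold $M_b$, whereas the quasi-étale condition (QE3) for $\pi \colon M \to X$ concerns maps defined on open subsets of the ambient manifold $M$. So I would need to \emph{extend} $f$ to a map defined near $p$ in $M$ that still commutes with $\pi$. The natural approach: since $p\circ\pi$ is a submersion, locally near $p$ we can choose a submanifold chart, writing a neighborhood of $p$ in $M$ as (a neighborhood of $p$ in $M_b$) $\times$ (an open set in $B$ near $b$), via a local section of $p\circ\pi$; then try to build an extension $\hat f$ by "transporting $f$ along the fibers" using the local-subduction property of $\pi$ to lift, for each nearby $b'$, an appropriate fiber-preserving map. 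One must then check $\pi\circ\hat f = \pi$ holds on the ambient neighborhood, not just on $M_b$ — this uses that $\pi$ restricted to each nearby fiber $M_{b'}$ has totally disconnected image-fibers, forcing the two candidate maps $\pi\circ\hat f$ and $\pi$ to agree once they agree at one point and vary continuously. Applying (QE3) (in the fixed-point form of Lemma~\ref{lemma:quasi.étale.alternate}) to $\hat f$ gives that $\hat f$ is a local diffeomorphism of $M$ near $p$; since $\hat f$ preserves the fibration $p\circ\pi$ and restricts to $f$ on $M_b$, a dimension count (or the implicit function theorem applied to the fibration) shows $f$ itself is a local diffeomorphism of $M_b$. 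An alternative, possibly cleaner, route that I would also consider: use Proposition~\ref{prop:fiber.product.chart} or the fiber-product structure — view $M_b$ as $M \times_B \{b\}$, and reduce (QE3) for $\pi_b$ to (QE3) for $\pi$ via the universal property, since an endomorphism of $M_b$ over $X_b$ can be composed with the inclusion into $M$ over $X$. The crux in either approach is the passage between the ambient manifold $M$ and the fiber submanifold $M_b$; everything else is a routine diffeological lifting argument. Once (QE1)–(QE3) are established, Lemma~\ref{lemma:finding.quasi.étale.charts} is not even needed — $\pi_b$ is directly a quasi-étale chart around any chosen point of $X_b$, and since the point was arbitrary, around every point of $X_b$, completing the proof.
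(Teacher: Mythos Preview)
The paper's proof is one sentence: apply Proposition~\ref{prop:fiber.product.chart} with \( f \colon Y \to B \) taken to be the inclusion \( \{b\} \hookrightarrow B \), so that \( M \times_B N = M_b \), \( X \times_B Y = X_b \), and \( \pi_{XY} = \pi_b \). You do mention exactly this route as your ``alternative, possibly cleaner'' approach at the end; that alternative \emph{is} the paper's entire argument.

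Your primary plan --- verifying (QE1), (QE2), (QE3) by hand --- is much more laborious than necessary, and the reason is that you overlooked the role of Lemma~\ref{lemma:finding.quasi.étale.charts}. The hypothesis of the lemma already tells you that the target \( X_b \) is quasi-\'etale. Once you have (QE1) and (QE2) for \( \pi_b \), Lemma~\ref{lemma:finding.quasi.étale.charts} hands you (QE3) for free; there is no need to prove it directly. (This is exactly what the proof of Proposition~\ref{prop:fiber.product.chart} does internally: it checks only (QE1) and (QE2), then invokes Lemma~\ref{lemma:finding.quasi.étale.charts}.) You actually state at the end that Lemma~\ref{lemma:finding.quasi.étale.charts} ``is not even needed'' --- but it is precisely the tool that makes the problem trivial.

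Your direct attack on (QE3) also has a real gap. You propose to extend \( f \colon \mathcal{O} \to M_b \) to an ambient map \( \hat f \) on a neighborhood of \( p \) in \( M \) satisfying \( \pi \circ \hat f = \pi \), then apply (QE3) for \( \pi \). But you give no mechanism for producing such an extension: a product decomposition \( M \cong M_b \times U \) coming from a submersion chart for \( p \circ \pi \) has no a priori relation to \( \pi \), so defining \( \hat f(m,u) = (f(m), u) \) will generally not satisfy \( \pi \circ \hat f = \pi \). The phrase ``transporting \( f \) along the fibers using the local-subduction property of \( \pi \)'' does not specify what is being lifted or why the result commutes with \( \pi \). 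A smaller issue: your justification that \( p \circ \pi \) is a submersion appeals to Proposition~\ref{proposition:local presentations.of.morphisms}(b), but that requires \( p \) itself to be a local subduction, which the lemma as stated does not assume (the paper's one-line proof has the same hidden assumption, since Proposition~\ref{prop:fiber.product.chart} also hypothesizes \( p \) a local subduction; in practice the lemma is only invoked under that hypothesis).
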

\begin{proof}
    This is an immediate consequence of Proposition~\ref{prop:fiber.product.chart} where we take \( f \colon N \to B \) to be the inclusion of a point \( \{ b \} \into B \).
\end{proof}
With these observations, we can now prove the theorem.
\begin{proof}[Proof of Theorem~\ref{theorem:qeds.submersion.criteria}]
    Suppose \( B \) is a smooth manifold and let \( p \colon X \to B \) be a local subduction and a morphism in \( \QUED\). We must show that \( p \) is a \( \QUED\)-submersion if and only if the fibers of \( p \) are quasi-étale diffeological spaces.

    (\( \Rightarrow\)) Suppose that \( p \) is a \( \QUED\)-submersion. Then \( p \) must be cartesian and it follows that for all \( b \in B \) we have that the fiber product:
    \[ \{ b \} \times_{B} X \cong p\inv(b)  \]
    is a quasi-étale diffeological space.

    (\( \Leftarrow \) ) Suppose that \( p \colon X \to B \) is a local subduction in \( \QUED \) where \( B \) is a smooth manifold. Assume that for all \( b \in B \) we have that \( p\inv(b) \) is a quasi-étale diffeological space. 
    
    We must show that \( p \) is cartesian. Let \( f \colon Y \to B \) be an arbitrary smooth map in \( \QUED \). Our task is to show that \( X \times_B Y \) quasi-étale.
    
    Suppose \( \pi_X \colon M \to X \) and \( \pi_Y \colon N \to Y \) are quasi-étale charts.
    Let \( \pi_{XY} \colon M \times_B N \to X \times_B Y\) be the associated smooth map. We claim that \( \pi_{XY}\) is quasi-étale.

    First let us establish some notation and make a few observations: 
    \begin{itemize}
    \item Let 
    \[\til p := p \circ \pi_X \colon M \to B \qquad \til f := f \circ \pi_Y \colon N \to B  \]
    We remark that \( \til p  \) is a submersion between smooth manifolds.
\item Given \( b \in B \), let \( X_b := p\inv(b)\) and \( M_b := \til p\inv(b) \). Note that, by Lemma~\ref{lemma:restriction.to.fiber}, we know that 
    \[\pi_X|_{M_p} \colon M_p \to X_p \]
    is a quasi-étale chart.
    \item Let \( q := \pr_2 \colon M \times_B N \to N\). Note that since \( q \) is the base change of \( \til p \) along \( \til f \) it follows that \( q \) is a submersion.
    \end{itemize}

    We can illustrate the situation with the following diagram:
    \[
    \begin{tikzcd}
    M \arrow[dd, "q"] \arrow[rr, "\pr_1"] \times_B N \arrow[dr, "\pi_{XY}"] & & M \arrow[d, "\pi_X", swap] \arrow[dd, "\til p", bend left] \\
        &    X \times_B Y \arrow[r, "\pr_1"] \arrow[d, "\pr_2"] & X \arrow[d, "p", swap] \\
    N \arrow[rr, "\til f", bend right]\arrow[r,"\pi_Y"]     &   Y \arrow[r, "f"] & B
    \end{tikzcd}
    \]
    
    (QE1) We must show \( \pi_{XY} \) is a local subduction. Suppose \( \phi \colon U_\phi \to X \times_B Y \) is a plot and we have a point \( u \in U_\phi \) and \( (m,n) \in M \times_B N \) such that \( \pi_{XY}(m,n) = \phi(u) \). We know that there exist plots \( \phi_X \colon U_\phi \to X \) and \( \phi_Y \colon U_\phi \to Y \) such that:
    \[ \phi(u) = (\phi_X(u), \phi_Y(u)) \quad \mbox{ and } \quad p \circ \phi_X = f \circ \phi_Y  \]
    Since \( \pi_X \) and \( \pi_Y \) are local subductions, there must exist an open neighborhood \( V \) of \( u \in  U_\phi \) together with lifts \( \til \phi_X \) and \( \til \phi_Y \) such that: 
    \[\pi_X \circ \til \phi_X = \phi_X  \qquad \pi_Y \circ \til \phi_Y = \phi_Y \]
    and:
    \[  \til \phi_X(u) = m \qquad \til \phi_Y(u) = n \] 
    Therefore, we can define a map:
    \[ \til \phi \colon V \to X \times Y \qquad \til \phi(u) = (\til \phi_X(u), \til \phi_Y(u)) \]
    We claim that the image of \( \til \phi \) is contained in \( X \times_B Y \). This follows from a direct calculation:
    \begin{align*}
    \til p \circ \til \phi_X &= p \circ \pi_X \circ \til \phi_X \\
    &= p \circ \phi_X \\ 
    &= f \circ \phi_Y \\
    &= f \circ \pi_Y \circ \til \phi_Y \\
    &= \til f \circ \til \phi_Y
    \end{align*}
    Furthermore, it follows immediately from the definition of \( \pi_{XY} \) that \( \pi_{XY} \circ \til \phi = \phi \) and \( \til \phi(u) = (m,n) \). This shows that \( \pi_{XY} \) is a local subduction.

    (QE2) If \( \phi \colon U_\phi \to M \times_B N \) is a smooth map with image contained in the fiber of \( \pi_{XY}\), then it folows that \( \pr_1 \circ \pi_{XY} \circ \phi \) is locally constant. Commutativity of the top square implies that \( \pi_X \circ \pr_1 \circ \phi \) is locally constant. Since \( \pi_X \) is quasi-étale it follows that \( \pr_1 \circ \phi \) is locally constant. A symmetrical argument shows that \( \pr_2 \circ \phi \) is locally constant. Since each component of \( \phi \) is locally constant it follows that \( \phi \) is locally constant.
    
    (QE3) We utilize the simplification from Lemma~\ref{lemma:quasi.étale.alternate}.
    Let us fix \( (m_0,n_0) \in M \times_B N \) and suppose \( g \colon \CO \to M \times_B N \) is a smooth function such that \( \pi_{XY} \circ g = \pi_{XY} \) and \( g(m_0,n_0) = (m_0,n_0) \). We are finished if we can show \( g \) is a local diffeomorphism in a neighborhood of \( (m_0, n_0) \).

    Our strategy to show \( g \) is a local diffeomorphism involves several steps. This fact follows from the following sequence of claims:
    \begin{enumerate}[(1)]
        \item In a neighborhood of \( (m_0,n_0) \), \(g \) maps fibers of \( q \) to fibers of \( q \).
        \item Since \( \pi_Y \) is quasi-étale, the horizontal (relative to \( q \)) component of \( g \) is a local diffeomorphism.
        \item Since \( \pi_X|_{M_b}\) is quasi-étale for all \( b \in B \), it follows that the vertical component of \( g \) is a local diffeomorphism.
        \item Since both the horizontal and vertical components of \( g \) are local diffeomorphisms, it follows that \( g \) is a local diffeomorphism.
    \end{enumerate}
      
    (1) Since we are only concerned with the behavior of \( g \) in an open neighborhood of \( (m_0,n_0) \) and \( (m_0,n_0) \) is a fixed point of \( g \) we can freely restrict \( g \) to arbitrarily small neighborhoods of \( (m_0,n_0) \). Therefore, we can assume without loss of generality that there exist two open subsets \( \CU  \subset M\) and \( \CV \subset N \) such that \( \CO = \CU \times_B \CV \). Since \( \til p \) is a submersion, we can choose \( \CU \) in such a way that for all \( b \in B \) we have that:
    \[  M_b \cap \CU  \]
    is connected. 

    Note that if \( n \in N \) is a point such that \( \til f(n) = b\), it follows that:
    \[ q\inv(n) =  M_{b} \times \{ n \} \]
    Consequently, it follows that for all \( n \in N \) the sets:
    \[  q\inv(n) \cap \CO \]
    are connected.

    We claim that \( g \) must preserve the fibers of \( q \) on such a domain. To see why, observe that by following the commutative diagram above and the fact that \( \pi_{XY} \circ g = \pi_{XY}\) we have that:
    \[ \pi_Y \circ q \circ g = \pr_2 \circ \pi_{XY} \circ g = \pr_2 \circ \pi_{XY} = \pi_Y \circ q\]
    Since, for all \( n \in N \), the subset \( q\inv(n) \cap \CO \) are connected and the fibers of \( \pi_Y \) are totally disconnected, it follows that the image:
    \[q \circ g ( q\inv(n) \cap \CO  )   \]
    is a point. This implies that the image of the fibers of \( q \) under \( g \) are contained in fibers of \( q \). Since \( q \) is a submersion, it follows that there must exist a smooth function \( h \) which makes the following diagram commute:
    \[
    \begin{tikzcd}
        \CO \arrow[d, "q|_{\CO}"]  \arrow[r, "g"]& M \times_B N  \arrow[d, "q"] \\
        \CU \arrow[r, "h"] & N
    \end{tikzcd}
    \]
    
    (2) Note that since \( \pi_{XY} \circ g = \pi_{XY} \) it follows that \( \pi_Y \circ h = \pi_Y\). 
    Since \( \pi_Y \) is quasi-étale it follows that \( h \) is a local diffeomorphism.
    
    (3) For each \( n \in N \) let \( \til f(n) = b\). Note that:
    \[ q\inv(n) \cap \CO = (M_b \cap \CU)  \times \{ n \} \]
    Furthermore, we remark that \( \til f \circ h = \til f\) so it follows that:
    \[ g((M_b \cap \CU) \times \{ n \} ) \subset M_b \times \{ h(n) \} \]
    Therefore, let:
    \[ v_n \colon M_b \cap \CU \to M_b  \]
    the the map induced by \( g \) at the level of fibers. In other words, \( v_n \) is the vertical component of \( g \) at the \( q \) fiber of \( n \).

    Observe that since \( \pi_{XY} \circ g = \pi_{XY} \) it follows that \( \pi_X \circ v_b = \pi_X \).
    From a previous lemma, we saw that \( \pi_X |_{M_b} \) is quasi-étale for all \( b \in B \) whenever \( M_b \) is non-empty. From the quasi-étale property, it follows that \( v_b \) is a local diffeomorphism.

    (4) Since both \( h \) and \( v_b \) are local diffeomorphisms for all \( b \in B\) it follows that \( g \) is a local diffeomorphism.
\end{proof}
\section{Quasi-étale groupoids}\label{section:quasi.étale.groupoids}
We will now look at groupoid objects in the category of quasi-étale diffeological spaces. We will also need to define the notion of a local groupoid in this context. It turns out that
local groupoids will play a key role in constructing the differentiation functor.
\subsection{Groupoid objects in \texorpdfstring{\( \QUED \)}{QUED} }
\begin{definition}
A \emph{\(\QUED\)-groupoid} consists of two quasi-étale diffeological spaces \( \CG \) and \( X \) called the \emph{arrows} and \emph{objects} respectively together with a collection of smooth maps:
\begin{itemize}
\item Two \(\QUED\)-submersions called the \emph{source} and \emph{target}:
\[ \s \colon \CG \to X \qquad  \t \colon \CG \to X \]
\item A smooth map called the \emph{unit}
\[ \u \colon X \to \CG  \qquad x \mapsto 1_x  \]
\item A smooth map called \emph{multiplication}
\[
\m \colon \CG \fiber{s}{t} \CG \to \CG \qquad (g,h) \mapsto gh
\]
\item A smooth map called \emph{inverse}
\[
\i \colon \CG \to \CG 
\]
\end{itemize}
These morphisms are required to satisfy the following properties:
\begin{enumerate}[(G1)]
\item Compatibility of source and target with unit:
\[
\forall x \in M \qquad s(\u(x)) = t(\u(x)) = x
\]
\item Compatibility of source and target with multiplication:
\[
\forall (g,h) \in \CG \fiber{\s}{\t} \CG \qquad \s(m(g,h) = s(h) \quad \t( m(g ,h) = t(g)  
\]
\item Compatibility of source and target with inverse:
\[ \forall g \in \CG \qquad \s(\i(g)) = \t(g) \]
\item Left and right unit laws:
\[\forall g \in \CG \qquad m( \u(\t(g)), g) =  g = \m(g, \u(\s(g)))   \]
\item Left and right inverse laws:
\[ \forall g \in \CG \qquad \m(g, \i(g)) = \u(\t(g)) \quad \m(\i(g),g) = \u(\s(g))  \]
\item Associativity law:
\[ \forall (g,h,k) \in \CG \fiber{\s}{t} \CG \fiber{\s}{\t} \CG \qquad m(g,m(h,k)) = m(m(g,h),k)  \]
\end{enumerate}
We say that such a groupoid \( \CG \grpd X  \) is a \emph{singular Lie groupoid} if \( X \) is a smooth manifold. 
We say that \( \CG \grpd X \) is a \emph{Lie groupoid} if both \( \CG \) and \( X \) are manifolds.
\end{definition}
It should be noted that the condition that the source and target maps be \( \QUED \)-submersions is crucial. For example, this property ensures that the spaces of composable arrows are also objects in \( \QUED \).

Singular Lie groupoids are precisely the class of diffeological groupoids that we intend to differentiate to Lie algebroids.
\begin{definition}
Suppose \( \CG \grpd X \) and \( \CH \grpd Y \) are \( \QUED \)-groupoids. A \emph{groupoid homomorphism} \( F \colon \CG \to \CH \) covering \( f \colon X \to Y \) is a pair of smooth maps with the properties:
\begin{enumerate}
    \item Compatibility with source and target:
    \[ \forall g \in \CG \qquad \s(F(g)) = f(s(g)) \quad \t(F(g)) = f(t(g)) \]
    \item Compatibility with multiplication:
    \[ \forall (g,h) \in \CG \fiber{\s}{\t} \CG \qquad F(m(g,h)) = m(F(g),F(h))  \]
\end{enumerate}
\end{definition}
\begin{example}
Since a \( \QUED\)-submersion between smooth manifolds is an ordinary submersion, this definition of a Lie groupoid corresponds exactly to the classical definition. 
\end{example}
\begin{example}
Suppose \( G \) is a Lie group and \( K \) is a totally disconnected normal subgroup of \( G \).
According to Lemma~\ref{lemma:homogeneous.space} we know \( G/K \) is a quasi-étale diffeological space and therefore \( G/K\) is a singular Lie groupoid.
\end{example}
\begin{example}\label{example:singular.tr}
Consider the tangent bundle \( T \RR \) and think of it as a bundle of abelian groups over \( \RR \). Let us define an action of \( \ZZ \) on \( T \RR \) by the following rule:
\[ z \in \ZZ, \  (v,t) \in T\RR  \qquad     z \cdot (v,t)  := (v + tz, t)  \]
where the second coordinate is the base point and the first coordinate is the vector component. The orbit space of this action \( T \RR/\ZZ \) is quasi-étale so it is canonically a singular Lie groupoid over \( \RR \).
This example illustrates how quasi-étale groupoids are able to handle the ``transverse obstruction'' to integrability.
\end{example}
\subsection{Local groupoids}
Local groupoids are a weaker form of a groupoid which has a more restrictive product operation. Essentially, we do not require that multiplication be defined for all ``composable'' pairs. The ``local'' part of the definition of a local groupoid refers to the fact that multiplication should be defined in an open neighborhood of the units.
\begin{definition}
A \emph{local \(\QUED\)-groupoid} consists of a pair of quasi-étale spaces \( \CG \) and \( X \) called the \emph{arrows} and \emph{objects} together with:
\begin{itemize}
\item two \(\QUED\)-submersions called the \emph{source} and \emph{target}:
\[ \s \colon \CG \to X \qquad  \t \colon \CG \to X \]
\item a smooth map called the \emph{unit}
\[ \u \colon X \to \CG  \qquad x \mapsto 1_x  \]
\item an open neighborhood \( \CM \subset \CG \fiber{\s}{t} \CG \) of \( \u(M) \times \u(M)\) and smooth map called \emph{multiplication}
\[ 
\m \colon \CM \to \CG \qquad (g,h) \mapsto gh
\]
\item An open neighborhood \( \CI \subset \CG\) of \( u(M) \) and a smooth map called \emph{inverse}
\[ 
\i \colon \CI \to \CG  \qquad g \mapsto g\inv
\]
\end{itemize}
We further require that the following properties hold:
\begin{enumerate}[(LG1)]
\item Compatibility of source and target with unit:
\[
\forall x \in M, \qquad s(\u(x)) = t(\u(x)) = x
\]
\item Compatibility of source and target with multiplication:
\[
\forall (g,h) \in \CM, \qquad  \s(m(g,h) = s(h) \quad \t( m(g ,h) = t(g)  
\]
\item Compatibility of source and target with inverse:
\[ \forall g \in \CI, \qquad \s(\i(g)) = \t(g) \]
\item Left and right unit laws: \( \forall g \in \CG \)
\[ \forall g \in \CG \qquad m( \u(\t(g)), g) =  g = \m(g, \u(\s(g)))   \]
whenever the above expression is well-defined 
\item Left and right inverse laws:
\[ \forall g \in \CG, \qquad  \m(g, \i(g)) = \u(\t(g)) \quad \m(\i(g),g) = \u(\s(g))  \]
whenever the above expression is well-defined
\item Associativity law:
\[ \forall (g,h,k), \in \CG \fiber{\s}{t} \CG \fiber{\s}{\t} \CG \qquad m(g,m(h,k)) = m(m(g,h),k)  \]
whenever the above expression is well-defined.
\end{enumerate}
A local \( \QUED\)-groupoid \( \CG \grpd X \) is said to be a \emph{singular local Lie groupoid} if \( X \) is a smooth manifold. It is a \emph{local Lie groupoid} if both \( \CG \) and \( X \) are manifolds.
\end{definition}
Apart from the fact that multiplication is defined only on an open subset, for our purposes, local groupoids will have a distinction in how their morphisms are defined.

Let us establish some notation. Given a smooth function of diffeological spaces \( f \colon X \to Y \) and a pair of subsets \( S \subset X \) and \( T \subset Y \) such that \( f(S) \subset T \) we write:
\[ [f]_S \colon [X]_S \to [Y]_T \]
to denote the class of \( f \) as a germ of a map from \( X \) to \( Y\) defined in an open neighborhood of \( S \). The expressions \( [X]_S\) and \( [Y]_T\) denote germs of diffeological spaces.
\begin{definition}\label{definition:local.groupoid.morphism}
Suppose \( \CG \grpd X \) and \( \CH \grpd Y \) are local \( \QUED\)-groupoids. A \emph{morphism} \( \CG \to \CH \) covering \( f \colon X \to Y \) consists of a \emph{germ} of a smooth function:
\[ [\CF]_{u(M)} \colon [\CG]_{u(M)} \to [\CH]_{u(N)} \]
such that there exists a representative of the germ \( \CF \) which satisfies the following properties:
\begin{enumerate}
    \item Compatibility with source and target:
    \[ \forall g \in \CG \qquad \s(\CF(g)) = f(s(g)) \quad \t(\CF(g)) = f(t(g)) \]
    whenever the above expressions are well-defined.
    \item Compatibility with multiplication:
    \[ \forall (g,h) \in \CG \fiber{\s}{\t} \CG \qquad \CF(m(g,h)) = m(\CF(g),\CF(h))  \]
    whenever the above expression is well-defined.
\end{enumerate}
\end{definition}
It is built-in to our notation that \( \CF (u(M)) \subset u(N) \). In other words, a morphism of local \( \QUED\)-groupoids always maps units to units by definition.

A \( \QUED \)-groupoid can also be thought of as a local \( \QUED \)-groupoid. However, we remark that the functor from \( \QUED\)-groupoids to local \( \QUED \)-groupoids is not full or faithful. In other words, \( \QUED \)-groupoids do not form a subcategory of their local counterparts.

In order to reduce the potential of confusion, we will always use a symbol such as \( \CF \) to denote an actual smooth map compatible with the structure maps and \( [\CF] \) to denote the germ of such a map around the units. 
This is a mild abuse of notation since we should formally use \( [\CF]_{\u(M)} \) but we will omit the repetitive subscripts to reduce notational clutter.
    
Local Lie groupoids are particularly relevant to the integration problem due to the following theorem.
\begin{theorem}[\cite{crainic_integrability_2003},\cite{cabrera_local_2020}]\label{theorem.local.lie.groupoids.vs.algebroids}
The Lie functor for local groupoids defines an equivalence of categories:
\[ \Lie \colon \{\mbox{Local Lie groupoids}\} \to \mbox \{ \mbox{Lie algebroids}\} \]
\end{theorem}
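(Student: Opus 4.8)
The plan is to establish that \( \Lie \) is well-defined on objects and (germ-)morphisms, essentially surjective, and fully faithful. On objects: given a local Lie groupoid \( \CG \grpd M \), one sets \( A := \Ker(d\s)|_{\u(M)} \), with anchor \( d\t|_A \) and bracket inherited from the Lie bracket of right-invariant vector fields on \( \CG \); right translation is defined only near the units, but this already suffices to produce right-invariant vector fields in a neighbourhood of \( \u(M) \) and hence a bracket on sections of \( A \), and the local associativity axiom is precisely what forces the Jacobi identity. A germ of morphism \( [\CF]_{\u(M)} \) differentiates along the units to a vector bundle map \( A \to B \), which the compatibility of \( \CF \) with source, target, and multiplication forces to be a Lie algebroid morphism; functoriality is immediate, and on honest Lie groupoids this reproduces the classical construction.

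For essential surjectivity one must prove the \emph{local} version of Lie's third theorem: every Lie algebroid \( A \to M \) admits a local integration. I would argue through the path-space model. Let \( P(A) \) be the Banach manifold of \( \CA \)-paths and \( \Pi_1(A) = P(A)/{\sim} \) the Weinstein groupoid. Following Crainic--Fernandes, one shows that a neighbourhood of the constant \( \CA \)-paths inside \( \Pi_1(A) \) is a smooth manifold on which source, target, inversion, and a partially defined multiplication are smooth, i.e. a local Lie groupoid, and that a direct computation identifies its Lie algebroid with \( A \); alternatively one can avoid infinite dimensions and write down explicit local exponential coordinates, in the style of the cited work of Cabrera et al. I expect this step to be the main obstacle: near the units there is no monodromy obstruction, but establishing smoothness of the germ of \( \Pi_1(A) \) at the constant paths still requires the delicate transversality analysis of the \( \CA \)-path space (or an equally delicate direct construction of the local model).

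For faithfulness, one shows that a local groupoid morphism is determined near the units by its linearization \( d\CF|_A \): in fibred exponential coordinates adapted to the source fibration every arrow near a unit has the form \( \exp(\xi) \) for a local section \( \xi \) of \( A \), and compatibility with multiplication together with the prescribed values on \( \u(M) \) propagates the value of \( \CF \) from its first-order jet along the units, so two germs with equal derivative agree. For fullness, given \( \phi \colon A \to B \) over \( f \colon M \to N \), form the graph \( \Gamma := \{ (a, \phi(a)) : a \in A \} \), a Lie subalgebroid of the product \( A \times B \) lying over the graph of \( f \) in \( M \times N \); by essential surjectivity \( A \times B \) has a local integration, inside which \( \Gamma \) exponentiates to a local Lie subgroupoid, and since this subgroupoid projects fibrewise isomorphically onto a local integration of \( A \) it is the graph of a local groupoid morphism integrating \( \phi \). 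Together these three points give the claimed equivalence, with essential surjectivity carrying the bulk of the work and the faithfulness step depending on the good local normal form that the same analysis supplies.
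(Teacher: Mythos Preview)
The paper does not prove this theorem; it is quoted as a known result, with the text immediately following the statement explaining that local integrability is due to Crainic and Fernandes and that the full equivalence of categories was folklore until a complete proof appeared in the cited work of Cabrera, M\u{a}rcu\c{t}, and Salazar. So there is no in-paper proof to compare against.

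That said, your outline is a reasonable sketch of how the cited references actually proceed, and it identifies the correct division of labour: essential surjectivity is the substantial step (local Lie III), while fullness and faithfulness are comparatively formal. A few remarks on the sketch itself. For essential surjectivity, both routes you mention are viable and correspond to the two cited sources: the path-space argument is Crainic--Fernandes, while the explicit spray/tubular-neighbourhood construction is closer to Cabrera et al. For fullness, the graph argument you propose requires knowing that a Lie subalgebroid of a locally integrable algebroid integrates to a local Lie subgroupoid; this is true but is itself a nontrivial step that you should flag rather than absorb silently. For faithfulness, the phrase ``propagates the value of \( \CF \) from its first-order jet'' is doing a lot of work: the clean argument is that two local morphisms with the same differential along the units must agree on a neighbourhood of the units because their difference (measured via division) is a map into the units that is constant along source fibres, which is essentially the mechanism the present paper uses repeatedly in Section~\ref{section:proof.of.main.theorem}. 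Your exponential-coordinate heuristic is correct in spirit but would need the local normal form you allude to in order to be made rigorous.
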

The first proof of local integrability of Lie algebroids appears in Crainic and Fernandes~\cite{crainic_integrability_2003}. However, that the relationship is actually an equivalence of categories was, to some extent, folklore. Later, a complete proof of this equivalence appeared in \cite{cabrera_local_2020} and this is the earliest such proof that we are aware of.

This theorem means that we will not need to deal with Lie algebroids directly to define differentiation.
Instead, our strategy will be to construct a functor from singular Lie groupoids to local Lie groupoids.
The only other feature of Lie algebroids that we must keep in mind is that Lie algebroids are a \emph{sheaf}.
In other words, Lie algebroids can be constructed by gluing together Lie algebroids defined over an open cover together with coherent gluing data on the intersections. Since this functor is an equivalence, Local Lie groupoids are also a sheaf.

\section{Differentiation}\label{section:differentiation}
The key observation that we need for our differentiation procedure is that a quasi-étale chart (around an identity element) on a local singular Lie groupoid inherits a unique local groupoid structure compatible with the chart. In this sense, local Lie groupoids can be use to ``desingularize'' singular Lie groupoids.

This section will be organized as follows. In the first subsection we will state our basic theorems about quasi-étale charts on local singular Lie groupoids. In the next section we will show how these theorems can be used to construct a differentiation functor. In the last section we will use some of this theory to classify singular Lie groupoids with integrable algebroids.
\subsection{Representing singular Lie groupoids}
Our method of differentiating a singular Lie groupoid involves locally ``representing'' the said groupoid with a local Lie groupoid.
\begin{definition}\label{definition:local.lie.groupoid.chart}
Suppose \( \CG \grpd M \) is a local singular Lie groupoid. A \emph{local Lie groupoid chart} of \(\CG \grpd M \) consists of an open subset \( \til M \subset M \) together with a quasi-étale chart \( \pi \colon \til \CG \to \CG \) such that \( [\pi] \) constitutes a local groupoid morphism covering the inclusion:
\[
\begin{tikzcd}[column sep = large]
    \til \CG \arrow[r, "{[\pi]}" ]\darrow & \CG \darrow \\
    \til M \arrow[r, hook] & M
\end{tikzcd} 
\]
We say a local groupoid chart is \emph{around \( x \in M \)} if \( x \in \til M\). We say it is \emph{wide} if \( \til M = M \).
\end{definition}
The following two theorems are the main technical results that permit us to differentiate (local) singular Lie groupoids. The first is an existence and uniqueness result for local groupoid charts. The second theorem says that one can always represent a homomorphism of local singular Lie groupoids using local Lie groupoid charts.
\begin{theorem}\label{theorem:existence.and.uniqueness.of.LGC}
Suppose \( \CG \grpd M \) is a local singular Lie groupoid. There exists a wide local Lie groupoid chart \( \pi \colon \til \CG \to \CG \). Furthermore, if \( \pi' \colon \til \CG ' \to \CG \) is another wide local Lie groupoid chart there exists a unique isomorphism of local Lie groupoids \( [\CF ] \colon \til \CG \to \til \CG' \) which makes the following diagram commute:
\[\begin{tikzcd}
\til \CG \arrow[dr, "{[\pi]}", swap] \arrow[rr, "{[\CF]}"] & & {\til \CG' }\arrow[dl, "{[\pi']}"] \\
 & {[\CG] }&
\end{tikzcd}
\]
\end{theorem}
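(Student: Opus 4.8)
The plan is to construct the wide local Lie groupoid chart by pasting together the structure maps from a single quasi-étale chart on $\CG$, using the fact that $M$ already sits inside $\CG$ as the unit manifold. First I would pick a quasi-étale chart $\pi \colon \til\CG \to \CG$ whose image contains $\u(M)$; shrinking if necessary I can arrange (using that $\pi$ is a local subduction together with the fact that $\u \colon M \to \CG$ is a section-like smooth map, so that $M$ is a manifold sitting inside $\CG$) that $\til\CG$ carries a distinguished embedded copy $\til M \cong M$ with $\pi|_{\til M}$ a diffeomorphism onto $\u(M)$. So we may take $\til M = M$. Now I would transport the structure maps of $\CG$ to $\til\CG$ germwise: the source and target are the compositions $\s\circ\pi$, $\t\circ\pi$ (these are submersions into $M$ by Proposition~\ref{proposition:local presentations.of.morphisms}(b), since $\s,\t$ are $\QUED$-submersions hence local subductions); the unit is the chosen inclusion $M \into \til\CG$; the inverse and multiplication are obtained by lifting $\i\circ\pi$ and $\m\circ(\pi\times\pi)$ through $\pi$ near the units, which is possible because $\pi$ is a local subduction and, crucially, because near the units both $\CG\fiber{\s}{\t}\CG$ and $\til\CG\fiber{\s}{\t}\til\CG$ are manifolds (Proposition~\ref{prop:fiber.product.chart} applied to the $\QUED$-submersions $\s,\t$ over the manifold $M$, which gives $\til\CG\fiber{\s}{\t}\til\CG$ as a quasi-étale chart on $\CG\fiber{\s}{\t}\CG$ — in fact a manifold since $\s\circ\pi$ is a submersion). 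The diffeological axioms (LG1)--(LG6) for $\til\CG$ then follow from those of $\CG$ by the germ-level injectivity of $\pi$: any identity among these lifted maps, when composed with $\pi$, reduces to the corresponding identity in $\CG$, and since $\pi$ restricted to a small neighborhood of the unit section is a local subduction onto its image with totally disconnected fibers, two lifts agreeing after composition with $\pi$ must agree (this is exactly the ``fiber hopping'' rigidity of Lemma~\ref{lemma:fiber.hopping} and (QE3)).

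For uniqueness, suppose $\pi' \colon \til\CG' \to \CG$ is another wide local Lie groupoid chart. I would apply Lemma~\ref{lemma:representations.exist} (or directly the local subduction property of $\pi'$) to lift the identity map $\CG \to \CG$ through $\pi'$: since $\pi \colon \til\CG \to \CG$ and $\pi' \colon \til\CG' \to \CG$ are both quasi-étale charts and, by Lemma~\ref{lemma:finding.quasi.étale.charts}-type reasoning, mutually dominate each other near the unit sections, there exist germs of smooth maps $\CF \colon \til\CG \to \til\CG'$ and $\CG' \to \til\CG$ over the identity of $\CG$, with $\pi' \circ \CF = \pi$ and $\pi \circ \CF^{-1} = \pi'$. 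By Proposition~\ref{proposition:local presentations.of.morphisms}(a) (or part (b) applied both ways) $\CF$ is a local diffeomorphism near the units, and it carries the unit section of $\til\CG$ to that of $\til\CG'$ by construction. That $\CF$ intertwines the multiplications is forced: $\pi'\circ\CF\circ\m = \pi\circ\m = \m_\CG\circ(\pi\times\pi) = \m_\CG\circ(\pi'\circ\CF \times \pi'\circ\CF) = \pi' \circ \m'\circ(\CF\times\CF)$, and now the germ-rigidity of $\pi'$ at the units (totally disconnected fibers plus (QE3)) upgrades this to $\CF\circ\m = \m'\circ(\CF\times\CF)$ as germs. Uniqueness of $\CF$ itself follows the same way: if $\CF_1,\CF_2$ both satisfy $\pi'\circ\CF_i = \pi$ and restrict to the identity-compatible map on units, then $\CF_2\circ\CF_1^{-1}$ is an endomorphism of the quasi-étale chart $\pi'$ fixing the unit section, hence by Lemma~\ref{lemma:quasi.étale.alternate}(3) a local diffeomorphism, and being a lift of the identity with a fixed point it must be the identity germ.

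I expect the main obstacle to be the \emph{existence} half, specifically verifying that multiplication lifts to a \emph{single} well-defined germ $\m \colon [\til\CG\fiber{\s}{\t}\til\CG] \to [\til\CG]$ rather than a collection of incompatible local lifts. The subtlety is that lifting $\m\circ(\pi\times\pi)$ through the local subduction $\pi$ produces, a priori, only local lifts near each point of the unit section, and one must glue them; the gluing is governed by (QE3) — two lifts of the same map through $\pi$ differ by a local diffeomorphism of $\til\CG$ over $\CG$, and one must check these transition diffeomorphisms are trivial on an overlap containing the relevant unit points. This is where the hypothesis that $M$ is a \emph{manifold} (so that $\u(M)$ is genuinely embedded and $\pi$ is injective on a neighborhood of the unit section modulo its totally disconnected fibers) does the real work, and this is essentially the content deferred to Section~\ref{section:proof.of.main.theorem}; the rest of the verification is the bookkeeping of checking (LG1)--(LG6) pass to $\til\CG$, which is routine given the rigidity lemmas.
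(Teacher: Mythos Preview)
Your overall strategy---lift the structure maps through the quasi-\'etale chart $\pi$ and verify the local groupoid axioms by pushing down to $\CG$---is correct in spirit and matches the paper's approach. But there is a genuine gap at the step you flag as routine, and it is precisely where the paper does most of its work.

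You write that the axioms (LG1)--(LG6) for $\til\CG$ follow because ``two lifts agreeing after composition with $\pi$ must agree,'' citing Lemma~\ref{lemma:fiber.hopping} and (QE3). This is false as stated: totally disconnected fibers do \emph{not} give uniqueness of lifts. If $\alpha,\beta\colon \CU^{(n)}\to\til\CG$ satisfy $\pi\circ\alpha=\pi\circ\beta$, they can still differ by a nontrivial fiber-preserving local diffeomorphism---exactly what (QE3) produces, not rules out. What the paper proves (Lemmas~\ref{lemma:failure.function} and~\ref{lemma:equality.test}) is that if additionally $\alpha$ and $\beta$ agree on the unit section and satisfy $\til t\circ\alpha=\til t\circ\pr_1$, then they agree on a neighborhood of the units. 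The argument is not formal: one lifts the \emph{division} map first to get a $\til\delta$ (Lemma~\ref{lemma:delta.exists}), shows $\til\delta(g,h)=\til u(\til t(g))\Leftrightarrow g=h$ on a small set (Lemma~\ref{lemma:division.lemma}), and then uses a rank/kernel-distribution comparison to show the ``failure function'' $\til\delta(\alpha,\beta)$ collapses onto the unit section. Your appeal to fiber rigidity skips all of this; without it, the verification of associativity, the unit laws, and the compatibility of lifted multiplication simply does not go through.

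There is a second gap in existence: you begin by choosing a single quasi-\'etale chart whose image contains all of $\u(M)$. Nothing in the definition of QUED guarantees this. The paper instead proves local existence around each $x_0\in M$ (Proposition~\ref{prop:existence.of.local.local.groupoid.charts}), proves local uniqueness of the resulting chart (Lemmas~\ref{lemma:isomorphism.exists}, \ref{lemma:isomorphism.is.unique}), and then glues using the fact that local Lie groupoids form a sheaf over $M$ (via the equivalence with Lie algebroids, Theorem~\ref{theorem.local.lie.groupoids.vs.algebroids}). Your last paragraph gestures at the gluing difficulty but misidentifies the mechanism that resolves it.

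Your uniqueness sketch is closer to correct, but the final step---``being a lift of the identity with a fixed point it must be the identity germ''---again needs the equality test, not just Lemma~\ref{lemma:quasi.étale.alternate}(3), which only tells you the map is a local diffeomorphism.
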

\begin{theorem}\label{theorem:existence.and.uniqueness.of.lifts}
Let \( \CG \grpd M \) and \( \CH \grpd N \) be local singular Lie groupoids.
Suppose \( \CF \colon \CG \to \CH \) is homomorphism and \( \pi_\CG \colon \til \CG \to \CG \) and \( \pi_\CH \colon \til \CH \to \CH \) are wide local groupoid charts. Then there exists a unique local groupoid morphism \( [\til \CF ] \colon \til \CG \to \til \CH \) which makes the following diagram commute:
\[
\begin{tikzcd}
\til \CG \arrow[r, "{[\til \CF]}"] \arrow[d, "{[\pi_{\CG}]}", swap] & \til \CH \arrow[d, "{[\pi_{\CH}]}"] \\
\CG \arrow[r, "{[\CF]}"] & \CH
\end{tikzcd}
\]
\end{theorem}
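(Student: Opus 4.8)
The plan is to construct the germ $[\til\CF]$ by lifting the smooth map $\CF\circ\pi_\CG\colon\til\CG\to\CH$ through the quasi-étale chart $\pi_\CH\colon\til\CH\to\CH$. Since $\pi_\CH$ is a local subduction and $\til\CG$ is a manifold, such lifts always exist locally; the real content of the theorem is that the local lifts assemble into a \emph{single} local groupoid morphism and that this morphism is unique. I would isolate at the outset the following \emph{rigidity principle}, which is where the quasi-étale hypothesis earns its keep: if $\theta$ is a smooth self-map of $\til\CH$ defined near the unit manifold $\u(N)$ with $\pi_\CH\circ\theta=\pi_\CH$ and $\theta|_{\u(N)}=\Id$, then $\theta$ is the identity germ along $\u(N)$ (and, in a companion form, two lifts through $\pi_\CH$ of a fixed smooth map into $\CH$ that agree on a unit manifold coincide near it). By (QE3) such a $\theta$ is a local diffeomorphism, and since $\s_\CH\circ\pi_\CH=\s_{\til\CH}$ and likewise for $\t$ it preserves every source and target fibre of $\til\CH$; on a source fibre of the local Lie groupoid $\til\CH$ — which near a unit is connected — it fixes the unit and commutes with $\pi_\CH$, and one concludes it is the identity, either by invoking the uniqueness clause of Theorem~\ref{theorem:existence.and.uniqueness.of.LGC} or by running the connectedness of the fibre against the total disconnectedness of the fibres of $\pi_\CH$.

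Granting the rigidity principle, I would argue as follows. Write $f\colon M\to N$ for the map covered by $\CF$. Because $\CF$ is a morphism of local groupoids it sends units to units, and compatibility with the source forces $\CF\circ\u_\CG=\u_\CH\circ f$; since $[\pi_\CG]$ and $[\pi_\CH]$ are local groupoid morphisms this means that over $\u(M)\subset\til\CG$ the map $\CF\circ\pi_\CG$ carries the distinguished lift $\u_{\til\CH}\circ f$. Using that $\pi_\CH$ is a local subduction and $\til\CG$ a manifold, I would extend this to a smooth lift $\til\CF$ of $\CF\circ\pi_\CG$ on a neighbourhood of $\u(M)$; patching the local lifts is legitimate since any two lifts agreeing on $\u(M)$ fall under the rigidity principle and hence agree. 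Compatibility of $\til\CF$ with source and target is then automatic: $\s_{\til\CH}\circ\til\CF=\s_\CH\circ\pi_\CH\circ\til\CF=\s_\CH\circ\CF\circ\pi_\CG=f\circ\s_\CG\circ\pi_\CG=f\circ\s_{\til\CG}$, and identically for $\t$.

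For multiplicativity, I would compare the two smooth maps $\til\CF\circ\m_{\til\CG}$ and $\m_{\til\CH}\circ(\til\CF\times\til\CF)$, both defined on a neighbourhood of the units inside $\til\CG\fiber{\s}{\t}\til\CG$ — a manifold, because $\s_{\til\CG}$ is a submersion. Since $\CF$ is a homomorphism and $[\pi_\CG],[\pi_\CH]$ are local groupoid morphisms, a direct computation shows these two maps have the same composite with $\pi_\CH$, and both restrict to $\u_{\til\CH}\circ f$ on the unit manifold; the rigidity principle then forces them to have the same germ along the units, which is exactly multiplicativity of $[\til\CF]$. Uniqueness runs on the same mechanism: any local groupoid morphism lifting $\CF$ preserves units and covers $f$, hence restricts to $\u_{\til\CH}\circ f$ on $\u(M)$ and lifts $\CF\circ\pi_\CG$, so two of them are compared by the rigidity principle. (Alternatively, since the Lie functor of Theorem~\ref{theorem.local.lie.groupoids.vs.algebroids} is faithful it is enough to note that two such lifts induce the same morphism $\Lie(\til\CG)\to\Lie(\til\CH)$, which depends only on the $1$-jet of the lift along $\u(M)$, and that jet is pinned down by the equation $\pi_\CH\circ\til\CF=\CF\circ\pi_\CG$ using that a quasi-étale chart has injective differential — a consequence of (QE3).)

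The main obstacle is precisely the rigidity principle, together with the fact that it is genuinely \emph{false} for arbitrary quasi-étale maps: Example~\ref{example:non.existant.fiber.product} already shows two lifts of the same plot through a quasi-étale map need not agree even when they agree at a point, so the groupoid structure must be used in an essential way. The technical heart, to be carried out in Section~\ref{section:proof.of.main.theorem}, is to promote the heuristic ``lifts are rigid along source fibres because those fibres are connected while the fibres of $\pi_\CH$ are totally disconnected'' into an argument producing a lift that is simultaneously well defined on a full neighbourhood of the units (not merely fibrewise over the source) and compatible with multiplication; I expect this to require careful bookkeeping with local Lie groupoid charts of the composable-arrow spaces, of the sort set up in Proposition~\ref{prop:fiber.product.chart}.
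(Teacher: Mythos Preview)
Your approach is essentially the paper's: the ``rigidity principle'' you isolate is exactly the equality test of Lemma~\ref{lemma:equality.test} and its relative version Lemma~\ref{lemma:relative.equality.test}, and the paper uses it in precisely the way you describe for both multiplicativity and uniqueness.

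There is, however, one step you gloss over that the paper handles with a concrete trick. You write that once you have the distinguished lift $\u_{\til\CH}\circ f$ on $\u(M)$, you would ``extend this to a smooth lift $\til\CF$ on a neighbourhood of $\u(M)$'' and then patch, ``since any two lifts agreeing on $\u(M)$ fall under the rigidity principle.'' But a local lift of $\CF\circ\pi_\CG$ obtained from the local-subduction property at a single unit $\til\u(x_0)$ need \emph{not} send nearby units to units, so you cannot assume your local lifts agree on $\u(M)$ in the first place. The obstruction is visible already in Example~\ref{example:singular.tr}: with $\til\CH=T\RR$ and $\CH=T\RR/\ZZ$, the preimage of the unit section under $\pi_\CH$ is the spiraling set $\{(nt,t):n\in\ZZ\}$, and a lift sending $(0,0)$ to the unit can perfectly well send $(0,t)$ to $(t,t)$ for $t$ near $0$, which is not a unit. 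Your patching hypothesis therefore fails.

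The paper's remedy is algebraic rather than geometric: take \emph{any} local lift $\overline\CF$ with $\overline\CF(\til\u(x_0))=\til\u(f(x_0))$, and replace it by
\[
\til\CF(g)\;:=\;\overline\CF(g)\cdot\overline\CF\bigl(\til\u(\til s(g))\bigr)^{-1},
\]
using the local groupoid multiplication on $\til\CH$. This forces $\til\CF(\til\u(x))=\til\u(f(x))$ for all $x$ near $x_0$. After this correction your rigidity argument goes through exactly as you outline, and uniqueness then lets you glue the local constructions via the sheaf property of local groupoid morphisms. The same division-by-value-at-units manoeuvre is what drives the proofs of Lemma~\ref{lemma:fix.units} and Lemma~\ref{lemma:isomorphism.exists}, so once you have absorbed it the whole section becomes a single idea applied repeatedly.
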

The idea of the proof is that, one starts with an arbitrary quasi-étale chart on \( \pi \colon \til \CG \to \CG \) and then constructs a local groupoid structure on \( \til \CG \) by finding representations of all of the groupoid structure maps.
Once these maps are constructed, one then needs to show that these representatives satisfy the groupoid axioms in a neighborhood of the identity.

The full proofs of Theorem~\ref{theorem:existence.and.uniqueness.of.LGC} and Theorem~\ref{theorem:existence.and.uniqueness.of.lifts} require their own section as well as some further technical development. Therefore, for the sake of exposition, we have moved these proofs to Section~\ref{section:proof.of.main.theorem}.

For now, let us explore some of the consequences of these theorems.
One interesting consequence is a classification of all connected singular Lie groups.
\begin{theorem}\label{theorem:Weinstein groups are quotients of lie groups}
Suppose \( G  \) is a connected singular Lie group. In other words, \( G \) is a singular Lie groupoid over a point. Then as a diffeological group, \( G \) is isomorphic to a quotient of a Lie group modulo a totally disconnected normal subgroup.
\end{theorem}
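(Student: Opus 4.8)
The plan is to apply Theorem~\ref{theorem:existence.and.uniqueness.of.LGC} to the local singular Lie groupoid $G \grpd \{*\}$. Since $G$ is a singular Lie group, it is in particular a local singular Lie groupoid over a point, so there exists a wide local Lie groupoid chart $\pi \colon \til G \to G$, where $\til G$ is a local Lie group (a local Lie groupoid over a point). First I would observe that a local Lie group is, up to the germ around the identity, the restriction of a genuine Lie group: by Theorem~\ref{theorem.local.lie.groupoids.vs.algebroids} the local Lie group $\til G$ has a Lie algebra $\Fg = \hat\Lie(\til G)$, and by Lie's third theorem in the classical setting there is a (unique simply connected) Lie group $H$ integrating $\Fg$; the equivalence of categories then identifies the germ of $\til G$ at its unit with the germ of $H$ at its unit. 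So after replacing $\til G$ by (a neighborhood of the identity in) $H$ we may assume $\pi$ is defined on an open neighborhood $\CO$ of $1_H$ in an honest Lie group $H$, and $[\pi] \colon [H]_{1_H} \to [G]_{1_G}$ is a local group morphism.

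Next I would promote this germ-level local covering to a genuine group homomorphism. Since $G$ is connected, it is generated as a group by any neighborhood of its identity; the idea is to extend $\pi$ to all of $H$ (after possibly passing to the simply connected cover of $H$, which I will now call $H$) by using the local homomorphism property to define $\Pi \colon H \to G$ and checking it is well-defined and smooth. Here I would use the standard ``monodromy''-type argument: a local group morphism out of a simply connected Lie group extends uniquely to a global group homomorphism. Smoothness is local near each point and follows because, near a point $h_0 \in H$, the map $\Pi$ is $g \mapsto \Pi(h_0)\cdot \pi(h_0\inv g)$ composed with translations — the translation on the $G$ side is smooth because multiplication in a $\QUED$-groupoid is smooth. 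One then checks $\Pi$ is surjective: its image is a subgroup of $G$ containing a neighborhood of $1_G$, hence is open, and a connected group has no proper open subgroup, so $\Pi$ is onto.

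Then I would analyze $K := \Ker \Pi$. Because $\pi$ is quasi-étale, its fibers are totally disconnected (QE2); the fiber of $\Pi$ over $1_G$ is exactly $K$, so $K$ is a totally disconnected subgroup, and being a kernel it is normal. It remains to see that the induced map $H/K \to G$ is a \emph{diffeomorphism}, i.e.\ that $G$ carries exactly the quotient diffeology. That $\Pi$ is a subduction follows from (QE1): $\pi$ is a local subduction, hence so is $\Pi$ near each point by translating, and a surjective local subduction is a subduction. Therefore the quotient diffeology on $H/K$ agrees with the diffeology on $G$, and $G \cong H/K$ as diffeological groups, with $K$ totally disconnected and normal.

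The main obstacle I expect is the passage from the germ-level statement supplied by Theorem~\ref{theorem:existence.and.uniqueness.of.LGC} to a global object: namely, upgrading the local Lie group chart $[\pi]$ to an honest group homomorphism $\Pi \colon H \to G$ defined on a global (simply connected) Lie group, and verifying its smoothness everywhere rather than just near the identity. This is where one must combine the classical integration of local Lie groups into Lie groups (via Theorem~\ref{theorem.local.lie.groupoids.vs.algebroids} and Lie's third theorem for Lie algebras), the monodromy extension argument using simple connectedness, and the smoothness bootstrapping via the smoothness of multiplication in $G$. Once $\Pi$ is in hand, identifying $G$ with $H/K$ is comparatively routine, using only (QE1) and (QE2).
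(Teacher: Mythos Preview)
Your proposal is correct and follows essentially the same route as the paper: apply Theorem~\ref{theorem:existence.and.uniqueness.of.LGC} to get a local Lie group chart, realize this local Lie group as a neighborhood of the identity in a simply connected Lie group $H$, extend the local homomorphism to a global one $\Pi\colon H\to G$, and read off that $G\cong H/\Ker\Pi$ with totally disconnected kernel. The paper cites Mal'cev/Fernandes--Michiels (associative completions) for the extension step where you invoke a monodromy argument, but these amount to the same thing. One small point to tighten: when you argue $K=\Ker\Pi$ is totally disconnected, note that (QE2) only directly controls the fibers of the \emph{local} chart $\pi$, not of the global $\Pi$; you should invoke the translation argument (which you already use for (QE1)) to propagate the quasi-\'etale property to all of $H$ before concluding that $K$ is totally disconnected.
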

\begin{proof}
By Theorem~\ref{theorem:existence.and.uniqueness.of.LGC} we know that we can come up with a local Lie group \( \til G^\circ   \) together with quasi-étale chart \( \pi \colon \til G \to G \) which is a homomorphism of local groups.

By possibly shrinking \( \til G^\circ \) to a small enough open neighborhood of the identity, we can assume without loss of generality that \( \til G^\circ \) is an open subset of \( \til G \) where \( \til G \) is a simply connected Lie group.

Since the group \( \til G^\circ \) generates \( \til G \), it is possible\footnote{One way to prove this is by considering the subgroup of \( \til G \times H \) generated by the graph of \( \pi \). This can also be seen by using the theory of associative completions developed by Malcev\cite{malcev_sur_1941}. See also Fernandes and Michiels\cite{fernandes_associativity_2020} for a more modern version.} to extend \( \pi \) uniquely to a homomorphism defined on all of \( \til G \) and so we have a continuous group homomorphism \( \pi \colon \til G \to G \). Since \( \pi \) is quasi-étale in a neighborhood of the identity, it follows by a simple translation argument that it must be quasi-étale everywhere.
Since the fibers of a quasi-étale map must be totally disconnected it follows that the kernel of \( \pi \) is a totally disconnected normal subgroup of \( \til G \). Finally, since \( \pi \) is open, the image of \( \pi \) is an open subgroup. Since we have assumed that \( G \) is connected, it follows that \( \pi \) is surjective.
\end{proof}
\subsection{Construction of the Lie functor}
We will now prove the main theorem about differentiating singular Lie groupoids. Let us establish some notation for the relevant categories:
 \[ \Alg := \{ \text{Category of Lie algebroids} \} \]
 \[ \LocLieGrpd := \{ \text{Category of local Lie groupoids} \} \]
 \[ \SingLocLieGrpd := \{ \text{Category of singular local Lie groupoids} \} \]

Let us first restate the main theorem from the introduction:
\begin{reptheorem}{theorem:main.lie.functor}
    Let \( \SingLieGrpd \) be the category of \( \QUED\)-groupoids where the space of objects is a smooth manifold. There exists a functor:
\[ \hat \Lie \colon \SingLieGrpd \to \LieAlg  \]
with the property that \( \hat \Lie |_{\LieGrpd} = \Lie \).
\end{reptheorem}
In fact, we will prove a stronger result than the one stated above. The full version is stated for local singular Lie groupoids and includes a claim that that says the functor is essentially ``unique''. 
 \begin{theorem}\label{theorem:main.lie.functor.fancy}
    There exists a functor 
    \[\hat \Lie \colon \SingLocLieGrpd \to \Alg  \]
    with the following two properties:
    \begin{enumerate}[(a)]
        \item For all wide local Lie groupoid charts \( \pi \colon \til \CG \to \CG\) we have that \( \hat \Lie (\pi) \) is an isomorphism.
        \item \( \hat \Lie  = \Lie \) when restricted to the subcategory of local Lie groupoids.
    \end{enumerate}
    Furthermore, such a functor is unique up to a natural isomorphism.
 \end{theorem}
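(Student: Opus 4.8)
The plan is to obtain $\hat\Lie$ by transporting the classical equivalence $\Lie\colon\LocLieGrpd\to\Alg$ of Theorem~\ref{theorem.local.lie.groupoids.vs.algebroids} across the local Lie groupoid charts supplied by Theorem~\ref{theorem:existence.and.uniqueness.of.LGC}. First I would use the axiom of choice to fix, for every $\CG\in\SingLocLieGrpd$, a wide local Lie groupoid chart $\pi_\CG\colon\til\CG\to\CG$, subject to the convention that $\pi_\CG=\Id_\CG$ whenever $\CG$ is already a local Lie groupoid (legitimate, since the identity of a manifold is \'etale, hence quasi-\'etale by Example~\ref{example:quasi-étale.manifolds}, and $[\Id]$ is trivially a local groupoid morphism covering $\Id_M$). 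Because a quasi-\'etale chart has manifold domain, each $\til\CG$ is a local Lie groupoid. I then set $\hat\Lie(\CG):=\Lie(\til\CG)$, and for a homomorphism $\CF\colon\CG\to\CH$ I let $[\til\CF]\colon\til\CG\to\til\CH$ be the unique lift along $\pi_\CG$ and $\pi_\CH$ furnished by Theorem~\ref{theorem:existence.and.uniqueness.of.lifts}, and set $\hat\Lie(\CF):=\Lie(\til\CF)$ (this depends only on the germ $[\til\CF]$, which is all $\Lie$ sees).

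Next I would check functoriality and property (b). The unique lift of $\Id_\CG$ along $\pi_\CG$ is $\Id_{\til\CG}$ by the uniqueness clause of Theorem~\ref{theorem:existence.and.uniqueness.of.lifts}, so $\hat\Lie(\Id_\CG)=\Id$. For a composite $\CF'\circ\CF$, the map $\til\CF'\circ\til\CF$ satisfies $\pi_{\CK}\circ(\til\CF'\circ\til\CF)=(\CF'\circ\CF)\circ\pi_\CG$, hence equals the unique lift $\widetilde{\CF'\circ\CF}$; applying the functor $\Lie$ gives $\hat\Lie(\CF'\circ\CF)=\hat\Lie(\CF')\circ\hat\Lie(\CF)$. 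Property (b) then holds on the nose by the identity-chart convention: for a local Lie groupoid $\CG$ one has $\til\CG=\CG$, so $\hat\Lie(\CG)=\Lie(\CG)$, and for a morphism of local Lie groupoids the unique lift along identity charts is the morphism itself, so $\hat\Lie(\CF)=\Lie(\CF)$.

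For property (a), let $\pi\colon\CG'\to\CG$ be an arbitrary wide local Lie groupoid chart; then $\CG'$ is a local Lie groupoid, so its chosen chart is $\Id_{\CG'}$ and $\hat\Lie(\pi)$ is, by definition, $\Lie$ of the unique lift $[\til\pi]\colon\CG'\to\til\CG$ with $\pi_\CG\circ[\til\pi]=\pi$. But $\pi$ and $\pi_\CG$ are two wide local Lie groupoid charts of $\CG$, so Theorem~\ref{theorem:existence.and.uniqueness.of.LGC} produces a unique isomorphism $\Phi\colon\CG'\to\til\CG$ of local Lie groupoids with $\pi_\CG\circ\Phi=\pi$; by uniqueness of lifts in Theorem~\ref{theorem:existence.and.uniqueness.of.lifts}, $[\til\pi]=\Phi$, and hence $\hat\Lie(\pi)=\Lie(\Phi)$ is an isomorphism, a functor carrying isomorphisms to isomorphisms.

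Finally, for uniqueness, suppose $\hat\Lie'$ also satisfies (a) and (b). Using the same chosen charts, define $\eta_\CG:=\hat\Lie'(\pi_\CG)$; this is a map $\Lie(\til\CG)=\hat\Lie(\CG)\to\hat\Lie'(\CG)$ since $\hat\Lie'(\til\CG)=\Lie(\til\CG)$ by (b) (as $\til\CG$ is a local Lie groupoid), and it is an isomorphism by (a). Naturality follows by applying $\hat\Lie'$ to the lifting identity $\pi_\CH\circ\til\CF=\CF\circ\pi_\CG$: since $\hat\Lie'(\til\CF)=\Lie(\til\CF)=\hat\Lie(\CF)$ by (b), this becomes $\eta_\CH\circ\hat\Lie(\CF)=\hat\Lie'(\CF)\circ\eta_\CG$, which is exactly the naturality square, so $\eta$ is a natural isomorphism $\hat\Lie\cong\hat\Lie'$. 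At the level of this argument the only delicate points are identifying the lift of Theorem~\ref{theorem:existence.and.uniqueness.of.lifts} with the comparison isomorphism of Theorem~\ref{theorem:existence.and.uniqueness.of.LGC} in part (a), and arranging the choice of charts so that (b) is a strict equality rather than merely a natural isomorphism; both are handled by the identity-chart convention together with uniqueness of lifts. The genuine difficulty, of course, is entirely absorbed into Theorems~\ref{theorem:existence.and.uniqueness.of.LGC} and~\ref{theorem:existence.and.uniqueness.of.lifts}, whose proofs occupy Section~\ref{section:proof.of.main.theorem}.
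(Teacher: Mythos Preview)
Your proof is correct and follows essentially the same strategy as the paper: choose a wide local Lie groupoid chart for each object (taking the identity on genuine local Lie groupoids), define $\hat\Lie$ via the classical $\Lie$ on the domains, lift morphisms using Theorem~\ref{theorem:existence.and.uniqueness.of.lifts}, and build the comparison natural isomorphism $\eta_\CG=\hat\Lie'(\pi_\CG)$ for uniqueness. The only noteworthy deviation is in part~(a): the paper argues that the lift $[\til\pi]$ is a submersion near the units via Proposition~\ref{proposition:local presentations.of.morphisms} and then invokes a dimension count, whereas you identify $[\til\pi]$ directly with the comparison isomorphism of Theorem~\ref{theorem:existence.and.uniqueness.of.LGC} by uniqueness of lifts---your route is slightly cleaner and avoids the appeal to Proposition~\ref{proposition:local presentations.of.morphisms}.
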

 \begin{proof}
    Recall that \( \Lie \colon \LocLieGrpd \to \Alg\) is an equivalence of categories.

    Therefore, we shall instead prove a closely related fact from which the above theorem will immediately follow:

    We claim that there exists a unique functor 
    \[\BF \colon \SingLocLieGrpd \to \LocLieGrpd  \]
    with the following two properties:
    \begin{enumerate}[(1)]
        \item For all wide local Lie groupoid charts \( \pi \colon \til \CG \to \CG\) we have that \( \BF[\pi] \) is an isomorphism.
        \item \( \BF|_{\LocLieGrpd} = \Id \).
    \end{enumerate}

    First let us construct such a \( \BF \).
    For each local singular Lie groupoid \( \CG \) chose a local groupoid chart \( \pi_\CG \colon \til \CG \to \CG \). 
    We make this choice in such a way so that when \( \CG \) is a local Lie groupoid we take \( \til \CG = \CG \). 
    Therefore, for each such \( \CG \) we define \( \BF(\CG) := \til \CG \).

    If \( [\CF] \colon \CG \to \CH \) is a morphism of singular local Lie groupoids, let \( \BF[\CF] \colon \til \CG \to \til \CH \) be the unique morphism which makes the following diagram commute:
    \[ 
    \begin{tikzcd}
        \BF(\CG) \arrow[d, "\pi_\CG"] \arrow[r, "{\BF[\CF]}"] & \BF(\CH) \arrow[d, "\pi_\CH"] \\
        \CG \arrow[r, "{[\CF]}"] & \CH
    \end{tikzcd}\]
    Such a morphism is guaranteed to exist by Theorem~\ref{theorem:existence.and.uniqueness.of.lifts}. 

    We must show that \( \BF \) is a functor. Suppose \( [\CF_1] \colon \CG \to \CH \) and \( [\CF_2] \colon \CH \to \CK \) are a pair of morphisms of singular local Lie groupoids. Then we get a commutative diagram in \( \SingLocLieGrpd \):
    \[ 
    \begin{tikzcd}[column sep = large]
        \BF(\CG) \arrow[d, "{[\pi_\CG]}"] \arrow[r, "{\BF[\CF_1]}"] & \BF(\CH) \arrow[d, "{[\pi_\CH]}"] \arrow[r, "{\BF[ \CF_2]}"] & \til \BF(\CK) \arrow[d, "{[\pi_\CK]}"] \\
        \CG \arrow[r, "{[\CF_1]}"] & \CH \arrow[r, "{[\CF_2]}"] & \CK
    \end{tikzcd}
    \]
    From the definition of \( \BF([\CF_2] \circ [\CF_1])\) we conclude from the uniqueness part of Theorem~\ref{theorem:existence.and.uniqueness.of.lifts} that:
    \[ \BF([\CF_2] \circ [\CF_1]) = \BF[\CF_2] \circ \BF[\CF_1]\]
    Now we will show that \( \BF \) satisfies (1) and (2) above.
    
    (1) Suppose \( \pi \colon \til \CG \to \CG \) is a local groupoid chart of a singular Lie groupoid. From the definition of \( \CF \) we have a commuting diagram:
    \[
     \begin{tikzcd}
        \BF(\CG) = \til \CG \arrow[d, "{[\Id_{\til \CG}]}"]  \arrow[r, "{\BF[\pi]}"] & \BF(\CG) \arrow[d, "\pi_\CG"]  \\
        \til \CG \arrow[r, "{[\pi]}"] & \CG
    \end{tikzcd}
    \]
    Since \( \pi \) is a quasi-étale chart, it follows from Proposition~\ref{proposition:local presentations.of.morphisms} that \( \BF[\pi] \) must be a submersion in a neighborhood of the identity elements. By a dimension count, we conclude that \( \BF[\pi]\) is a diffeomorphism in a neighborhood of the identity elements and so it is an isomorphism of local Lie groupoids.

    Property (2) is immediate from the fact that \( \BF(\CG) = \CG \) by definition for \( \CG \in \LocLieGrpd\).
    
    Now suppose \( \BF' \) is another functor satisfying properties (1) and (2) above. Given \( \CG \in \SingLocLieGrpd\) let:
    \[ \eta(\CG) := \BF' [\pi_\CG] \colon \BF (\CG) \to \BF'(\CG) \]
    The domain and codomain are as above since \( \BF' \) satisfies property (2). Furthermore, for each \( \CG\) we have that \( \eta(\CG) \) is an isomorphism since \( \BF' \) satisfies property (1).
    We only need to check that \( \eta \) defines a natural transformation. 
    
    Suppose we have \( [\CF] \colon \CG \to \CH\). From the definition of \( \BF[\CF] \) we have that the following square commutes:
    \[
    \begin{tikzcd}
        \BF (\CG) \arrow[r, "{\BF[\CF]}"] \arrow[d, "{[\pi_\CG]}"] & \BF(\CH) \arrow[d, "{[\pi_\CG]}"] \\
        \CG \arrow[r, "{[\CF]}"] & \CH
    \end{tikzcd}
    \]
    If we apply the functor \( \BF'\) to this square we get a new commuting square:
    \[
    \begin{tikzcd}
        \BF (\CG) \arrow[r, "{\BF[\CF]}"] \arrow[d, "\eta(\CG)"] & \BF(\CH) \arrow[d, "\eta(\CH)"] \\
        \BF' (\CG) \arrow[r, "{\BF'[\CF]}"] & \BF'(\CH)
    \end{tikzcd}
    \]
    Which proves that \( \eta\) is a natural transformation.
 \end{proof}
By viewing singular Lie groupoids as singular local Lie groupoids, it follows that Theorem~\ref{theorem:main.lie.functor} is a direct corollary of Theorem~\ref{theorem:main.lie.functor.fancy}.
 
 Although the definition of the Lie functor above is a bit abstract. Computing this functor is not too difficult for many kinds of singular Lie groupoids. Let us fix a choice of functor \( \hat \Lie \) satisfying Theorem~\ref{theorem:main.lie.functor.fancy}.

 Suppose \( \CG \) is a singular Lie groupoid and let \( \pi \colon \til \CG \to \CG \) be a wide local groupoid chart. Notice that if we apply the functor \( \hat \Lie \) to such a local groupoid chart we get:
 \[
     \hat \Lie [\pi] \colon \Lie (\til \CG) \to \hat \Lie(\CG)
 \]
 In other words, the Lie algebroid of the domain of any local groupoid chart is canonically isomorphic to the Lie algebroid of \( \til \CG\).
 Therefore, we can compute \( \hat \Lie \) by simply constructing a local groupoid chart and then applying the classical Lie functor.

 \begin{example}
     Suppose \( G \) is a Lie group and suppose \( N \subset G \) is a totally disconnected normal subgroup. We observed earlier that \( G / N \) is a singular Lie groupoid (over a point) and the projection map \( \pi \colon G \to G/N\) is a local groupoid chart. Therefore, \( \hat \Lie(G/N) \cong \hat \Lie(G) \).

     This leads to some interesting calculations. For example, the group \( \RR/\QQ\) has rather degenerate topology but \( \hat \Lie(\RR/ \QQ) \cong \RR \) so it has a perfectly acceptable Lie algebra.
 \end{example}
 \subsection{Example - singular Lie groupoids with integrable algebroids}
 This section is not necessary for proving our main theorems. However, it does tell us what a singular Lie groupoid with an integrable algebroid must look like. It generalizes the example of the singular Lie group.
 \begin{lemma}
    Suppose \( \til \CG \grpd M \) is a Lie groupoid and \( \CN \subset \til \CG \) is a wide normal subgroupoid. We think of \( \CN \) as a diffeological space via the subspace diffeology. Suppose further that \( \CN \) has the following properties:
    \begin{enumerate}[(a)]
        \item \( \CN \) includes only isotropy arrows. In other words, \( s|_\CN = t|_\CN\).
        \item The smooth map \( s|_\CN \colon \CN \to M \) is a local subduction.
        \item For each \( x \in M \) the fiber \( N_x := s\inv(x) \cap N \) is totally disconnected.
    \end{enumerate}
    Then \( \CG := \til \CG/\CN\) with the quotient diffeology is a singular Lie groupoid.
 \end{lemma}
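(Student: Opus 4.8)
The plan is to prove that the quotient map $\pi\colon\til\CG\to\CG$ is a quasi-étale chart; since $\pi$ is surjective this gives at once that $\CG$ is a quasi-étale diffeological space. The argument is a groupoid version of the proof of Lemma~\ref{lemma:homogeneous.space}. Once that is done it remains to check that the groupoid structure maps of $\til\CG$ descend to $\CG$ (this, and only this, uses normality of $\CN$) and that the induced source and target maps are $\QUED$-submersions, for which I would appeal to Theorem~\ref{theorem:qeds.submersion.criteria}. Throughout write $\CN_x:=s\inv(x)\cap\CN$; by hypothesis~(a) this coincides with $t\inv(x)\cap\CN$ and is an isotropy group, and the fibers of $\pi$ are precisely the right cosets $g\,\CN_{s(g)}$, $g\in\til\CG$ (note $g\CN_{s(g)}\subset\til\CG$ and $s$ is constant on it).

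For (QE1), given a plot $\phi\colon U_\phi\to\CG$, a point $u\in U_\phi$ and $g\in\til\CG$ with $\pi(g)=\phi(u)$, first use that $\pi$ is a subduction to obtain a lift $\psi\colon V\to\til\CG$ of $\phi$ near $u$. Then $\psi(u)$ and $g$ lie in the same $\pi$-fiber, so $n_0:=\psi(u)\inv\cdot g\in\CN_{s(\psi(u))}$ and $s(g)=s(\psi(u))$. Applying hypothesis~(b) --- that $s|_\CN$ is a local subduction --- to the plot $v\mapsto s(\psi(v))$ and the point $n_0$ yields, after shrinking $V$, a plot $n\colon V\to\CN$ with $n(u)=n_0$ and $s(n(v))=s(\psi(v))$. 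Since $n(v)$ is isotropy the pair $(\psi(v),n(v))$ is composable, so $\til\phi(v):=\psi(v)\,n(v)$ is a smooth lift of $\phi$ with $\til\phi(u)=g$. For (QE2): the fiber through $g$ is the image of $\CN_{s(g)}$ under left translation by $g$, a diffeomorphism between target fibers of $\til\CG$; since $\CN_{s(g)}=\CN_{s(g)}$ is totally disconnected by hypothesis~(c), so is the fiber.

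The crux is (QE3). Let $\CO\subset\til\CG$ be open and $f\colon\CO\to\til\CG$ smooth with $\pi\circ f=\pi$. For $g\in\CO$ we have $f(g)\in g\,\CN_{s(g)}$, hence $n(g):=g\inv\cdot f(g)$ is a well-defined (as $t(f(g))=t(g)$) smooth map $\CO\to\CN$ lying fiberwise in $\CN_{s(g)}$; in particular $s\circ f=s$. Restricting $n$ to $\CO\cap s\inv(x)$ --- which is open in the manifold $s\inv(x)$, hence locally connected --- and using that $\CN_x$ is totally disconnected, we conclude $n$ is locally constant along source fibers. Thus near each $g$ the restriction of $f$ to $s\inv(s(g))$ agrees with right translation by $n(g)$, a diffeomorphism of $s\inv(s(g))$; consequently $df_g$ maps $\ker(ds_g)$ isomorphically onto $\ker(ds_{f(g)})$. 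Since $ds\circ df=ds$, the map $df_g$ induces the identity on $T_g\til\CG/\ker(ds_g)$, and the five lemma applied to the short exact sequences determined by $ds$ shows $df_g$ is an isomorphism. Hence $f$ is a local diffeomorphism, proving (QE3), so $\pi$ is a quasi-étale chart and $\CG\in\QUED$.

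Finally one checks $\CG\grpd M$ is a $\QUED$-groupoid. The maps $s,t$ of $\til\CG$ are constant on $\pi$-fibers (as $\CN$ is wide and isotropy-only), so they descend to smooth $s,t\colon\CG\to M$, and these are local subductions: a map out of the surjective local subduction $\pi$ is a local subduction as soon as its composite with $\pi$ is, and $s\circ\pi=\til s$ is a submersion. By Theorem~\ref{theorem:qeds.submersion.criteria}, to see $s$ (and likewise $t$) is a $\QUED$-submersion it suffices that each fiber $s\inv(x)=\til s\inv(x)/\CN_x$ be quasi-étale; but $\pi$ restricts to a local subduction $\til s\inv(x)\to s\inv(x)$ with totally disconnected fibers $g\,\CN_x$, and (QE3) for it is immediate, since any self-map over $s\inv(x)$ is fiberwise right translation by a map into the single totally disconnected group $\CN_x$, hence locally constant, hence a local diffeomorphism. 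With $s,t$ established as $\QUED$-submersions the fibered product $\CG\fiber{s}{t}\CG$ exists in $\QUED$, and the remaining structure maps descend: the unit $u=\pi\circ\til u$ is smooth, and normality of $\CN$ is exactly what makes $m$ and $i$ constant on $\pi$-fibers --- for composable $g,h$ and $n\in\CN_{s(g)}$, $n'\in\CN_{s(h)}$ one has $(gn)(hn')=(gh)\big((h\inv n h)n'\big)$ and $(gn)\inv=g\inv\big(g n\inv g\inv\big)$, with $(h\inv n h)n'\in\CN_{s(h)}$ and $g n\inv g\inv\in\CN_{t(g)}$ --- so $m,i$ descend to smooth maps via the subductions $\pi$ and the induced map on fibered products, and the groupoid axioms transfer from $\til\CG$ by surjectivity of $\pi$. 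Hence $\CG\grpd M$ is a singular Lie groupoid. The one genuinely delicate step is (QE3) for $\pi$: the rigidity of the source-fibered structure of $\til\CG\to\CG$.
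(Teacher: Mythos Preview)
Your argument is correct and follows essentially the same architecture as the paper: verify (QE1)--(QE3) for $\pi$ using the groupoid translation structure, then invoke Theorem~\ref{theorem:qeds.submersion.criteria} on the source fibers. The only notable local difference is in (QE3): the paper uses the fixed-point criterion of Lemma~\ref{lemma:quasi.étale.alternate}, writes $\alpha(g)=f(g)\cdot g\inv$, shows it is constant along \emph{target} fibers, and obtains $f(g)=\sigma(\til t(g))\cdot g$ with an explicit inverse, whereas you work with $n(g)=g\inv f(g)$ along \emph{source} fibers and finish with a five-lemma argument on $T\til\CG$---both are valid, and your version has the minor advantage that $n(g)\in\CN$ does not yet require normality, while the paper's $\alpha(g)=gn(g)g\inv\in\CN$ tacitly does.
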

 \begin{proof}
     First we show that the projection map \( \pi \colon \til \CG \to \CG \) is quasi-étale. The fibers of \( \pi \) are clearly totally disconnected due to property (c) above. From the definition of the quotient diffeology, we know \( \pi \) is a subduction but we must show it is a local subduction.

     Suppose \( \phi \colon U_\phi \to \CG \) is a plot and \( u_0 \in U_\phi \) and \( g_0 \in \til \CG \) are such that \( \phi(u_0) = \pi(g_0)\). 
     Since \( \pi \) is a subduction, we know that there must exist a smooth function \( \psi \colon V \to \til \CG \) such that \( V \subset U_\phi\) is an open neighborhood of \( u_0 \) and \( \pi \circ \psi = \phi\).

    Let \( n \in \CN \) be the unique element such that \( \psi(u) \cdot n = g \). 
    Now let \( \sigma \colon \CO \to \CN \) be a smooth section of \( s |_{\CN} \) such that \( \CO \) is an open neighborhood of \( s(n) \) and \( \sigma(s(n)) = n \). Such a section exists since \( s|_\CN\) is a local subduction.

    The function:
    \[ \til \phi(u) := \psi(u) \sigma(\s(u)) \]
    will be well defined in an open neighborhood of \( u_0 \) in \( U_\phi\). Furthermore, \( \pi \circ \til \phi = \phi\) and \( \til \phi(u_0) = g_0 \).

    Now we need to show that \( \pi \) is quasi-étale. We will use the simplified criteria from Lemma~\ref{lemma:quasi.étale.alternate}. Suppose \( f \colon \CO \to \til \CG \) is a smooth function defined on an open \( \CO \subset \CG \) such that \( \pi \circ f = \pi \) and \( f(g_0) = g_0 \) for some point \( g_0 \in \til \CG \). We need to show that \( f \) is a diffeomorphism in a neighborhood of \( g_0 \).

    Let us set some notation, to avoid confusion we will write \( \til s \) and \( \til t \) to denote the source and target maps for \( \til \CG \) and \( s \) and \( t \) to denote the source and target maps for \( \CG \).

    Assume without loss of generality that for all \( x \in M \) we have that \( \til\t\inv(x) \cap \CO \) is connected. Consider the function:
    \[ \alpha  \colon \CO \to \CN \qquad \alpha(g) := f(g) \cdot g\inv \]
    Since the the fibers of \( \CN \to M \) are totally disconnected, it follows that, for all \( x \in M \), the restriction of \( \alpha \) to  \( \til t\inv(x) \cap \CO \) is constant. In other words, \( \alpha \) is constant on target fibers. Therefore, there exists a unique function \( \sigma \colon \til \t(\CO) \to \CN \) which is a section of the source map and with the property that:
    \[ \forall g \in \CO \qquad \alpha(g) = \sigma(\til \t(g))   \]
    We can rewrite this to get that:
    \[ \forall g \in \CO \qquad f(g) \cdot g\inv = \sigma( \til \t(g)) \]
    In other words:
    \[ \forall g \in \CO \qquad f(g) = \sigma(\til \t(g)) \cdot g \]
    From this equation it follows that:
    \[ \forall g \in f(\CO) \qquad f\inv(g) = \sigma(\til t(g))\inv \cdot g \]
    Since \( f \) has a smooth inverse, \( f \) is a diffeomorphism onto its image. This shows that \( \pi \) is a quasi-étale chart.

    To finish the proof. We need to show that \( s \colon \CG \to M \) is a \( \QUED \)-submersion. By Theorem~\ref{theorem:qeds.submersion.criteria}, it suffices to show that the fibers of \( t \) are quasi-étale.

    Let us fix \( x \in M \) and consider the projection:
    \[ \pi_x \colon \til \t\inv(x) \to \t\inv(x) \]
    We claim that \( \pi_x \) is a quasi-étale chart. A standard argument shows that \( \pi_x \) is a local subduction. Of course, the fibers of \( \pi_x\) are totally disconnected.

    To show \( \pi_x \) is quasi-étale. We will use the simplified criteria from Lemma~\ref{lemma:quasi.étale.alternate}. Suppose \( f \colon \CO \to \til \t \inv(x)  \) is a smooth function defined on an open \( \CO \subset \til \t\inv(x) \) such that \( \pi_x \circ f = \pi_x \) and \( f(g_0) = g_0 \) for some point \( g_0 \in \til \CG \). We need to show that \( f \) is a diffeomorphism in a neighborhood of \( g_0 \).

    The argument that this \( f \) is a local diffeomorphism is essentially identical to the one from earlier on in this proof. We divide \( f \) by the identity map and observe that \( f \) must locally be left translation by an element of \( \CN \).
 \end{proof}
 The converse to the above lemma is also true. Every source connected singular Lie groupoid with an integrable Lie algebroid is the quotient of a Lie groupoid by a totally disconnected wide normal subgroupoid.
 \begin{lemma}
     Suppose \( \CG \grpd M \) is a source connected Weistein groupoid and \( \hat \Lie(\CG) \) is an integrable Lie algebroid. Then there exists a Lie groupoid \( \CG \) with a wide normal subgroupoid \( \CN \) satisfying properties (a), (b) and (c) from the previous lemma such that \( \CG \cong \til \CG / \CN \).  
 \end{lemma}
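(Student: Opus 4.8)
The plan is to reverse the construction of the previous lemma: from the data of $\CG$ we build the covering Lie groupoid as a source‑simply‑connected integration of $A := \hat\Lie(\CG)$, lift $\Id_\CG$ to a groupoid homomorphism onto $\CG$, and take its kernel as $\CN$. To begin, Theorem~\ref{theorem:existence.and.uniqueness.of.LGC} provides a wide local Lie groupoid chart $\pi^\circ \colon \til\CG^\circ \to \CG$, and by construction $\til\CG^\circ$ is a local Lie groupoid with $\Lie(\til\CG^\circ) \cong A$. Since $A$ is assumed integrable, choose a source‑simply‑connected Lie groupoid $\til\CG$ integrating $A$ (for instance $\Pi_1(A)$, which is smooth precisely because $A$ is integrable). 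By the local‑to‑global theory for local Lie groupoids (\cite{crainic_integrability_2003}, \cite{cabrera_local_2020}; cf.\ Theorem~\ref{theorem.local.lie.groupoids.vs.algebroids}), $\til\CG^\circ$ embeds into $\til\CG$ as an open neighbourhood of the unit section, and we regard it as such henceforth; in particular $\til\CG^\circ$ generates $\til\CG$.

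The technical core of the proof is to extend $\pi^\circ$ to a groupoid homomorphism $\pi \colon \til\CG \to \CG$ covering $\Id_M$. Since $\til\CG^\circ$ generates the source‑connected $\til\CG$ and the source fibres of $\til\CG$ are simply connected, $\pi^\circ$ extends uniquely to such a $\pi$; concretely, one forms the subgroupoid of $\til\CG \times_{M\times M} \CG$ generated by the graph of $\pi^\circ$ (the fibre product being over $(\s,\t)$) and checks, using simple connectedness of the source fibres, that the first projection restricts to it as a bijection onto $\til\CG$ --- this is the Malcev‑type associative‑completion argument used, in the group case, in the footnote to the classification of connected singular Lie groups above (see \cite{malcev_sur_1941}, \cite{fernandes_associativity_2020}). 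Smoothness of $\pi$ is then checked pointwise: for each $k$ the $k$‑fold multiplication map $\til\CG^\circ \times_M \cdots \times_M \til\CG^\circ \to \til\CG$ is a submersion with open image, and these images exhaust $\til\CG$ by source‑connectedness, so an arbitrary $g \in \til\CG$ lies in such an image and the multiplication map admits a smooth local section $\beta$ near $g$; since $\pi$ is a homomorphism, near $g$ it equals the composite of $\beta$, the $k$‑fold product $\pi^\circ \times \cdots \times \pi^\circ$, and multiplication in $\CG$, hence is smooth.

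With $\pi$ in hand, one shows it is a quasi‑étale chart by the same translation argument as in the classification of connected singular Lie groups: near the unit section $\pi$ agrees with $\pi^\circ$, and the general case reduces to this one by right‑translating along a local bisection, using that being a local subduction is a local condition on the domain and, for (QE3), the simplified criterion of Lemma~\ref{lemma:quasi.étale.alternate} --- given $f$ defined near $g_0 \in \til\CG$ with $\pi \circ f = \pi$ and $f(g_0) = g_0$, the map $g \mapsto f(g)\,g\inv$ takes values in $\ker\pi$ and is constant along target fibres of $\til\CG$ (as these are carried into totally disconnected fibres of $\pi$), so $f$ is left translation by a smooth section of $\ker\pi$ and thus a local diffeomorphism. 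Now put $\CN := \ker\pi = \pi\inv(\u_\CG(M))$; it is a wide subgroupoid of $\til\CG$, and it is normal because $\pi$ is a homomorphism. Property (a) holds since $\pi$ covers $\Id_M$, so $\pi(n)$ being a unit forces $\s(n) = \t(n)$; property (c) holds since $N_x = \s\inv(x) \cap \CN = \pi\inv(1_x)$ is a fibre of the quasi‑étale map $\pi$, hence totally disconnected; and property (b) holds because, under the canonical identification $\CN \cong \til\CG \times_\CG \u_\CG(M)$, the map $\s|_\CN$ is (up to the diffeomorphism $\u_\CG(M) \cong M$) the base change of the local subduction $\pi$ along $\u_\CG(M) \into \CG$, and local subductions are stable under base change. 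Finally, $\pi$ is an open surjective subduction --- open because it is a local subduction, and surjective because its image is an open subgroupoid containing a neighbourhood of the units, hence all of the source‑connected $\CG$ --- and its fibres are exactly the right $\CN$‑cosets, so $\CG$ carries the quotient diffeology of $\til\CG/\CN$ and $\pi$ descends to an isomorphism of diffeological groupoids $\til\CG/\CN \xrightarrow{\sim} \CG$.

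The step I expect to be the main obstacle is the construction of the global homomorphism $\pi \colon \til\CG \to \CG$ out of the local chart, and in particular the verification that it is smooth away from the unit section; the remaining steps run closely parallel to the previous lemma and to the classification of connected singular Lie groups.
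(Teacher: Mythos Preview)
Your proposal is correct and follows essentially the same route as the paper: take a wide local Lie groupoid chart, embed its domain in a source simply connected integration $\til\CG$ of $A$, extend $\pi^\circ$ to a global homomorphism $\pi$ via the associative completion machinery of Fernandes--Michiels (the paper invokes this directly rather than spelling out the graph argument), and set $\CN = \ker\pi$. Your treatment is in fact more complete than the paper's rather terse proof, which asserts without argument that the extended $\pi$ is a local groupoid chart (you supply the translation argument), omits property (c) entirely (you note it follows from quasi-\'etaleness of $\pi$), and does not explicitly verify the final isomorphism $\CG \cong \til\CG/\CN$; for property (b) the paper gives the direct lift-through-a-point argument, which is just the unwinding of your base-change-stability claim for local subductions.
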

 \begin{proof}
     Let \( \pi^\circ \colon \til \CG^\circ \to \CG \) be a wide local groupoid chart. By a theorem of Fernandes and Michiels~\cite{fernandes_associativity_2020}, it is possible to chose \( \til \CG^\circ \) in such a way that \( \til \CG^\circ \) is an open subset of a source simply connected Lie groupoid \( \til \CG \).

     Using the associative completion functor of Fernandes and Michiels, one can extend the map \( \pi^\circ \) to a local groupoid chart \( \pi \colon \til \CG \to \CG \) defined on all of \( \til \CG \). Since \( \pi \) is open and \( \CG \) is source connected, it follows that \( \pi \) is surjective.

     Let \( \CN = \ker \pi \). We only need to show that \( \CN \) satisfies properties (a), (b) and (c). 
     
     Property (a) is immediate since \( \pi \) covers the identity map at the level of objects. 

     For property (b), suppose we have a plot \( \phi \colon U_\phi \to M \) and \( u  \in U_\phi \) together with \( n \in \CN \) such that \( s(n) = \phi(u)\).  Since the unit embedding \( \u \colon M \to \CG \) is smooth the map \( u \circ \phi \colon U_\phi \to \CG \) is a plot. Since \( \pi \colon \til \CG \to \CG \) is a local subduction, there must exist an open neighborhood \( V \subset U_\phi \) of \( u \) and a lift \( \til \phi \colon V \to \til \CG \) such that \( \pi \circ \til \phi = u \circ \phi|_{V} \) and \( \til \phi(u) = n \).

     Since \( \pi \circ \til \phi = u \circ \phi|_V \) it follows that \( s \circ \til \phi = \phi\) and the image of \( \til \phi \) is contained in \( \CN\).
 \end{proof}
\section{Proof of Theorem~\ref{theorem:existence.and.uniqueness.of.LGC} and Theorem~\ref{theorem:existence.and.uniqueness.of.lifts}}\label{section:proof.of.main.theorem}
Suppose \( \CG \grpd M \) is a local singular Lie groupoid. Following our usual convention, let us write \( s, t, u, m \) and \( i \) to denote the source, target, unit, multiplication and inverse groupoid structure maps for \( \CG \grpd M \), respectively. We will also use:
\[ \delta \colon \CD \to \CG \qquad (g,h) \mapsto \m(g,\i(h)) \]
to denote the division map. Note that the domain of division:
\[ \CD  := \{ (g,h) \in \CG \fiber{\s}{\s} \CG \ : \ \m(g,\i(h)) \text{ is well-defined} \} \]
is an open neighborhood of the image of \( \u \times u \colon M \to \CG \fiber{\s}{\s} \CG \).
It will be desirable for us to be able to compare elements of \( \CG \) by dividing them. Our next lemma states that any local singular Lie groupoid is isomorphic to one where this is the case.
\begin{lemma}
Suppose \( \CG' \grpd M \) is a local singular Lie groupoid.

Then \( \CG' \) is isomorphic (as a local groupoid) to a local singular Lie groupoid \( \CG \grpd M \) with the property:
\[ \forall (g,h) \in \CD \qquad \delta(g,h) = \u \circ \t (g) \quad \Leftrightarrow \quad g = h \} \]
\end{lemma}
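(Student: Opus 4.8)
The plan is to replace the multiplication on $\CG'$ by a conjugate that makes division with equal arguments land exactly on the units, leaving the underlying diffeological space (and hence the quasi-étale structure and the $\QUED$-submersion property of source and target) untouched. First I would observe what the condition really says: for $(g,h) \in \CD$ we always have $\s(\delta(g,h)) = \s(h)$ and $\t(\delta(g,h)) = \t(g)$, so $\delta(g,h)$ is an arrow from $\s(h)$ to $\t(g)$; when $g=h$ this is an isotropy arrow at $\t(g)$, and the left inverse law (LG5) already forces $\delta(g,g) = \m(g,\i(g)) = \u(\t(g))$ \emph{whenever $\m(g,\i(g))$ is defined}. So in fact the $\Leftarrow$ direction is automatic for the given local groupoid, at least on the locus where the expression makes sense; the real content is the $\Rightarrow$ direction — we must arrange that $\delta(g,h) = \u\circ\t(g)$ forces $g = h$ — together with making everything hold on a genuine open neighborhood of the units rather than merely ``where defined.''

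The key step is the construction of the isomorphism. I would take $\CG$ to have the same arrows, objects, source, target, unit, and inverse as $\CG'$, and define a new multiplication near the units. The natural candidate is to left-translate: since $\CG'$ is a local singular Lie groupoid, shrink $\CM$ so that the division map $\delta$ is defined and smooth on a neighborhood of the diagonal of $\CG' \fiber{\s}{\s}\CG'$, and use $\delta$ itself as the new ``division.'' Concretely, one checks that the assignment $\m_{\mathrm{new}}(g, k) := \delta(g, \i(k))$ on a small enough neighborhood of $\u(M)\times\u(M)$ defines a local groupoid multiplication, with the same units and inverses, and that the identity map $\mathrm{Id}_{\CG}$ is a local groupoid isomorphism $\CG' \to \CG$ — here I would lean on Definition~\ref{definition:local.groupoid.morphism}, which only requires the structure-map compatibilities to hold \emph{on a representative}, i.e.\ on some neighborhood of the units, so shrinking is always permitted. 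For the new structure, $\delta_{\mathrm{new}}(g,h) = \m_{\mathrm{new}}(g,\i(h)) = \delta(g,h)$ by construction, so the required biconditional becomes: $\delta(g,h) = \u\circ\t(g) \iff g = h$. The $\Leftarrow$ direction is the inverse law as above; for $\Rightarrow$, from $\m(g,\i(h)) = \u(\t(g))$ one multiplies on the right by $h$ (valid on a small enough neighborhood, using associativity LG6 and the unit law LG4) to get $g = \m(\u(\t(g)), h) = h$.

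The main obstacle I anticipate is bookkeeping of domains: all of these identities — associativity, the unit and inverse laws, the cancellation argument — hold only ``whenever well-defined,'' so one must fix, once and for all, a sufficiently small open neighborhood $\CM \subset \CG'\fiber{\s}{\s}\CG'$ of $\u(M)\times\u(M)$ on which $\delta$, the re-bracketings used in the cancellation step, and the relevant compositions are \emph{all} simultaneously defined and smooth, and verify that this $\CM$ still contains an open neighborhood of the units after each shrink. This is routine but fiddly; the cleanest route is to note that $(g,h) \mapsto (g, \delta(g,h), h)$ and the various triple-composition maps are continuous near the diagonal and send units to units, so finitely many preimages of the domain of $\m$ intersect in an open neighborhood of $\u(M)\times \u(M)$. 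Once the domains are pinned down, verifying that $\m_{\mathrm{new}}$ satisfies (LG1)–(LG6) and that $\mathrm{Id}$ intertwines the old and new structures is a direct computation, and the quasi-étale and $\QUED$-submersion hypotheses transfer for free since the diffeological spaces and the maps $\s,\t,\u,\i$ are literally unchanged.
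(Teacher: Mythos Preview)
Your cancellation argument --- from $\m(g,\i(h)) = \u(\t(g))$ right-multiply by $h$, then use associativity (LG6), the inverse law (LG5), and the unit law (LG4) to deduce $g = h$ --- is exactly the paper's proof. But the ``new multiplication'' you build around it is doing nothing: $\m_{\mathrm{new}}(g,k) := \delta(g,\i(k)) = \m(g,\i(\i(k)))$ agrees with $\m$ wherever both are defined (since $\i\circ\i = \mathrm{Id}$ near the units), so all you have accomplished by passing to $\m_{\mathrm{new}}$ is to shrink the domain of multiplication. That shrinking is the actual content, and the paper does it directly.

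The paper's argument is this: morphisms of local groupoids are \emph{germs} around the unit section (Definition~\ref{definition:local.groupoid.morphism}), so restricting $\CG'$ to any open neighborhood $\CG \subset \CG'$ of $\u(M)$ already yields an isomorphic local groupoid --- no need to redefine any structure maps. One then takes $\CG$ small enough that for all $g,h \in \CG$ with $\s(g)=\s(h)$ each step of the chain
\[ (gh^{-1})h \;=\; (\u\circ\t(g))\,h, \quad g(h^{-1}h) = h, \quad g(\u\circ\s(h)) = h, \quad g = h \]
is well-defined in $\CG'$. This is a finite list of open conditions, each satisfied on the units, so such a $\CG$ exists. Your ``domain bookkeeping'' paragraph is precisely the right observation; just apply it to shrinking the arrow space itself rather than to constructing a parallel multiplication that coincides with the original.
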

\begin{proof}
Consider the following calculation:
\begin{align*}
   (gh\inv)h &= (\u \circ \t(g))h \\
   g (h\inv h) &= h \\
   g (\u \circ \s(h)) &= h \\
   g &= h
\end{align*}
Let \( \CG \subset \CG' \) be an open neighborhood of the units with the property that for all \( g, h \in \CG \) we have that each step of the above calculation is well-defined. Since being well-defined is an open condition (it is just about being in the inverse image of open sets under some continuous functions) it follows that \( \CG \) is an open set. \( \CG \) will be an open neighborhood of the units since the above calculation is always well-defined for units.

Now if we have that \( \delta(g, h) = \u \circ \t(g) \) for \( g,h \in \CG \), it follows from the above calculation that \( g = h \).
\end{proof}
\begin{remark}
The above proof can be generalized into the following principle: For any local groupoid and a finite number of equations that are consequences of the groupoid axioms, there exists an open local subgroupoid where the desired equation holds. Crucially, this only holds for a \emph{finite} number of equations as an infinite intersection of open sets may not be open.
\end{remark}

\subsection{Lifting the division map}

We will begin by showing that, given a quasi-étale chart \( \pi \colon \til \CG \to \CG \), we have can lift the division operation to \( \til \CG \) in a way that has favorable properties.
 
Given a quasi-étale chart \( \pi \colon \til \CG \to \CG \) we will use the conventions that: \( \til s := s \circ \pi \) and \( \til t  := t \circ \pi \). 
Since \( s \), \( t \) and \( \pi \) are local subductions, \( \til s \) and \( \til t \) will be submersions. 

Rather than choosing local representations of the normal groupoid structure maps, we will begin by choosing a local presentation of division. 
This is due to the fact that all of the remaining structure maps can be recovered from division. 
Indeed, (local) groupoids can be studied entirely in terms of their division map and source map (see the appendix of Crainic, Nuno Mestre, and Struchiner~\cite{crainic_deformations_2020}.

Our first lemma says that it is possible to find a lift of the division map:
\begin{lemma}\label{lemma:delta.exists}
Let \( \CG \) be a singular Lie groupoid and let us fix a point \( x_0 \in M \). Suppose \( \pi \colon \til \CG \to \CG \) is a quasi-étale chart and we have a point \( e \in \til \CG \) such that \( \pi(e) = \u(x_0)\).

There exists an open neighborhood \( \til \CD \subset \til \CG \fiber{\til s}{\til \t} \til \CG \) of \( (e,e) \) together with a submersion \( \delta \colon \til \CD \to \til \CG \) such that the following diagram commutes:
\begin{equation}\label{diagram:delta.is.lift}
\begin{tikzcd}[column sep=huge]
\til \CD \arrow[d, "(\pi \circ \pr_2) \times (\pi \circ pr_2)", swap] \arrow[r, "\til \divi"] & \til \CG \arrow[d, "\pi"] \\
\CD  \arrow[r, "\divi"] & \CG
\end{tikzcd}
\end{equation}
\end{lemma}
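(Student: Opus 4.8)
The plan is to lift the division map $\divi \colon \CD \to \CG$ along the quasi-étale chart $\pi$ by treating it as a smooth map between quasi-étale diffeological spaces and invoking the representation machinery already established in Section~\ref{section:quasi-étale.diffeological.spaces}. The key point is that the domain $\CD \subset \CG \fiber{\s}{\s} \CG$ and the fiber product $\til \CG \fiber{\til \s}{\til \t} \til \CG$ admit quasi-étale charts compatible with $\pi$, and that $\pi$ is a local subduction. I will first upgrade $\pi \times \pi$ (restricted to appropriate fiber products) to a quasi-étale chart, then produce the lift as a local lift of a plot, and finally verify it is a submersion by a dimension count.

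\textbf{Step 1: Identify the correct charts.} Since $\s \colon \CG \to M$ is a $\QUED$-submersion, the fiber product $\CG \fiber{\s}{\s} \CG$ is a quasi-étale diffeological space, and by Proposition~\ref{prop:fiber.product.chart} the map $\pi \fiber{}{} \pi \colon \til \CG \fiber{\til \s}{\til \s} \til \CG \to \CG \fiber{\s}{\s} \CG$ is a quasi-étale chart (note $\til \CG \fiber{\til \s}{\til \s} \til \CG$ is a manifold because $\til \s = \s \circ \pi$ is a submersion). The set $\CD$ is open in $\CG \fiber{\s}{\s} \CG$, so restricting gives a quasi-étale chart over $\CD$; call its domain $\til \CD_0 \subset \til \CG \fiber{\til \s}{\til \s} \til \CG$. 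However, the diagram in the statement uses the fiber product $\til \CG \fiber{\til \s}{\til \t} \til \CG$ with the map $(\pi \circ \pr_2) \times (\pi \circ \pr_2)$ — wait, that should be $(\pi \circ \pr_1) \times (\pi \circ \pr_2)$ mapping to $\CG \fiber{\s}{\s} \CG$; I would note this apparent typo and proceed with the version that makes the square commute, namely the chart $\til \CD_0 \to \CD$ induced by $\pi$ on each factor. In any case, the crucial input is that this induced map is a quasi-étale chart around $(e,e)$, and that $(e,e)$ lies over $(\u(x_0), \u(x_0)) = (\divi^{-1}$-relevant base point$)$.

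\textbf{Step 2: Produce the lift.} Now consider the composite $\divi \circ (\text{chart map}) \colon \til \CD_0 \to \CG$. Since $\pi \colon \til \CG \to \CG$ is a local subduction and $\til \CD_0$ is a manifold, and since $\pi(e) = \u(x_0) = \divi(\u(x_0), \u(x_0))$, Definition~\ref{definition:local.subduction} gives an open neighborhood $\til \CD \subset \til \CD_0$ of $(e,e)$ and a smooth map $\til \divi \colon \til \CD \to \til \CG$ with $\til \divi(e,e) = e$ and $\pi \circ \til \divi = \divi \circ (\text{chart map})|_{\til \CD}$, which is exactly the commutativity of Diagram~\ref{diagram:delta.is.lift}. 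This uses only that $\divi \circ (\text{chart})$ is a plot on $\CG$ composed into a local subduction.

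\textbf{Step 3: $\til \divi$ is a submersion.} Apply Proposition~\ref{proposition:local presentations.of.morphisms}(b): the square exhibiting $\til \divi$ together with the quasi-étale charts is a local representation of the smooth map $\divi \colon \CD \to \CG$. One must first check $\divi$ is a local subduction — this follows because $\divi$ admits a smooth section near the units (namely $g \mapsto (g, \u(\s(g)))$, using $\divi(g, \u\s(g)) = g$), so it is even a retraction up to the groupoid identities, hence a subduction, and being a map whose domain projects submersively it is a local subduction. Then part (b) of the Proposition forces $\til \divi$ to be a submersion. The main obstacle I anticipate is the bookkeeping around which fiber product and which projection maps make the chart compatible with $\divi$ — getting the index conventions ($\fiber{\s}{\s}$ versus $\fiber{\s}{\t}$, and $\pr_1$ versus $\pr_2$) to line up so that the square genuinely commutes — rather than any deep difficulty; once the charts are set up correctly, the existence of the lift and the submersion property are immediate from the cited results.
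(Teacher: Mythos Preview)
Your approach is essentially identical to the paper's: identify the fiber-product chart via Proposition~\ref{prop:fiber.product.chart}, lift through $e$ using that $\pi$ is a local subduction, and conclude $\til\divi$ is a submersion via Proposition~\ref{proposition:local presentations.of.morphisms}(b). The paper also glosses over why $\divi$ is a local subduction; your justification via the section $g \mapsto (g,\u(\s(g)))$ only yields a subduction, not a local one---the clean argument is that $(g,h) \mapsto (\divi(g,h),h)$ is a local diffeomorphism near the units, exhibiting $\divi$ locally as a projection, hence a local subduction.
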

\begin{proof}
Note that since \( \s \) is a \( \QUED\)-submersion, it follows that \( \CG \fiber{s}{s} \CG \) is quasi-étale and the map:
\[ \pi \circ \pr_1 \times \pi \circ \pr_2 \colon \til \CG \fiber{s}{s} \til \CG  \to  \CG \fiber{s}{s} \CG\]
is a quasi-étale chart.

Since the domain of division, \( \CD \), is an open subset of a quasi-étale space it follows that \( \CD \) is quasi-étale as well.

Since \( \pi \) is a local subduction, is possible to chose a local representation of the division map \( \til \delta \colon \til \CD \to \til \CG \) where \( \til \CD \subset \til \CG \fiber{\s}{\s} \til \CG \) is an open neighborhood of \( (e,e) \) and which makes Diagram~\ref{diagram:delta.is.lift} commute.

Furthermore, we can apply Lemma~\ref{lemma:fiber.hopping}, to chose \( \til \delta \) in such a way that \( \til \delta(e,e) = e \).

Since \( \divi \) is a local subduction, Proposition~\ref{proposition:local presentations.of.morphisms} tells us that \( \til \divi \) will be a submersion. 
\end{proof}
The core of our proof of the existence of local groupoid charts will be the fact that a map \( \delta \) as above will always be (in some open neighborhood) the division map for a local groupoid.

Our next lemma tells us that a choice of representation of the division map also induces a lift of the unit embedding.

\begin{lemma}\label{lemma:fix.units}
Let \( \CG \) be a singular Lie groupoid and let us fix a point \( x_0 \in M \). Suppose \( \pi \colon \til \CG \to \CG \) is a quasi-étale chart and we have a point \( e \in \til \CG \) such that \( \pi(e) = \u(x_0)\). Let \( \delta \) be a representation of division as in Lemma~\ref{lemma:delta.exists}.

There exists a smooth function 
\[ \til \u \colon \til M \to \til \CG \] 
where \( \til M \) is an open neighborhood of \( x_0 \) in \( M \) and such that:
\begin{itemize}
\item \( \pi \circ \til \u = \u \)
\item \( \til \u(x_0) = e  \)
\item \( \forall x \in \til M  \) we have that \( \til \divi(\til \u(x), \til \u(x)) = \til \u(x) \).
\end{itemize}
\end{lemma}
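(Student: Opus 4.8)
The plan is \emph{not} to lift the unit section of $\CG$ directly along $\pi$ but to extract $\til\u$ from the given representation $\til\delta$ of division by a fibrewise descent along the target map. Write $\CW\subseteq\til\CG$ for the open neighbourhood of $e$ on which $g\mapsto(g,g)$ takes values in $\til\CD$, and define $\Phi\colon\CW\to\til\CG$ by $\Phi(g):=\til\delta(g,g)$. From the commutative square of Lemma~\ref{lemma:delta.exists} and the groupoid identity $\delta(h,h)=\u\circ\t(h)$ in $\CG$ one gets two relations at once: $\pi\circ\Phi=\u\circ\til\t|_{\CW}$, because $\pi(\til\delta(g,g))=\delta(\pi(g),\pi(g))=\u(\t(\pi(g)))=\u(\til\t(g))$; and $\til\t\circ\Phi=\til\t|_{\CW}$, because $\til\t(\Phi(g))=\t(\pi(\Phi(g)))=\t(\u(\til\t(g)))=\til\t(g)$. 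Also $\Phi(e)=\til\delta(e,e)=e$.

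The crucial observation is that $\Phi$ is constant on the fibres of $\til\t$. After shrinking $\CW$ to a smaller open neighbourhood of $e$ (still called $\CW$) on which $\til\t$ restricts to a surjective submersion onto an open neighbourhood $\til M$ of $x_0$ with \emph{connected} fibres, fix $g\in\CW$ with $x:=\til\t(g)$ and let $g'$ lie in the same $\til\t$-fibre. Joining $g$ to $g'$ by a smooth path $\gamma$ inside the connected manifold $\til\t\inv(x)\cap\CW$, the composite $\Phi\circ\gamma$ is a plot of $\til\CG$ whose image lies in $\pi\inv(\u(x))$; by axiom (QE2) of Definition~\ref{defn:quasi-étale} this fibre is totally disconnected, so $\Phi\circ\gamma$ is locally constant, hence constant. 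Thus $\Phi(g)=\Phi(g')$.

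Therefore $\Phi$ descends along the surjective submersion $\til\t|_{\CW}$ to a smooth map $\til\u\colon\til M\to\til\CG$ characterised by $\til\u\circ\til\t|_{\CW}=\Phi$. The three required properties now follow by bookkeeping. From $\pi\circ\Phi=\u\circ\til\t|_{\CW}$ we obtain $\pi\circ\til\u=\u$; since $\til\t(e)=\t(\u(x_0))=x_0$ we obtain $\til\u(x_0)=\til\u(\til\t(e))=\Phi(e)=e$; and from $\til\t\circ\Phi=\til\t|_{\CW}$ we obtain $\til\t\circ\til\u=\Id_{\til M}$, so $\til\u$ is an embedding whose image runs through $e$ inside $\CW$. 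After a last shrinking of $\til M$ guaranteeing $\til\u(x)\in\CW$ and $(\til\u(x),\til\u(x))\in\til\CD$ for all $x\in\til M$, the computation $\til\delta(\til\u(x),\til\u(x))=\Phi(\til\u(x))=\til\u\bigl(\til\t(\til\u(x))\bigr)=\til\u(x)$ establishes the final property.

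I expect the genuine obstacle to be conceptual rather than computational: one must resist the natural move of choosing an arbitrary lift $\sigma$ of $\u$ with $\sigma(x_0)=e$ and then trying to correct it, because such lifts are \emph{not} unique --- for instance the groupoid $T\RR/\ZZ$ of Example~\ref{example:singular.tr} admits infinitely many smooth lifts of its unit section through a fixed point --- and a generic one fails $\til\delta(\sigma(x),\sigma(x))=\sigma(x)$. The construction above instead produces the specific lift \emph{selected} by the chosen $\til\delta$; the only subtle ingredient is the fibrewise-constancy step, which is exactly where the total disconnectedness of the fibres of a quasi-étale chart (QE2) is used, and the successive shrinkings of the domain are routine.
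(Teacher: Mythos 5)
Your proposal is correct and is essentially the paper's own argument: both build \( \til \u \) out of the diagonal map \( \Phi(g) = \til\divi(g,g) \), use the commutativity with \( \pi \) to see that \( \pi\circ\Phi \) lands in units, and invoke total disconnectedness of the \( \pi \)-fibres (QE2) to force \( \Phi \) to be constant along \( \til\t \)-fibres, from which the unit lift and its three properties follow. The only difference is in how that constancy is exploited: you shrink to connected \( \til\t \)-fibres and descend \( \Phi \) directly along the submersion \( \til\t|_{\CW} \), whereas the paper compares the kernel distributions of \( T\til\t \) and \( T(\til\divi\circ\Delta) \), does a dimension count, and identifies the image of \( \til\divi\circ\Delta \) as the graph of a section of \( \til\t \); your version is a slightly more elementary packaging of the same idea and is complete as written.
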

\begin{proof}
First let \( \CU \subset \til \CG \) be an open neighborhood of \( e \) such that for all \(g \in \CU \), we have that \( (g,g) \in \til \CD \).
In other words, \( (g,g) \) is in the domain of \( \til \delta\).
Write \( \Delta \colon \CU \to \til \CD \) to denote the diagonal embedding.

Now, note that for all \( g \in \CU \), compatibility of \( \divi \) with \( \pi \) implies that: 
\[ \til \t ( g)  = (\til \s \circ \til \divi \circ \Delta)(g) \] 
Since \( \til t \) is a surjective submersion, this implies that \(\til \divi \circ \Delta \)
has rank at least equal the dimension of the object manifold \( M \).

On the other hand, if \( g(t) \) is a curve in \(  \CU \) tangent to a fiber of \( \til \t|_{\CU} \), then it follows that \( (\pi \circ \til \divi \circ \Delta)(g(t)) \) is a constant path. 
Since \( \pi \) is quasi-étale, it follows that \( g(t) \) is a constant path in a fiber of \( \til \divi \circ \Delta \).
This implies that kernel distribution of \( \til \t|_{\CU} \) is contained in the kernel distribution of \( \til \divi \circ \Delta \). 
In other words: 
\[ \ker T \til \t \subset \ker T (\til \divi \circ \Delta) \]
Since the rank of \( \til \divi \circ \Delta \) is at least equal to the rank of \( \til \t |_{\CU}\), a dimension count tells us that the kernel distributions are actually equal.

Now let us shrink \( \CU \) to a smaller neighborhood of \( e \in \til \CG \) with the property that the fibers of \( \til \t|_{\CU} \colon \CU \to M \) and \( \til \delta \circ \Delta \colon \CU \to \til \CG \) coincide. 
This means that for all \( g, h \in \CU \), we have that:
\begin{equation}\label{eqn:fix.units}
 \til \t  (g) = \til \t(h) \quad \Leftrightarrow \quad \til \divi(g,g) = \til \divi(h,h) 
 \end{equation}
From all of these facts, we conclude that \( \til t \) is a diffeomorphism when restricted to the image of \( \til \divi \circ \Delta \). In other words, the image of \( \til \divi \circ \Delta \) must be the image of a section \( \til \u \colon \til M \to \til \CG \) of \( \til \t \).

Compatibility of \( \pi \) with \(\til \divi \) implies that \( \pi \circ \til \u = \u \). 
Furthermore, since \( \til \divi( e , e ) = e \) we know that \( \til \u(x_0) = e  \). 
Finally, for any \( x \in \til M \), we have that:
\[ \til \t (\til u(x) ) = \til \t ( \til \divi( \til \u(x), \til \u(x))) \]
By (\ref{eqn:fix.units}) we conclude that \( \til \divi(\til \u(y), \til \u(y)) = \til \u(y) \).
\end{proof}
The final lemma for this subsection tells us that a lift of the division map (as in the above lemmas) can be used as an equality test:
\begin{lemma}\label{lemma:division.lemma}
Let \( \CG \) be a local singular Lie groupoid and let us fix a point \( x_0 \in M \). Suppose \( \pi \colon \til \CG \to \CG \) is a quasi-étale chart and we have a point \( e \in \til \CG \) such that \( \pi(e) = \u(x_0)\). Let \( \delta \colon \til \CD \to \til \CG \) be a representation of division as in Lemma~\ref{lemma:delta.exists} and \( \til \u \colon \til M \to \til \CG \) be a lift of the units as in Lemma~\ref{lemma:fix.units}.

There exists an open neighborhood \( \CO  \subset \til \CG \) of \( e\) with the following properties:
\begin{itemize}
\item \( \CO \times_{\til \s, \til \s} \CO \subset \CD \)
\item For all \( (g, h) \in \CO \times_{\til \s, \til \s} \CO \) we have that \( g = h \) if and only if \( \til \delta (g,h) = \til u({\til \t(g)})  \). 
\end{itemize}
\end{lemma}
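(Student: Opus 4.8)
The plan is to reduce the assertion to the elementary groupoid fact that the map sending a pair of arrows $(g,h)$ with common source to the pair $(g,\delta(g,h))$ is a local diffeomorphism near the diagonal of units, and then to transport this fact across the quasi-étale chart $\pi$ using the theory of local representations of morphisms in $\QUED$. As a first, preparatory observation I would record that on a neighborhood $\CU$ of $e$ in $\til \CG$ one has $\til \delta(g,g) = \til \u(\til \t(g))$ for \emph{every} $g \in \CU$, not just for units. This is implicit in the construction of Lemma~\ref{lemma:fix.units}: there $\til \u$ is built so that $\mathrm{im}(\til \delta \circ \Delta) = \mathrm{im}(\til \u)$ on a neighborhood $\CU$ of $e$, where $\Delta$ is the diagonal; from the inverse law $\m(a,\i(a)) = \u(\t(a))$ in $\CG$ together with compatibility of $\til \delta$ with $\pi$ one gets $\pi \circ \til \delta \circ \Delta = \u \circ \til \t$, and composing with $\t$ gives $\til \t \circ \til \delta \circ \Delta = \til \t$; since $\til \u$ is a section of $\til \t$, this forces $\til \delta(g,g) = \til \u(\til \t(g))$. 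This already yields the ``$\Rightarrow$'' direction.

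Next I would work at the base level. Consider
\[ \Phi \colon \CD \to \CG \times_{\t,\t} \CG, \qquad \Phi(g,h) := (g,\delta(g,h)); \]
this is well defined since $\t(\delta(g,h)) = \t(g)$, and the target is quasi-étale because $\t$ is a $\QUED$-submersion. A routine computation with the local-groupoid axioms (using $\i \circ \i = \Id$, $\i(\m(a,b)) = \m(\i(b),\i(a))$, associativity, and the unit and inverse laws) shows that on a neighborhood $\CW_0$ of the diagonal of units the smooth map $(g,k) \mapsto (g,\delta(\i(k),\i(g)))$ is a two-sided inverse of $\Phi$; hence $\Phi$ is a local diffeomorphism in $\QUED$ near the unit diagonal. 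Now I would lift: by Proposition~\ref{prop:fiber.product.chart} the maps $\pi \times \pi$ on $\til \CG \times_{\til \s, \til \s} \til \CG$ and on $\til \CG \times_{\til \t, \til \t} \til \CG$ are quasi-étale charts, and their domains are manifolds since $\til \s, \til \t$ are submersions. After shrinking $\til \CD$ around $(e,e)$ so that its image lands in $\CW_0$, the map $\til \Phi(g,h) := (g, \til \delta(g,h))$ (which lands in $\til \CG \times_{\til \t, \til \t} \til \CG$, as $\til \t(\til \delta(g,h)) = \til \t(g)$) fits into a commuting square over $\Phi$ with these vertical maps, i.e.\ it is a local representation of $\Phi$. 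By Proposition~\ref{proposition:local presentations.of.morphisms}(a), $\til \Phi$ is a local diffeomorphism; and $\til \Phi(e,e) = (e,e)$, so there is an open neighborhood $\CW \subset \til \CD$ of $(e,e)$ on which $\til \Phi$ is injective.

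To finish, I would pick an open neighborhood $\CO$ of $e$ in $\til \CG$ with $\CO \subset \CU$ and $\CO \times_{\til \s, \til \s} \CO \subset \CW$ (possible since $\CW$ contains a product-type neighborhood of $(e,e)$ in $\til \CG \times_{\til \s, \til \s} \til \CG$); in particular $\CO \times_{\til \s, \til \s} \CO \subset \til \CD$, which is the first bullet. For the second bullet, let $(g,h) \in \CO \times_{\til \s, \til \s} \CO$. If $g=h$, then $\til \delta(g,h) = \til \delta(g,g) = \til \u(\til \t(g))$ by the preparatory step. Conversely, if $\til \delta(g,h) = \til \u(\til \t(g))$, then also $\til \delta(g,g) = \til \u(\til \t(g))$ (as $g \in \CU$), so $\til \Phi(g,h) = (g,\til \delta(g,h)) = (g,\til \delta(g,g)) = \til \Phi(g,g)$; since $(g,h)$ and $(g,g)$ both lie in $\CW$ and $\til \Phi$ is injective there, $h=g$.

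The main obstacle is the following. The naive move — apply $\pi$ to $\til \delta(g,h) = \til \u(\til \t(g))$ and invoke the equality-testing property of $\CG$ — only yields $\pi(g) = \pi(h)$, i.e.\ that $g$ and $h$ lie in a common, totally disconnected fibre of $\pi$, which is \emph{not} enough, since such a fibre can meet an arbitrarily small connected open set in more than one point. The real content is therefore to repackage the equality test as injectivity of the \emph{lifted pair map} $\til \Phi$ on a fixed neighborhood of $(e,e)$, after which one only needs to shrink $\CO$ so that $\CO \times_{\til \s, \til \s} \CO$ fits inside that neighborhood. The one genuinely computational point is checking that $\Phi$ has the stated smooth inverse near the unit diagonal, and one must keep track of the finitely many shrinkings that make all the expressions above well defined.
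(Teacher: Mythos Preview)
Your argument is correct, but it takes a different route from the paper's. The paper proceeds by a direct submanifold/dimension count: since $\til\delta$ is a submersion (already established in Lemma~\ref{lemma:delta.exists}) and $\til\u(\til M)$ is an embedded section of $\til\t$, the preimage $\til\delta^{-1}(\til\u(\til M))$ is an embedded submanifold of $\til\CD$ of dimension $\dim\til\CG$; the diagonal $\Delta(\CU)$ sits inside it (this is exactly your preparatory observation $\til\delta(g,g)=\til\u(\til\t(g))$) and has the same dimension, hence is open in it; shrinking $\CO$ so that $(\CO\times_{\til\s,\til\s}\CO)\cap\til\delta^{-1}(\til\u(\til M))\subset\Delta(\CU)$ finishes. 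By contrast, you package the equality test as injectivity of the pair map $\til\Phi(g,h)=(g,\til\delta(g,h))$, which you establish by first showing the downstairs map $\Phi(g,h)=(g,\delta(g,h))$ has a smooth two-sided inverse near the unit diagonal (a groupoid computation) and then invoking Proposition~\ref{proposition:local presentations.of.morphisms}(a) across the quasi-\'etale charts $\pi\times\pi$ furnished by Proposition~\ref{prop:fiber.product.chart}. Your approach is more structural and exhibits the underlying local diffeomorphism explicitly, but it imports more machinery (the downstairs inverse formula, the fiber-product chart proposition, and the representation lemma); the paper's argument is shorter because it only needs that $\til\delta$ is a submersion, which was already in hand.
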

\begin{proof}
First observe that \( \til u(\til M) \) is a cross section of a submersion and is therefore an embedded submanifold. 
Since \( \til \divi \) is a submersion, we know that the inverse image \( \til \divi^{-1} (\til u(\til M)) \) is an embedded submanifold of dimension equal to the dimension of \( \til \CG \). 

Now let \( \CU \subset \til \CG \) and \( \Delta \colon \CU \to \CD \) be as in the proof of Lemma~\ref{lemma:fix.units}.

From the construction of \( \til \u \) we know that:
\[\Delta (\CU) \subset \til \divi^{-1}(\til u(\til M) )\]
By a dimension count, \( \Delta (\CU) \) is actually an open subset of  \( \til \divi^{-1}(\til u (\til M )  )  \) containing \( (e,e) \). 
Now let \( \CO \subset \til \CG \) be an open neighborhood of \( e  \) such that:
\[  (\CO \times_{\til s,\til s} \CO) \cap \til \divi^{-1}(\til u( \til M ))) \subset  \Delta(\CU) \]
We claim that this is the desired open subset. 
Suppose \( g, h \in \CO \) have the same source and \( \til \divi(g,h) = \til u({\til \t(g)}) \). 
Then:
\[ (g,h) \in   (\CO \times_{s,s} \CO) \cap \til \divi^{-1}(\til u(\til M))  \]
Therefore \( (g,h) \in \Delta(\CU ) \) and \( g = h \).
\end{proof}

\subsection{Division structures and comparing maps}
It will be useful to formalize the properties from the previous three lemmas into a definition.
\begin{definition}
     Let \( \CG \grpd M \) be a local singular Lie groupoid.
     Suppose \( \pi \colon \til \CG \to \CG \) is a quasi-étale chart. A \emph{division structure} on \( \pi \) consists of the following:
     \begin{itemize}
     \item A smooth function: 
     \[ \til \u \colon  \til M \to \til \CG \]
     where \( \til M = \til \s(\til \CG) \subset M \).
     \item A smooth function:
     \[
     \til \delta \colon \til \CD \to \til \CG
     \]
     where \( \til \CD \subset \til \CG \fiber{\til s}{\til \s} \til \CG \) is an open neighborhood of the image of \( \til \u \times \til \u\).
     \end{itemize}
     We require these two functions to satisfy the following properties:
     \begin{enumerate}[(a)]
         \item \( \til \delta \) is a representation of division. In other words we have a commutative diagram:
         \begin{tikzcd}[column sep=huge]
            \til \CD \arrow[d, "(\pi \circ \pr_2) \times (\pi \circ pr_2)", swap] \arrow[r, "\til \divi"] & \til \CG \arrow[d, "\pi"] \\
            \CD  \arrow[r, "\divi"] & \CG
        \end{tikzcd}
        \item \( \til \u \) is a lift of the units. In other words, \( \pi \circ \til \u = \u |_{\til M} \).
        \item For all \( (g,h) \in \til \CD \) we have that:
        \[ \til \delta(g,h) = \til u \circ \til \t(g) \quad \leftrightarrow \quad  g = h \]
     \end{enumerate}
 \end{definition}
 The sum effect of Lemma~\ref{lemma:delta.exists}, Lemma~\ref{lemma:fix.units} and Lemma~\ref{lemma:division.lemma} is the claim that around any point \( x_0 \in M \) we can find a quasi-étale chart \( \til \pi \colon \til \CG \to \CG \) around \( \u(x) \) equipped with a division structure.

 The power of division structures is that they enable us to develop very useful tests for equality of certain maps. First let us establish some notation. Given a natural number \(n \) and \( U \subset \til \CG \) open, we will write: 
\[U^{(n)} := \overbrace{U \times_{\til s,\til t} U \times_{\til s,\til t} \cdots \times_{\til s,\til t} U}^{n\text{-times}} \]
In other words, \( U^{(n)} \) is the set of \( n \)-tuples of ``composable'' elements of \( U \).

 Our next lemma provides us with a way of determining when exactly a function on the composable arrows takes values only in units. 
\begin{lemma}\label{lemma:failure.function}
    Suppose \( \CG \grpd M \) and \( \CH \grpd N \) are a local singular Lie groupoid and \( \pi \colon \til \CG \to \CG \) is a quasi-étale chart equipped with a division structure.

    Suppose \(  \CU \subset \til \CG \) is an open neighborhood of the image of \( \til \u \). Given a natural number \( n \) suppose that we have a smooth function:
    \[ F \colon \CU^{(n)} \to \til \CG \]
    with the following properties:
    \begin{itemize}
        \item The image of \( \pi \circ F\) contains only unit arrows in \( \CG \).

        \item \( \til t \circ F = \til t \circ \pr_1 \)
        \item For all \( x \in \til u\inv(\CU) \), we have that \( F(\til u(x), \ldots , \til u(x)) = \til u(x) \)
    \end{itemize}
    Then there exists an open neighborhood \( \CO \subset \CU  \) of the image of \( \til \u \) with the property that for all \( (g_1, \ldots , g_n ) \in \CO^{(n)}\) we have that
    \[F(g_1, \ldots , g_n) = (\,\til u \circ \til t \,)(g_1)  \]
    In particular, for a small enough open neighborhood of \( \CU \), the function \( F \) takes values only in the image of \( \til u \).
\end{lemma}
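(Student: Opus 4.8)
The plan is to fiber the domain $\CU^{(n)}$ over the object manifold and force $F$ to be constant along the fibers, using the same ``continuous map into a totally disconnected set is locally constant'' mechanism that appears elsewhere in the paper (e.g.\ Lemma~\ref{lemma:homogeneous.space}).

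First I would introduce $\rho := \til\t \circ \pr_1 \colon \CU^{(n)} \to \til M$, where $\pr_1$ denotes the projection onto the first composable arrow. Since $\pr_1$ is a submersion (an iterated base change of the submersion $\til\t$) and $\til\t = \t\circ\pi$ is a submersion, $\rho$ is a surjective submersion. Moreover $\sigma(x) := (\til\u(x),\dots,\til\u(x))$ is a smooth section of $\rho$ whose image lies in $\CU^{(n)}$ (it is contained in $\til\u(\til M)^{n}\subset \CU^{n}$ and is composable since $\til\s\circ\til\u = \til\t\circ\til\u = \Id_{\til M}$). As in the shrinking step of the proof of Theorem~\ref{theorem:qeds.submersion.criteria}, I would then replace $\CU$ by a smaller open neighborhood $\CO \subset \CU$ of the image of $\til\u$ so that every fiber of $\rho$ restricted to $\CO^{(n)}$ is connected. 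One concrete way to do this: choose $\CO$ so that $\CO\cap\til\t\inv(x)$ and $\CO\cap\til\s\inv(x)$ are connected for all $x\in\til M$, then induct on $n$, using that a surjective submersion with connected base and connected fibers has connected total space.

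Next I would identify $\pi\circ F$ exactly. Fix $p = (g_1,\dots,g_n) \in \CO^{(n)}$. Since $\pi(F(p))$ is a unit arrow of $\CG$, write $\pi(F(p)) = \u(z)$; applying $\t$ and using the hypothesis $\til\t\circ F = \til\t\circ\pr_1$ gives $z = \t(\u(z)) = \til\t(F(p)) = \til\t(g_1) = \rho(p)$. Hence $\pi\circ F = \u\circ\rho$ on $\CO^{(n)}$, so $F$ maps the connected set $\rho\inv(x)\cap\CO^{(n)}$ into the single fiber $\pi\inv(\u(x))$, which is totally disconnected. Since $F$ is smooth (hence continuous and locally constant on that fiber) and the fiber is connected, $F$ is constant on $\rho\inv(x)\cap\CO^{(n)}$. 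Thus $F$ is constant along the fibers of the surjective submersion $\rho$, so it descends to a smooth map $\bar F \colon \til M \to \til\CG$ with $F = \bar F\circ\rho$. Evaluating on the section $\sigma$ and invoking the hypothesis $F(\til\u(x),\dots,\til\u(x)) = \til\u(x)$ gives $\bar F = \til\u$, whence $F(g_1,\dots,g_n) = \til\u(\til\t(g_1))$ on all of $\CO^{(n)}$; in particular $F$ takes values in the image of $\til\u$.

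I do not expect a deep obstacle here: the substance of the argument is the short continuity-plus-total-disconnectedness observation, which is completely standard in this paper. The only point that needs a little care is the shrinking step --- arranging that $\CO$ can be taken as an open subset of $\til\CG$ (as the statement demands) in such a way that all fibers of $\rho|_{\CO^{(n)}}$ become connected --- and this is precisely the kind of submersion-neighborhood manipulation already carried out in the proof of Theorem~\ref{theorem:qeds.submersion.criteria}.
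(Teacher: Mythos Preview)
Your proposal is correct and follows essentially the same route as the paper: both arguments show that $F$ is constant along the fibers of $\rho = \til t \circ \pr_1$ by using that $\pi\circ F$ lands in the units (so $F$ maps each connected $\rho$-fiber into a single totally disconnected $\pi$-fiber), and then identify $F$ with $\til u \circ \rho$ by evaluating on the section $(\til u,\ldots,\til u)$. The only cosmetic difference is that the paper first phrases the constancy step infinitesimally---showing $\ker T\rho = \ker TF$ via a path argument and then invoking the normal form for a submersion around a section---whereas you argue it directly at the level of connected fibers; your shrinking step and the paper's normal-form step serve the same purpose.
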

\begin{proof}
    Notice that since \( \til t \circ \pr_1 \) is a submersion and
\[ \til \t \circ F = \til \t \circ \pr_1 \]
this implies that \( F \) has rank at least equal to the rank of \( \til \t \circ \pr_1 \) which is equal to the dimension of \( M \). Now we claim that the kernel distribution of \( TF \) contains the kernel distribution of \( T (\til t \circ \pr_1) \). By a dimension count, this will imply their kernel distributions are equal.

To see this, suppose \( \gamma \) is a path tangent to the kernel distribution of \( T\til t \circ \pr_1 \).
We know that \( \til t \circ F \circ \pi \) is constant.
Furthermore, since \(\pi \circ  F \) contains only unit elements, we can conclude that \( F \circ \gamma \) is a path in \( \pi\)-fiber of a unit element in \( \CG \).
However, the fibers of \( \pi \) are totally disconnected, and so this implies that \( F \circ \gamma \) is a constant path.
Therefore, \( TF \) and \( T(\til t \circ \pr_1) \) are submersions with identical kernel distributions

Now, notice that the map:
\[ \til \u^{(n)} \colon \til M \to \til \CG^{(n)} \qquad x \mapsto ( \til u(x) , \til u(x) , \ldots , \til u(x) ) \]
is a section of \( \til t \circ \pr_1\).
By the local normal form theorem for submersions around a section, it is possible to find an open neighborhood \( \CW \) of the image of \( \til \u^{(n)} \) of the domain of \( F \) with the property that the fibers of \( \til t \circ \pr_1 \) and \( F \) are connected and coincide.

Now we claim that 
\[\til F|_{\CW} = \til u \circ \til t \circ \pr_1 |_{\CW} \]
To see why, first notice that the fibers of these two functions coincide. Furthermore 
\[ F(\til u(x), \ldots , \til u(x)) = \til \u(x) =  \til u \circ \til t \circ \pr_1(\til u(x), \ldots , \til u(x)) \]
Since the fibers of \( \til F|_{\CW}\) and \( \til u \circ \til t \circ \pr_1 |_{\CW}\) are equal and they agree on one element of each fiber, the two functions are equal.

Therefore, the proof is completed by choosing an open neighborhood \( \CO \subset \CU \) of the units with the property that \( \CO^{(n)} \subset \CW \).
\end{proof}
Our next lemma is just an upgrade of the previous lemma.
It provides us with a very useful equality test.
\begin{lemma}\label{lemma:equality.test}
   Suppose \( \CG \grpd M \) is a local singular Lie groupoid and \( \pi \colon \til \CG \to \CG \) is a quasi-étale chart equipped with a division structure.

    Suppose \( \CU \subset \til \CG \) is an open neighborhood of the image of \( \til \u \). Given a natural number \( n \), suppose that we have a pair of smooth functions:
    \[ \alpha \colon \CU^{(n)} \to \til \CG \qquad \beta \colon \CU^{(n)} \to \til \CG\]
    with the following properties:
    \begin{itemize}
        \item \( \pi \circ \alpha = \pi \circ \beta \)
        \item  \( \til t \circ \alpha = \til t \circ \pr_1\)
        \item For all \( x \in \til u\inv(\CU) \), we have that  
        \[ \alpha(\til u(x), \ldots , \til u(x)) = \beta(\til u(x), \ldots , \til u(x)) = \til u(x) \]
    \end{itemize}
    Then there exists an open neighborhood \( \CO \subset \CU \) of the image of \( \til \u\) such that \( \alpha|_{\CO^{(n)}} = \beta|_{\CO^{(n)}} \)
\end{lemma}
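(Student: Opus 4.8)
The plan is to divide $\alpha$ by $\beta$ using the chosen division structure $(\til u,\til\delta)$ on $\pi$, so as to reduce the statement to Lemma~\ref{lemma:failure.function}. Since $\pi\circ\alpha=\pi\circ\beta$ and $\til s=s\circ\pi$, we get $\til s\circ\alpha=\til s\circ\beta$, so the pair $\big(\alpha(\mathbf g),\beta(\mathbf g)\big)$ lies in $\til\CG\times_{\til s,\til s}\til\CG$ for every $\mathbf g\in\CU^{(n)}$. On the image of the diagonal section $\mathbf g=(\til u(x),\dots,\til u(x))$ the third hypothesis gives $\alpha(\mathbf g)=\beta(\mathbf g)=\til u(x)$, and $(\til u(x),\til u(x))$ belongs to the open domain $\til\CD$ of $\til\delta$; by continuity, after shrinking $\CU$ to a smaller open neighborhood of the image of $\til u$, we may assume $\big(\alpha(\mathbf g),\beta(\mathbf g)\big)\in\til\CD$ for all $\mathbf g\in\CU^{(n)}$. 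I then set
\[ F\colon \CU^{(n)}\to\til\CG, \qquad F(\mathbf g):=\til\delta\big(\alpha(\mathbf g),\beta(\mathbf g)\big). \]

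The next step is to verify the three hypotheses of Lemma~\ref{lemma:failure.function} for $F$. Compatibility of $\til\delta$ with $\pi$ (property (a) of a division structure) gives $\pi\circ F=\delta\circ\big((\pi\circ\alpha)\times(\pi\circ\beta)\big)=\delta\circ\big((\pi\circ\alpha)\times(\pi\circ\alpha)\big)$, and $\delta(g,g)=m(g,i(g))=u(t(g))$ is a unit, so $\pi\circ F$ takes values only among units. For the target condition, $\til t\circ F=t\circ\pi\circ\til\delta\circ(\alpha,\beta)=t\circ\delta\circ\big((\pi\circ\alpha)\times(\pi\circ\beta)\big)$, and since $t\circ\delta(g,h)=t(g)$ this equals $t\circ\pi\circ\alpha=\til t\circ\alpha=\til t\circ\pr_1$ by the second hypothesis. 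Finally, for $x\in\til u\inv(\CU)$ the third hypothesis together with property (c) of the division structure gives $F(\til u(x),\dots,\til u(x))=\til\delta(\til u(x),\til u(x))=\til u(\til t(\til u(x)))$, and $\til t\circ\til u=\Id$ because $\pi\circ\til u=u$ and $t\circ u=\Id$, so this is $\til u(x)$.

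Lemma~\ref{lemma:failure.function} then produces an open $\CO\subset\CU$ containing the image of $\til u$ such that $F(g_1,\dots,g_n)=\til u(\til t(g_1))$ for all $(g_1,\dots,g_n)\in\CO^{(n)}$. Unwinding this on $\CO^{(n)}$, we have $\til\delta\big(\alpha(\mathbf g),\beta(\mathbf g)\big)=\til u(\til t(g_1))=\til u(\til t(\alpha(\mathbf g)))$, the last equality again by $\til t\circ\alpha=\til t\circ\pr_1$; property (c) of the division structure (the equality test) then forces $\alpha(\mathbf g)=\beta(\mathbf g)$, which is exactly the conclusion. I do not expect a genuine obstacle here: the entire content is the observation that dividing $\alpha$ by $\beta$ turns the problem into the ``takes values only in units'' situation already settled by Lemma~\ref{lemma:failure.function}. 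The only points requiring care are arranging that $\big(\alpha(\mathbf g),\beta(\mathbf g)\big)$ lands in the domain $\til\CD$ of division (handled by shrinking $\CU$) and keeping straight which of the identities $\til s=s\circ\pi$, $\til t=t\circ\pi$, $\til t\circ\til u=\Id$, $t\circ\delta(g,h)=t(g)$, and $\delta(g,g)=u(t(g))$ is invoked at each step.
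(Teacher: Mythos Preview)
Your proof is correct and follows essentially the same route as the paper: define $F=\til\delta\circ(\alpha,\beta)$, verify the three hypotheses of Lemma~\ref{lemma:failure.function}, and then invoke the equality test built into the division structure to conclude $\alpha=\beta$. If anything you are slightly more careful than the paper, which does not explicitly arrange that $(\alpha(\mathbf g),\beta(\mathbf g))$ lands in $\til\CD$ before forming $F$; your shrinking argument fills that small gap.
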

\begin{proof}
    Let:
    \[ F \colon \CU^{(n)} \to \til \CG  \qquad F(g_1,\ldots , g_n) := \til \delta(\alpha(g_1, \ldots , g_n), \beta(g_1,\ldots , g_n)) \]
    We claim that \( F\) satisfies the hypotheses of Lemma~\ref{lemma:failure.function}.
    For the first bullet point, note that for \( \overline{g} \in \CU^{(n)}\):
    \[ \pi \circ F (\overline{g}) = \delta(\pi \circ \alpha(\overline{g}), \pi \circ \beta ({\overline{g}})) \]
    Since we have assume that \( \pi \circ \alpha = \pi \circ \beta\) it follows that dividing them in \( \til \CG \) results in a unit element. Therefore \( \pi \circ F \) takes values only in unit elements.
    For the second bullet point, note that since \( \til t \circ \til \delta = \pr_1 \) it follows that \( \til t \circ F = \til t \circ \alpha \).
    By assumption, \( \til t \circ \alpha = \til t \circ \pr_1 \) so the bullet point holds.
    Finally, given \( x \in \til M \), we have that:
    \begin{align*}
    F(\til u(x), \ldots , \til u(x)) &= \til \delta( \alpha(\til u(x), \ldots , \til u(x)), \beta(\til u(x), \ldots , \til u(x))) \\
    &= \til \delta( \til u(x), \til u (x)) \\
    &= \til u(x)
    \end{align*}
    Since \( F \) satisfies the hypotheses of Lemma~\ref{lemma:failure.function}, it follows that there exists an open neighborhood \( \CO \subset \CU \) of \( e \) with the property that:
    \[ \forall (g_1, \ldots , g_n ) \in \CO^{(n)} \qquad  \til \delta(\alpha(g_1, \ldots , g_n), \beta(g_1, \ldots , g_n)) = \til u( \til t(g_1))  \]
    By Lemma~\ref{lemma:division.lemma}, we conclude that:
    \[\forall (g_1, \ldots , g_n ) \in \CO^{(n)} \qquad  \alpha(g_1, \ldots , g_n) = \beta(g_1, \ldots , g_n) \]
\end{proof}
\subsection{Local Existence of charts}
\begin{proposition}[Existence of local groupoid charts]\label{prop:existence.of.local.local.groupoid.charts}
Suppose \( \CG \grpd M \) is a singular Lie groupoid and let \( x_0 \in M \) be fixed. There exists a local groupoid chart \( \pi \colon \til \CG \to \CG  \) around \( x_0 \).
\end{proposition}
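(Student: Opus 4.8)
The plan is to equip an arbitrary quasi-étale chart around $\u(x_0)$ with a division structure, reconstruct all of the local groupoid structure maps on the chart out of that division structure, and then verify the groupoid axioms in a neighborhood of the units by means of the equality test. First I would combine Lemma~\ref{lemma:delta.exists}, Lemma~\ref{lemma:fix.units} and Lemma~\ref{lemma:division.lemma} to obtain a quasi-étale chart $\pi \colon \til \CG \to \CG$ around $\u(x_0)$, a point $e \in \til \CG$ with $\pi(e) = \u(x_0)$, and a division structure $(\til \u \colon \til M \to \til \CG,\ \til \divi \colon \til \CD \to \til \CG)$ on it, where $\til M \subset M$ is an open neighborhood of $x_0$. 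Put $\til s := s \circ \pi$ and $\til t := t \circ \pi$; these are submersions since $\pi$, $s$ and $t$ are local subductions, and $\til \CG$ is a manifold because $\pi$ is a quasi-étale chart.

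Next I would build the remaining structure maps following the standard recipe for recovering a groupoid from its division map: on suitable open neighborhoods of the units set $\til \i(g) := \til \divi(\til \u(\til s(g)), g)$ and $\til \m(g,h) := \til \divi(g, \til \i(h))$, the map $\til \m$ being well defined on an open neighborhood of the units in $\til \CG \fiber{\til s}{\til t} \til \CG$. Using property (a) of a division structure, $\pi \circ \til \divi = \divi \circ (\pi \times \pi)$, together with $\pi \circ \til \u = \u$, a direct computation shows that $\pi$ intertwines each of $\til s, \til t, \til \u, \til \i, \til \m$ with the corresponding structure map of $\CG$; in particular $\pi \circ \til \i = \i \circ \pi$ and $\pi \circ \til \m = \m \circ (\pi \times \pi)$ wherever the left-hand sides are defined. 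This immediately yields the source-and-target compatibility axioms (LG1)--(LG3), and it shows that the germ $[\pi]$ is a local groupoid morphism covering the inclusion $\til M \into M$, so $\pi$ will be a local groupoid chart as soon as the algebraic axioms are established.

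It then remains to check the unit laws, the inverse laws and associativity, (LG4)--(LG6). Each of these is an identity $\alpha = \beta$ between two smooth maps $\CU^{(n)} \to \til \CG$ obtained by composing the structure maps, for an appropriate $n$ and a small enough open neighborhood $\CU$ of the image of $\til \u$, and for each I would verify the hypotheses of Lemma~\ref{lemma:equality.test}: that $\pi \circ \alpha = \pi \circ \beta$, because $\CG$ satisfies the identity and $\pi$ intertwines the structure maps; that $\til t \circ \alpha = \til t \circ \pr_1$, because the target of any composite of groupoid operations equals the target of its leftmost argument; and that $\alpha(\til \u(x), \dots, \til \u(x)) = \beta(\til \u(x), \dots, \til \u(x)) = \til \u(x)$, by an easy induction using $\til \divi(\til \u(x), \til \u(x)) = \til \u(x)$ (property (c)) and hence $\til \i(\til \u(x)) = \til \u(x)$. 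Lemma~\ref{lemma:equality.test} then furnishes, for each of these finitely many identities, an open neighborhood of the image of $\til \u$ on which $\alpha = \beta$; intersecting them produces a single open $\CO \subset \til \CG$, and replacing $\til \CG$ by $\CO$ (and shrinking the domains of $\til \m$ and $\til \i$ accordingly) gives a local groupoid $\til \CG \grpd \til M$. It is a local \emph{Lie} groupoid since $\til \CG$ and $\til M$ are manifolds and $\til s$ is a submersion between manifolds, hence a $\QUED$-submersion by Example~\ref{example:submersion.is.qeds.submersion}; since $[\pi]$ is a local groupoid morphism over the inclusion, this is the desired local groupoid chart around $x_0$.

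I expect the main obstacle to be bookkeeping rather than analysis: one has to arrange every groupoid axiom as a genuine equality of two maps $\CU^{(n)} \to \til \CG$ with a common domain, keeping careful track of the partially-defined operations $\til \m$ and $\til \i$ and repeatedly passing to smaller neighborhoods of the units, and one has to confirm the two side conditions of Lemma~\ref{lemma:equality.test} (compatibility of the target with $\pr_1$, and agreement with $\til \u$ along the units) for each composite that arises. The one genuinely substantive point — that two maps into a quasi-étale chart which agree after composing with $\pi$ and which agree along the units must already coincide near the units — is precisely what Lemma~\ref{lemma:failure.function} and Lemma~\ref{lemma:equality.test} provide, so no new estimate is required.
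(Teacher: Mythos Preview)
Your proposal is correct and follows essentially the same route as the paper: start from a quasi-étale chart around $\u(x_0)$ equipped with a division structure (via Lemmas~\ref{lemma:delta.exists}--\ref{lemma:division.lemma}), define $\til \i$ and $\til \m$ from $\til \divi$ exactly as you do, check the source/target compatibilities directly from the intertwining with $\pi$, and then verify (LG4)--(LG6) by applying Lemma~\ref{lemma:equality.test} to the obvious pairs $\alpha,\beta$. The only cosmetic difference is that the paper writes out each axiom's $\alpha$ and $\beta$ explicitly rather than describing the pattern once.
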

\begin{proof}
Let \( \pi \colon \til \CG' \to \CG \) be a quasi-étale chart equipped with a division structure around \( x_0 \). We saw earlier that such things exist.

Now let:
\[   \til \i \colon \CI \to \til \CG \qquad g \mapsto \divi(\til \u(\s(g)), g) \]
\[ \til \m \colon \CM \to \til \CG' \qquad (g,h) \mapsto \divi (g , \til \i (h)) \]
where \( \CI \subset \til \CG' \) and \( \CM \subset \til \CG' \times_{\til \s, \til t} \til \CG' \) are the maximal open sets which make these well-defined.

We claim that there exists an open neighborhood \( \til \CG \) of the image of \( \til \u \) which makes the above maps a local groupoid structure on \( \til \CG \) and \( \pi|_{\CG} \) is a local groupoid chart.

We will begin by proving that \( \pi \) is compatible with these structure maps.
\begin{enumerate}[(1)]
    \item (Compatibility with source and target) By definition \( \til \s = \s \circ \pi \) and \( \til \t = \t \circ \pi  \) so this condition is automatic.
    \item (Compatibility with multiplication) Consider the following computation where we apply compatibility of \( \til \delta\) with \( \pi \) multiple times:
    \begin{align*} 
    \pi \circ \til \m (g,h) &= \pi \circ \til \delta(g , \til \delta( \til \u \circ \til \s (h), h) ) \\
    &= \delta ( \pi(g), \pi \circ \til \delta( \til \u \circ \til \s(h), h) ) \\
    &= \delta(\pi(g), \delta( \pi \circ \til \u \circ \til \s (h), \pi(h))) \\
    &= \delta(\pi(g), \delta( \u \circ \s(\pi(h), \pi(h)))
    \end{align*}
    Since \( \CG \) is a local groupoid, there exists an open neighborhood of the units where 
    \[\delta(\pi(g), \delta( \u \circ \s(\pi(h), \pi(h))) = \m (g,h) \]
\end{enumerate}
A similar calculation to what we have done above tells us that we can find an open neighborhood of the units \( \til \CG \) where:
\[ \pi ( \til \i(g)) = \i( \pi(g))  \]
We can therefore assume without loss of generality that the whole ambient space \( \til \CG' \) has the property \( \til  \m \) and \( \til  \i \) are compatible with \( \pi\).

Now we will show the axioms (LG1-6) of a local groupoid must each hold in an open neighborhood of the image of \( \til \u \).
\begin{enumerate}[(LG1)]
    \item (Compatibility of source and target with unit) We will do the computation for the source as the proof of target is symmetrical:
    \[ \til \s \circ \til \u = s \circ \pi \circ \til \u = \s \circ \u = \u \]
    \item (Compatibility of source and target with multiplication) We will do the computation for the source as the proof of target is symmetrical:
   \[ \til \s \circ \til \m (g,h) = \s \circ \pi \circ \til \m(g,h) = \s \circ \m(\pi(g),\pi(h)) = \s \circ \pi(h) = \til \s(h)  \]
   \item (Compatibility of source and target with inverse)
   \[ \til s \circ \til \i(g) = \s \circ \pi \circ \til \i(g) = \s \circ \i \circ \pi(g) = t \circ \pi(g) = \til \t(g) \]
   \item (Left and Right unit law) We will show the left unit.
   First, observe that one can choose an open neighborhood \( \CU \subset \til \CG \) of the image of \( \til \u \) such that \( (\til u(\til \t(g)) , g) \in \CM \) for all \( g \in \CO \).

Given such an \( \CU \), consider the two functions:
\[ \alpha \colon \CU \to \til \CG \qquad  g \mapsto  \til \m( \til \u(\til \t(g)),g)  \]
\[ \beta \colon \CU \to \til \CG \qquad  g \mapsto   g   \]
Since \( \pi \) preserves our structure maps, it follows that \( \pi \circ \alpha = \pi \circ \beta\).
Furthermore, \( \til t \circ \alpha = \til \t \).
Lastly, if \( x \in \til M \) we have that:
\[ \alpha( \til u(x)) = \til \m ( \til \u(x) , \til u(x)) = \til u(x)\]
Therefore, the pair \( \alpha\) and \( \beta\) satisfy the hypotheses of Lemma~\ref{lemma:equality.test} (in the case where \( n = 1 \)) which completes the proof.
\item (Left and right inverse laws) We will show the proof for right inverse. The left inverse case is symmetrical.

As we did in the proof of left unit. Consider a pair of functions:
\[ \alpha \colon \CU \to \til \CG \qquad g \mapsto \til \m(g, \til \i(g) ) \]
\[ \beta \colon \CU \to \til \CG \qquad g \mapsto \til \u (\til \t(g)))\]
where \( \CU \) is an open neighborhood of the image of \( \til \u \) that makes \( \alpha \) and \( \beta \) well-defined.

Observe that since \( \pi \) is compatible with our structure maps and \( \CG \) satisfies the unit axiom, it follows that \( \pi \circ \alpha = \pi \circ \beta \).
Furthermore, \( t \circ \alpha = \til t \). 
Finally, if \(x \in \til M \) we have that:
\[ \alpha(\til u(x)) = \til \m ( \til \u (x) , \til \i (\til \u(x))) = \til \m (\til \u(x), \til \u(x)) = \til \u(x) = \beta( \til u(x)) \]
Therefore, \( \alpha\) and \( \beta\) satisfy the hypotheses of Lemma~\ref{lemma:equality.test} and it follows that \( \alpha = \beta \) in an open neighborhood of \( e \).
\item (Associativity Law) As we did for the previous two axioms. We consider a pair of functions:
\[ \alpha \colon \CO^{(3)} \to \til \CG \qquad (g,h,k) \mapsto \til \m (g, \til \m(h,k)) \]
\[\beta \colon \CO^{(3)} \to \til \CG \qquad (g,h,k) \mapsto \til \m( \til \m(g,h),k) ) \]
where \( \CU \) is an open neighborhood of the image of \( \til \u \) and is chosen in such a way that \( \alpha \) and \( \beta \) are well-defined.

Since \( \pi \) preserves these structure maps, and \( \CG \) satisfies associativity, it follows that \( \pi \circ \alpha = \pi \circ \beta\).
Furthermore, \( \til t \circ \alpha = \til t \circ \pr_1 \).
Finally, given \( x \in \til M \) we have that:
\[ \alpha( \til u (x),\til u (x),\til u (x) ) = \til m(\til u (x), \til m(\til u (x),\til u (x))) = \til m(\til u(x), \til u(x)) = \til u(x) \]
A similar calculation holds for \( \beta \).

Therefore, it follows that \( \alpha \) and \( \beta \) satisfy the hypotheses of Lemma~\ref{lemma:equality.test}, and therefore \( \alpha = \beta\) in some open neighborhood of the image of \( \til \u\).
\end{enumerate}
\end{proof}
 At this point, we have proved a local (non-wide) form of the existence part of the proof of Theorem~\ref{theorem:existence.and.uniqueness.of.LGC}. That is, every singular Lie groupoid admits a local groupoid chart around any given object.

 \subsection{Uniqueness of charts}
Before we explain why ``wide'' local groupoid charts exist. We first need to prove a local form of the uniqueness portion of Theorem~\ref{theorem:existence.and.uniqueness.of.LGC}.
 
First, we will observe that the local groupoid structure on a local groupoid chart is uniquely (in a neighborhood of units) determined by its unit embedding.
\begin{lemma}\label{lemma:unique.lift}
Let \( \CG \grpd M \) be a singular Lie groupoid. Suppose \( \pi \colon \til \CG \to \CG \) is a quasi-étale chart we have two different local groupoid structures on \( \til \CG \) which both make \( \pi \) into a local groupoid chart and which have the same set of units \( \til M \subset M \). 

Let  
\[\til \u \colon \til M \to \til \CG  \qquad  \til \u '\colon \til M \to \CG \] 
denote the two different unit embeddings.
Then for all \( x_0 \in \til M \) the local groupoid structures on \( \til \CG \) are equal in a neighborhood of \( \til u(x_0) \) if and only if \( \til \u \) and \( \til \u' \) are equal in a neighborhood of \( x \).
\end{lemma}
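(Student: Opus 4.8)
\emph{The forward implication is immediate}: the unit embedding is one of the structure maps of a local groupoid, so if the two structures agree on a neighbourhood $\CU$ of $\til\u(x_0)$ in $\til\CG$, then $\til\u=\til\u'$ on $\til\u\inv(\CU)$, which is a neighbourhood of $x_0$ in $\til M$. I would spend the rest of the proof on the converse.

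So suppose $\til\u=\til\u'$ on a neighbourhood of $x_0$. The first move is bookkeeping: replace $\til M$ by a smaller neighbourhood of $x_0$ on which $\til\u=\til\u'$, replace $\til\CG$ by $\til\s\inv(\til M)$, and shrink $\til\CG$ once more to an open neighbourhood of the units small enough that each of the finitely many products occurring below is defined (being defined is an open condition). On this smaller chart $\pi$ is still quasi-étale and both structures restrict to local groupoid structures with common unit embedding $\til\u=\til\u'$; note also that $\til\s=\s\circ\pi$ and $\til\t=\t\circ\pi$ depend only on $\pi$, hence are common to both structures. I would then record that the first structure equips $\pi$ with a division structure in the sense introduced earlier in this section: writing $\til\delta(g,h):=\til\m(g,\til\i(h))$, property (a) holds because $[\pi]$ is a local groupoid morphism for that structure (so $\pi$ is compatible with multiplication and, near the units, with the inverse); (b) is $\pi\circ\til\u=\u$; and (c) follows from the local groupoid axioms by the standard cancellation computation.

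The heart of the argument is a single application of the equality test, Lemma~\ref{lemma:equality.test}, with $n=2$. Take $\CU$ a neighbourhood of the image of $\til\u$ on which $\til\m$ and $\til\m'$ are both defined on $\CU^{(2)}$, and set $\alpha:=\til\m|_{\CU^{(2)}}$ and $\beta:=\til\m'|_{\CU^{(2)}}$. Then $\pi\circ\alpha=\m\circ(\pi\times\pi)=\pi\circ\beta$ because $[\pi]$ is a local groupoid morphism for both structures; $\til\t\circ\alpha=\til\t\circ\pr_1$ by axiom (LG2); and for $x$ in the base, the unit laws of the two structures together with $\til\u=\til\u'$ give $\alpha(\til\u(x),\til\u(x))=\til\u(x)=\til\u'(x)=\beta(\til\u(x),\til\u(x))$. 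Lemma~\ref{lemma:equality.test} then produces a neighbourhood $\CO$ of the units with $\til\m|_{\CO^{(2)}}=\til\m'|_{\CO^{(2)}}$.

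Finally, once $\til\m=\til\m'$ and $\til\u=\til\u'$ near the units, the inverses agree automatically: for $g$ near a unit, $\til\i(g)$ and $\til\i'(g)$ both solve $\til\m(g,-)=\til\u(\til\t(g))$ and $\til\m(-,g)=\til\u(\til\s(g))$, and inverses in a local groupoid are unique wherever the relevant products are defined. Hence all three structure maps of the two structures coincide on a neighbourhood of $\til\u(x_0)$. I expect the only genuinely delicate point to be the nested shrinkings — ensuring that after passing to the smaller chart the hypotheses of Lemma~\ref{lemma:equality.test} really hold and that every product written down is defined; the geometric content is carried entirely by the equality test.
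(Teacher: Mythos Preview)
Your proposal is correct and follows essentially the same route as the paper: both reduce the converse to a single application of the equality test (Lemma~\ref{lemma:equality.test}) with $n=2$, $\alpha=\til\m$ and $\beta=\til\m'$, after observing that the forward implication is trivial. Your write-up is in fact a bit more careful than the paper's, which omits the remark that the first local groupoid structure supplies the division structure needed to invoke Lemma~\ref{lemma:equality.test}, and also does not spell out why $\til\i=\til\i'$ once the multiplications agree.
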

\begin{proof}
Let \( \til \m \) and \( \til \m ' \) denote the two different multiplication maps and let \( x \in \til M \) be fixed.
Note that one direction is clear, if the two local groupoid structures are equal then they must have the same unit embedding.
Therefore, we only need to show that if the unit embeddings are equal then \( \m \) and \( m' \) are equal in an open neighborhood of \( e := \til u(x_0) = \til u'(x_0) \).

By assumption, \( \pi \circ \til m = \pi \circ \til m '\). Furthermore, \( \til t \circ \til m = \til t \circ \pr_1 \). Finally, observe that for all \( x \in \til M \), we have that:
\[ \til m(\til u(x), \til u(x)) = \til m' (\til u(x), \til u(x)) \]
Therefore, the functions \( \alpha = \til \m\) and \( \beta = \til \m'\) satisfy the hypotheses of Lemma~\ref{lemma:equality.test} so they must be equal.
\end{proof}
Our next lemma tells us that we can always construct an isomorphism between any two local groupoid charts. 
\begin{lemma}\label{lemma:isomorphism.exists}
Suppose \( \pi \colon \til \CG \to \CG \) and \( \pi' \colon \til \CG' \to \CG \) are local groupoid charts around \( x_0 \in M \). Let \( \til \u \) and \( \til \u' \) be the respective unit embeddings. Then there exists an open neighborhood \( \CU \) of \( x_0\) and an isomorphism of local groupoids \( [F] \colon \til \CG|_{\CU} \to \til \CG'|_{\CU} \) such that \( [\pi] = [\pi'] \circ [F] \).
\end{lemma}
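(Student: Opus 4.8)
The plan is to build the isomorphism $F$ explicitly, as a ``gauge‑corrected'' lift of $\pi$ through $\pi'$, and then to verify the groupoid axioms for it with the equality test. Throughout we equip $\pi$ and $\pi'$ with the division structures coming from their local groupoid structures: write $\til\u,\til\m,\til\i,\til\delta$ (with $\til\delta(g,h)=\til\m(g,\til\i(h))$) for the structure maps on $\til\CG$ and $\til\u',\til\m',\til\i',\til\delta'$ for those on $\til\CG'$, and after shrinking $\til\CG$ and $\til\CG'$ we may assume the cancellation property $\til\delta(g,h)=\til\u(\til\t(g))\Leftrightarrow g=h$ (and its primed version) holds, exactly as in the first lemma of this section. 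Put $e:=\til\u(x_0)$, $e':=\til\u'(x_0)$, so $\pi(e)=\u(x_0)=\pi'(e')$, and note that $\til\s=\s\circ\pi$, $\til\t=\t\circ\pi$, and similarly for the primed maps.

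First I would use that $\pi'$ is a local subduction to lift $\pi$ (as a plot near $e$) through $\pi'$ at the point pair $(e,e')$, obtaining a smooth $F_1$, defined near $e$, with $\pi'\circ F_1=\pi$ and $F_1(e)=e'$. Since $(\pi,\pi',F_1)$ is a local representation of $\Id_\CG$, a local diffeomorphism, Proposition~\ref{proposition:local presentations.of.morphisms}(a) gives that $F_1$ is a local diffeomorphism (in particular $\dim\til\CG=\dim\til\CG'$). As $F_1$ need not carry units to units I would correct it: for $x$ near $x_0$ set $\kappa(x):=\til\delta'\big(\til\u'(x),F_1(\til\u(x))\big)$, which is defined because $\til\s'(F_1(\til\u(x)))=\s(\pi(\til\u(x)))=x=\til\s'(\til\u'(x))$, and define
\[ F(g):=\til\m'\big(\kappa(\til\t(g)),\,F_1(g)\big). \]
Compatibility of $\pi'$ with $\til\delta'$ gives $\pi'(\kappa(x))=\delta(\u(x),\u(x))=\u(x)$, so together with $\pi'\circ F_1=\pi$ and the left unit law in $\CG$ one gets $\pi'\circ F=\pi$; hence $F$ is again a local representation of $\Id_\CG$, so $F$ is a local diffeomorphism near $e$. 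Moreover the groupoid identity $\til\m'(\til\delta'(a,b),b)=a$, applied with $a=\til\u'(x)$ and $b=F_1(\til\u(x))$ (same source $x$), yields $F\circ\til\u=\til\u'$. One also checks directly from $\pi'\circ F=\pi$ that $\til\s'\circ F=\til\s$ and $\til\t'\circ F=\til\t$, so $F$ maps composable pairs to composable pairs.

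It remains to check $F$ is multiplicative near the units, for then $[F]$ will be an isomorphism of local groupoids covering $\Id$ with $[\pi]=[\pi']\circ[F]$ (its inverse $[F^{-1}]$ being multiplicative by applying $F^{-1}$ to the multiplication identity, and carrying $\til\u'$ to $\til\u$). For this I would invoke the equality test, Lemma~\ref{lemma:equality.test}, with $n=2$, for $\til\CG$ with its division structure $(\til\u,\til\delta)$, applied to
\[ \alpha:=F^{-1}\circ\til\m'\circ(F\times F),\qquad \beta:=\til\m, \]
both defined near the units on $\til\CG\times_{\til\s,\til\t}\til\CG$. Using $\pi\circ F^{-1}=\pi'$ (a consequence of $\pi'\circ F=\pi$ and $F$ being a local diffeomorphism) together with compatibility of $\pi$ with $\til\m$ and of $\pi'$ with $\til\m'$, one gets $\pi\circ\alpha=\pi\circ\beta$; using $\til\t'\circ F=\til\t$ and the target law for $\til\m'$ one gets $\til\t\circ\alpha=\til\t\circ\pr_1$; and using $F\circ\til\u=\til\u'$ and idempotency of units one gets $\alpha(\til\u(x),\til\u(x))=F^{-1}(\til\u'(x))=\til\u(x)=\beta(\til\u(x),\til\u(x))$. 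Lemma~\ref{lemma:equality.test} then gives $\alpha=\beta$ near the units, i.e. $F\circ\til\m=\til\m'\circ(F\times F)$, completing the proof.

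I expect the only real subtlety — and the reason the naive ``lift $\pi$ through $\pi'$'' does not finish the job — is that a bare lift $F_1$ generally fails to match the unit embeddings; this is already visible for $T\RR/\ZZ$, where the lifts of the identity to $T\RR$ differ by integer translations and only one of them sends zero vectors to zero vectors. The algebraic correction by the isotropy ``gauge'' $\kappa$ is precisely what pins $F$ down so that $F\circ\til\u=\til\u'$, and once that is arranged the remaining verifications are routine applications of the equality test. The only other thing to take care of is bookkeeping: all of the above is a finite collection of open conditions, so after shrinking we may take a single open $\CU\ni x_0$ in $M$ over which every germ above is honestly represented, which is the $\CU$ claimed in the statement.
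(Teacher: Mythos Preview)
Your proof is correct and follows essentially the same strategy as the paper: lift $\pi$ through the local subduction $\pi'$, correct the lift algebraically so that it matches unit embeddings, and then use the equality test to conclude multiplicativity. The only packaging difference is that the paper's correction is the source-side formula $F(g)=\til\delta'\big(\hat F(g),\hat F(\til\u(\til\s(g)))\big)$ and it invokes Lemma~\ref{lemma:unique.lift} (whose proof is itself Lemma~\ref{lemma:equality.test} applied to the two multiplications) rather than applying Lemma~\ref{lemma:equality.test} directly to $F^{-1}\circ\til\m'\circ(F\times F)$ versus $\til\m$ as you do; these are equivalent routes to the same conclusion.
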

\begin{proof}
Without loss of generality, we can assume that the two local groupoid charts have the same set of units \( \til M \). 

Lemma~\ref{lemma:unique.lift} tells us that if two (local) groupoid structures on \(\til \CG' \) are compatible with \( \pi' \) and have the same set units near \( \til u'(x_0) \), then they are equal in a neighborhood of \( \til u'(x_0) \). 
Therefore, it suffices to construct a diffeomorphism \( F_1 \colon \til \CG \to \til \CG' \), defined in a neighborhood of \( \til u(x_0) \) such that \( F \circ \til \u = \til \u' \) near \( \til u( x_0) \). 
Such a diffeomorphism will automatically be compatible with the multiplication operations in some neighborhood of \( \til u(x_0) \).

Since \( \pi \) and \( \pi' \) are quasi-étale, we can assume without loss of generality that we have a diffeomorphism \( \hat F \colon \til \CG \to \til \CG' \) such that \( \hat F (\til \u(x)) = \til \u'(x) \). Let \( \til \delta' \) be the division map for \( \til \CG' \). Now consider the function:
\[ F(g) := \til \divi'( \hat F(g), \hat F ( \til \u(\til \s(g)) ) )  \] 
where \( \til \s \) is the source map for \( \til \CG \). 
Clearly \( F \) will be well-defined in a neighborhood of \( \til \u(x) \). 
A direct calculation shows that \( F \circ \til \u = \til \u' \) and \( \pi' \circ F = \pi\).
\end{proof}
We will finish the proof of this section a lemma where we show that there exists exactly one isomorphism between any two given local groupoid charts.
\begin{lemma}\label{lemma:isomorphism.is.unique}
Suppose \( \pi \colon \til \CG \to \CG \) and \( \pi' \colon \til \CG ' \to \CG  \) are local groupoid charts around \( x_0 \in M \). If \( \CU \subset M \) is an open neighborhood of \( x_0 \) and  \( [F] \colon\til \CG|_{\CU} \to \til \CG' |_{\CU} \) and \( [G] \colon\til \CG|_{\CU} \to \til \CG' |_{\CU}\) satisfy 
\[ [\pi] = [\pi']  \circ [F] \qquad [\pi] = [\pi']  \circ [G] \]
Then \( [F] = [G] \).
\end{lemma}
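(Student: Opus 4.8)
The plan is to reduce the claim to a uniqueness statement about two endomorphisms of a single chart and then apply the equality test of Lemma~\ref{lemma:equality.test}. First I would restrict all the data to a small common open neighborhood of $x_0$, so that the two charts $\pi\colon\til\CG\to\CG$ and $\pi'\colon\til\CG'\to\CG$ have the same object base $\til M$; since $\pi$ and $\pi'$ both cover the inclusion $\til M\into M$, the relations $[\pi]=[\pi']\circ[F]$ and $[\pi]=[\pi']\circ[G]$ force $F$ and $G$ to cover $\Id_{\til M}$ at the level of objects. Then, using Lemma~\ref{lemma:isomorphism.exists}, I would fix an isomorphism of local groupoids $[E]\colon\til\CG\to\til\CG'$ with $[\pi]=[\pi']\circ[E]$ (after a further restriction, also covering $\Id_{\til M}$), and replace $[F]$ and $[G]$ by the germs of endomorphisms $[\Phi]:=[E]\inv\circ[F]$ and $[\Psi]:=[E]\inv\circ[G]$ of $\til\CG$. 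Using $[\pi']=[\pi]\circ[E]\inv$ one gets $\pi\circ\Phi=\pi'\circ F=\pi$ and likewise $\pi\circ\Psi=\pi$ near the units; since $[E]$ is invertible, establishing $[\Phi]=[\Psi]$ is equivalent to the desired conclusion $[F]=[G]$.

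Next I would note that every local groupoid chart is, in particular, a quasi-étale chart carrying a division structure near its units: the division map $\til\delta(g,h):=\til\m(g,\til\i(h))$ of $\til\CG$ is compatible with $\divi$ since $\pi$ is a local groupoid morphism, $\til\u$ lifts the units since $\pi$ covers $\til M\into M$, and the characterization $\til\delta(g,h)=\til\u(\til\t(g))\Leftrightarrow g=h$ holds near the units by the groupoid axioms (shrinking $\til\CG$ if necessary). Hence Lemma~\ref{lemma:equality.test} applies, and I would invoke it with $n=1$, $\alpha=\Phi$, $\beta=\Psi$. Its three hypotheses are met: $\pi\circ\Phi=\pi\circ\Psi$ was checked above; since $\Phi$ covers $\Id_{\til M}$ we have $\t\circ\Phi=\t$, hence $\til\t\circ\Phi=\t\circ\pi\circ\Phi=\t\circ\pi=\til\t=\til\t\circ\pr_1$ (as $\pr_1$ is the identity for $n=1$); and a local groupoid morphism covering $\Id_{\til M}$ sends each unit $\til\u(x)$ to the unique unit with source $x$, namely $\til\u(x)$, so $\Phi\circ\til\u=\til\u=\Psi\circ\til\u$. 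Lemma~\ref{lemma:equality.test} then gives an open neighborhood of the image of $\til\u$ on which $\Phi=\Psi$, i.e.\ $[\Phi]=[\Psi]$, and composing with $[E]$ on the left yields $[F]=[G]$.

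The step that needs the most care is the object-level bookkeeping: checking that a morphism of local groupoid charts necessarily covers the identity (this is what makes hypotheses (ii) and (iii) of Lemma~\ref{lemma:equality.test} hold) and managing the successive shrinkings so that all germs are defined and the compositions make sense. It is also worth recording explicitly that a local groupoid chart comes with an intrinsic division structure, since Lemma~\ref{lemma:equality.test} is phrased for a chart already equipped with one. Apart from these points, the argument is just a combination of Lemma~\ref{lemma:isomorphism.exists} and Lemma~\ref{lemma:equality.test}.
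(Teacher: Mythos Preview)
Your proposal is correct and takes essentially the same approach as the paper: reduce to comparing two endomorphisms of a single chart that cover the identity and agree with $\pi$ after projection, then apply Lemma~\ref{lemma:equality.test} with $n=1$. The only difference is cosmetic: the paper composes with $[G]^{-1}$ directly to get the single endomorphism $[F]\circ[G]^{-1}$ (implicitly treating $[G]$ as invertible), whereas you pull back along the auxiliary isomorphism $[E]$ from Lemma~\ref{lemma:isomorphism.exists} to obtain the pair $[E]^{-1}\circ[F]$ and $[E]^{-1}\circ[G]$; your route has the mild advantage of not presupposing invertibility of $[G]$.
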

\begin{proof}
We showed in the previous lemma that such an isomorphism exists. Therefore, we need to show that it is unique.

We can assume without loss of generality that \( \CU = M \). We must show that \([F] \circ [G]\inv \colon \til \CG|_{\CU} \to [\til \CG]  \) is equal to the identity germ. 

Note that \( [F]\circ [G]\inv \) satisfies  \( [\pi]_{x_0} = [\pi]_{x_0} \circ( [F]_{x_0} \circ [G]_{x_0}) \) so we can reduce to the case where \( \til \CG' = \til \CG\).

Therefore, we are in a situation where we have a local groupoid map \( [F] \colon \til \CG \to \til \CG \) which is an isomorphism in a neighborhood of the units and is such that \( \pi \circ F = \pi \). 
We must show that \( F \) equals the identity.

Note that we have that \( \pi \circ F = \pi \circ \Id_{\til \CG} \), and also \( \til t \circ F = \til t\). Furthermore, if \( x \in \til M \) we have that \( \til F(\til u(x)) = \til u(x) \). Therefore, \( F\) and \( \Id_{\til \CG}\) satisfy the hypotheses of Lemma~\ref{lemma:equality.test} and therefore \( F = \Id \) in a neighborhood of \( \til u(x_0)\).
\end{proof}

\subsection{Proof of Theorem~\ref{theorem:existence.and.uniqueness.of.LGC}}
\begin{proof}
Suppose \( \CG \grpd M \) is a singular Lie groupoid. 

First, we show show uniqueness. Suppose \( \pi \colon \til \CG \to \CG \) and \( \pi' \colon \til \CG' \to \CG \) are wide local groupoid charts. By Lemma~\ref{lemma:isomorphism.exists}, around each point \( p \in M \) there exists an open neighborhood \( \CU_p \subset M\) of \( p \) and an isomorphism \( [\CF_p] \colon \til \CG|_{\CU_p} \to \til \CG'|_{\CU_p}\) with the property that \( [\pi'] \circ [\CF_p] = [\pi] \).

By Lemma~\ref{lemma:isomorphism.is.unique}, for any pair \( p, q \in M \) such that \( \CU_p \cap \CU_q\) is non-empty, it must be the case that 
\[ [\CF_p]|_{\CG|_{\CU_p \cap \CU_q}} = [\CF_q]|_{\CG'_{\CU_p \cap \CU_q}}    \]
Recall that the category of local groupoids is equivalent to the category of Lie algebroids. Since morphisms of Lie algebroids form a sheaf (over the base manifold), it follows that morphisms of local groupoids also form a sheaf. Therefore, there must exist a unique local groupoid morphism \( [\CF] \colon \til \CG \to \til \CG' \) with the property that \( [\pi'] \circ [\CF] = [\pi] \).

Now we consider existence. By Lemma~\ref{prop:existence.of.local.local.groupoid.charts}, we know that for all \( p \in M \) there must exist an open neighborhood \( \til M_p \subset M\) of \( p \) together with a local groupoid \( \til \CG_p \grpd \til M_p\) and a local groupoid chart \( \pi \colon \til \CG_p \to \CG\).

Furthermore, we remark that given any two \( p, q \in M \) such that \( \til M_p \cap \til M_q \neq \emptyset \), by the uniqueness part of the proof there must exist a unique isomorphism of local groupoid charts:
\[ [\CF_{pq}] \colon \til \CG_p |_{\CU_p \cap \CU_q} \to \til \CG_q|_{\CU_p \cap \CU_q} \]
that is compatible with the projections to \( \CG \). Furthermore, on any triple intersection, the uniqueness of this isomorphism implies that a cocycle condition holds. In other words, for all \( p,q,r \in M \) such that \(  \CU_p \cap \CU_q \cap \CU_r \neq \emptyset \) we have:
\[  [\CF_{qr}] \circ [\CF_{pq}] = [\CF_{pr}] \]
Since local groupoids are a sheaf. We conclude that there must exist a local groupoid \( \til \CG \) equipped with isomorphisms:
\[ [\phi_p] \colon \til \CG|_{\CU_p} \to \til \CG_p \]
which are compatible with the transition maps in the standard way.

Furthermore, we remark that for all \( p \in \CU \) we have a local groupoid chart:
\[  [\pi_p] \circ [\til \phi_p] \colon \til \CG|_{\CU_p} \to \CG \]
Compatibility of the collection \( \{[\til \phi_p ] \}_{p \in M} \) with the transition maps imply that the local groupoid charts \(\{ [\pi_p] \circ [\til \phi_p] \}\) are compatible on double intersections. Again, since morphisms of local groupoids are a sheaf, there must exist a morphism of local groupoids \( [\pi] \colon \til \CG \to \CG\). Since a local groupoid is canonically isomorphic to any neighborhood of its units, we can assume that \( \pi \) is globally defined. Furthermore such a \( \pi \) will be locally quasi-étale since each of the \( [\pi_p] \) are quasi-étale. Since the property of being quasi-étale is local, we conclude that \( \pi \) is quasi-étale and so \( \pi \colon \til \CG \to \CG \) is a local groupoid chart.

\end{proof}
\subsection{Proving Theorem~\ref{theorem:existence.and.uniqueness.of.lifts}}
Before we prove Theorem~\ref{theorem:existence.and.uniqueness.of.lifts}. Let us state new version of the equality test before that has been upgraded to account for maps between groupoids.
\begin{lemma}\label{lemma:relative.equality.test}
   Suppose \( \CG \grpd M \) and \( \CH \grpd N\) are local singular Lie groupoids and \( \pi_\CG \colon \til \CG \to \CG \) and \( \pi_\CH \colon \til \CH \to \CH \) are local groupoid charts.

    Suppose \( \CU \subset \til \CG \) is an open neighborhood of the image of \( \til \u \). Given a natural number \( n \), suppose that we have a pair of smooth functions:
    \[ \alpha \colon \CU^{(n)} \to \til \CG \qquad \beta \colon \CU^{(n)} \to \til \CG\]
    together with a smooth function:
    \[ f \colon M \to N \]
    with the following properties:
    \begin{itemize}
        \item \( \pi_\CH \circ \alpha = \pi_\CH \circ \beta \)
        \item  \( \til t \circ \alpha = f \circ \til t \circ \pr_1\)
        \item For all \( x \in \til u\inv(\CU) \), we have that  
        \[ \alpha(\til u(x), \ldots , \til u(x)) = \beta(\til u(x), \ldots , \til u(x)) = \til u(x) \]
    \end{itemize}
    Then there exists an open neighborhood \( \CO \subset \CU \) of the image of \( \til \u\) such that \( \alpha|_{\CO^{(n)}} = \beta|_{\CO^{(n)}} \)
\end{lemma}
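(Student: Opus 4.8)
The plan is to mimic the deduction of Lemma~\ref{lemma:equality.test} from Lemma~\ref{lemma:failure.function}, with the bookkeeping adjusted for the extra map $f$ and for the fact that $\alpha$ and $\beta$ now take values in $\til\CH$ (so their common target is controlled by the target map of $\til\CH$). Since $\pi_\CH$ is a \emph{local groupoid} chart, $\til\CH$ carries its own unit embedding $\til\u_\CH$ and division map $\til\divi_\CH$, and $[\pi_\CH]$ intertwines these with the structure maps of $\CH$; after shrinking $\til\CH$ around its units (applying the first lemma of this section to the local Lie groupoid $\til\CH$, together with Lemma~\ref{lemma:division.lemma}) we may assume $\til\divi_\CH(g,h) = \til\u_\CH(\til\t_\CH(g)) \iff g = h$, so that $(\til\u_\CH,\til\divi_\CH)$ is a division structure on $\pi_\CH$; here $\til\t_\CH := \t\circ\pi_\CH$.

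Then I would introduce the ``failure function''
\[ F \colon \CU^{(n)} \to \til\CH, \qquad F(g_1,\dots,g_n) := \til\divi_\CH\bigl(\alpha(g_1,\dots,g_n),\,\beta(g_1,\dots,g_n)\bigr), \]
which is defined near the units because $\pi_\CH\circ\alpha = \pi_\CH\circ\beta$ forces $\alpha$ and $\beta$ to have the same source. That same hypothesis makes $\pi_\CH\circ F$ land in the unit arrows of $\CH$, and combined with $\til\t_\CH\circ\alpha = f\circ\til\t\circ\pr_1$ this yields $\pi_\CH\circ F = \u_\CH\circ f\circ\til\t\circ\pr_1$, where $\u_\CH$ is the unit embedding of $\CH$. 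Now comes the totally-disconnected-fibre argument of Lemma~\ref{lemma:failure.function}: along any path $\gamma$ in $\CU^{(n)}$ tangent to $\ker T(\til\t\circ\pr_1)$ the composite $\pi_\CH\circ F\circ\gamma$ is constant, so $F\circ\gamma$ stays in a single (totally disconnected) fibre of $\pi_\CH$ and is therefore constant; hence $\ker T(\til\t\circ\pr_1)\subseteq\ker TF$. Since $\til\t\circ\pr_1\colon\CU^{(n)}\to M$ is a submersion, the local normal form around the section $x\mapsto(\til\u(x),\dots,\til\u(x))$ furnishes a neighbourhood of the units on which the fibres of $\til\t\circ\pr_1$ are connected; there $F$ is constant along these fibres and so descends through $\til\t\circ\pr_1$, as does $\til\u_\CH\circ f\circ\til\t\circ\pr_1$, and the two descended maps agree on the section (both send $(\til\u(x),\dots,\til\u(x))$ to $\til\u_\CH(f(x))$, by property (c) of the division structure). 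Hence $F = \til\u_\CH\circ\til\t_\CH\circ\alpha$ on some $\CO^{(n)}$ with $\CO\subset\CU$ a neighbourhood of the units, and applying property (c) of the division structure on $\til\CH$ once more gives $\alpha|_{\CO^{(n)}} = \beta|_{\CO^{(n)}}$.

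The only genuinely new obstacle relative to Lemma~\ref{lemma:equality.test} is that the target of $F$ is now governed by $\til\t\circ\pr_1$ followed by $f$, not by a self-submersion of $\til\CG$, so Lemma~\ref{lemma:failure.function} cannot be quoted verbatim; its proof must be re-run, descending $F$ along the submersion $\til\t\circ\pr_1$ and comparing it with $\til\u_\CH\circ f\circ\til\t\circ\pr_1$ on the unit section. Note that taking $\CH=\CG$ and $f=\Id_M$ recovers Lemma~\ref{lemma:equality.test}, so no circularity is introduced. A smaller technical point to handle carefully is checking that a local groupoid chart $\pi_\CH$ really does carry a division structure in the sense required by the earlier lemmas; this is automatic for properties (a) and (b) and holds for (c) after the shrinking described above.
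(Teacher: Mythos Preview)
Your proposal is correct and follows precisely the approach the paper intends: the paper does not actually write out a proof, but remarks that one re-runs the arguments of Lemma~\ref{lemma:failure.function} and Lemma~\ref{lemma:equality.test} with $f$ inserted in the relevant equations, and that the local groupoid structure on $\til\CH$ furnishes the required division structure---exactly what you do. You also correctly identify and handle the evident typos in the statement (the codomain of $\alpha,\beta$ must be $\til\CH$, and the value on units should be $\til\u_\CH(f(x))$), without which neither $\pi_\CH\circ\alpha$ nor the failure function $F$ would typecheck.
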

The proof of this lemma is almost identical to the one for Lemma~\ref{lemma:equality.test}. In principal it involves proving a similarly reformulated version of Lemma~\ref{lemma:failure.function} as well. For the sake of avoiding repetition we will not write out the proof here except to remark that the only difference is addition of the function \( f \) in a few of the equations. One should also observe that the local groupoid structure on \( \til \CH \) will be a division structure so a version of Lemma~\ref{lemma:division.lemma} applies.

We can now proceed with proving Theorem~\ref{theorem:existence.and.uniqueness.of.lifts}
\begin{proof}
Suppose \( \CG \grpd M \) and \( \CH \grpd N \) are local singular Lie groupoids. Suppose \( [\CF] \colon \CG \to \CH \) is a local groupoid homomorphism covering \( f \colon M \to N \) and \( \pi_\CG \colon \til \CG \to \CG \) and \( \pi_\CH \colon \til \CH \to \CH \) are wide local groupoid charts.

We first show the uniqueness part. Suppose \( [ \til \CF ] \colon \til \CG \to \til \CH \) and \( [\til \CF' ] \colon \til \CG \to \til \CH \) are local groupoid homomorphisms with the properties:
\[ [\pi_\CH] \circ [\til \CF] = [\CF ] \circ [\pi_\CG] \qquad  [\pi_\CH] \circ [\til \CF'] = [\CF ] \circ [\pi_\CG] \]
We can assume that \( \til \CF \) and \( \til \CF' \) are globally defined in \( \til \CG\).

We invoke Lemma~\ref{lemma:relative.equality.test}, where we take \( \CU \) to be a common domain of definition for \( \til \CF \) and \( \til \CF'\), \( n = 1\), \( \alpha = \til \CF\) and \( \beta = \til \CF'\). It is straightforward to check that the hypotheses are satisfied and so we conclude there must exist an open neighborhood \( \CO \subset \CU \) of the units where \( \CF|_\CO = \CF'|_{\CO}\). Since local groupoid morphisms are germs in a neighborhood of the units we conclude \( [\CF] = [\CF']\).

Now we show existence. Note that since local groupoid morphisms can be defined locally in \( M \) (i.e. they are a sheaf), due to the uniqueness property we have just proved, it suffices to prove that there exists a \( \CF \) with the desired property that is defined in a neighborhood of an arbitrary point of \( M \).

Let \( x_0 \in M \) be a fixed, arbitrary point. Since \( \pi_\CH \colon \til \CH \to \CH\) is a local subduction, there must exist a smooth function \( \overline \CF \colon \CU \to \CH \) defined an open neighborhood \( \CU \subset \til \CG\) of \( \til u(x_0) \) with the property that 
\[\pi_\CH \circ \til \CF = \CF \circ \pi_{\CG} \] 
and 
\[\overline{\CF}(\til u(x_0)) = \til \u(f(x_0))  \]
Now let:
\[ \til \CF(g) := \overline{\CF}(g) \cdot \overline{\CF}(\til u \circ \til \s(g)) \inv\]
This may not be defined for all \( g \) but note that \( \til \CF(\til \u(x_0)) = \til \u(f(x_0))\). Therefore, there exists an open neighborhood of \( \til \u(x_0) \) where \( \til \CF \) is well-defined. Since we are only trying to show existence in a neighborhood of \( x_0 \) we can assume without loss of generality that \( \til \CF \) is defined on \( \CU \).

Now observe that \( \til \CF \) maps units to units. That is, for all \( x \in M \) such that \( \til \u(x) \in \CU \):
\[ \til \CF ( \til \u(x)) = \overline{\CF}(\til \u(x)) \cdot \overline{\CF}(\til \u(x))\inv = \til \u( \til \t \circ \overline{\CF} (\til \u(x))) \]
We claim that \( \til \CF \) defines a local groupoid homomorphism. To see this let \( \alpha\) and \( \beta \) be defined as follows. For \( (g,h) \in \CU^{(2)} \):
\[ \alpha (g,h) = \til \CF( gh) \qquad \beta(g,h) = \til \CF(g) \cdot \til \CF(h) \]
Again, we may need to shrink \( \CU \) to a smaller open neighborhood so that \( \alpha \) and \( \beta \) are well-defined but this is no issue.

Now we can invoke Lemma~\ref{lemma:relative.equality.test}. A straightforward check shows that \( \alpha \) and \( \beta \) satisfy the conditions of the equality test so we know there must exist an open neighborhood \( \CO \) of \( \til u(x_0) \) with the property that \( \alpha = \beta \) on this open set. Therefore, \( \CO \) is an open set where \( \til \CF \) is compatible with multiplication.

Finally, we need to show that \( \til \CF \) satisfies:
\[ [\pi_\CH] \circ [\til \CF] = [\CF ] \circ [\pi_\CG ] \]
However, if we compute for \( g \in \CO \):
\[ \pi_\CH \circ \til \CF(g) = \pi_\CH \left( \overline{\CF}(g) \cdot \overline{\CF}(\til \u \circ \s(g))\right) = \pi_\CH(\overline{\CF}(g))
\]
Where the last equality holds since \( \pi_{\CH} \) is a homomorphism. However, by definition of how we constructed \( \overline{\CF} \) we know that \( \pi_\CH \circ \overline{\CF} = \CF \circ \pi_\CG \). Therefore:
\[ [\pi_\CH] \circ [\til \CF] = [\CF ] \circ [\pi_\CG ] \]
This concludes showing local existence and so the proof is finished.
\end{proof}
\section{The classical Ševera-Weinstein groupoid}\label{section:Weinstein.groupoid}
We have now constructed the differentiation functor. Our final task is to prove that the classical Ševera-Weinstein groupoid is, indeed, a singular Lie groupoid.
\subsection{The fundamental groupoid construction}\label{section:weinstein.groupoid.construction}
    The Ševera-Weinstein groupoid is constructed via an analogy to the fundamental groupoid. Essentially, what one does is reproduce the construction of the fundamental groupoid but within the category of Lie algebroids.
    We will mostly follow the notation and approach to the construction that appears in the works of Crainic and Fernandes~\cite{crainic_integrability_2003}\cite{crainic_lectures_2011} as we will need to use several of their results about the geometry of the path space.
\begin{definition}
Suppose \( A \to M \) and \( B \to N \) are Lie algebroids and \( F_0 , F_1 \colon A \to B \) are Lie algebroid homomorphisms covering \( f_0, f_1 \colon M \to N \). We say that \( F_0 \) and \( F_1 \) are \emph{algebroid homotopic} if there exists a homomorphism of Lie algebroids \( H \colon A \times T [0,1] \to B \) covering \( h \colon M \times  [0,1]  \to N \) with the following properties:
\begin{enumerate}[(a)]
\item \( H|_{A \times 0_{\{ i \}}} = F_i \)
\item \( H|_{ 0_{\partial M} \times T[0,1] } = 0 \)
\end{enumerate}
\end{definition}
By concatenating homotopies, algebroid homotopies can be seen to form an equivalence relation on the set of all algebroid morphisms \( A \to B \). 
In particular, in the case of the tangent algebroid, the algebroid homotopy relation coincides with the standard notion of homotopy of smooth maps.
\begin{definition}
Suppose \( A \to M \) is a Lie algebroid. An \( A \)-path is an algebroid morphism \( a \colon T [0,1] \to A \). The set of all \( A \)-paths will be denoted \( \CP(A) \). 
The \emph{fundamental groupoid} or \emph{Ševera-Weinstein groupoid} of \( A \) is denoted \( \Pi_1(A) \) and is defined to be the set of all \( A \)-paths modulo algebroid homotopy.
\end{definition}
In the work of Crainic and Fernandes, they make use of the theory of Banach manifolds to study the structure of \( \CP(A) \). 
In order to do this, we must relax the smoothness assumptions on \( \CP(A) \). 
From now on, we assume that any \( a \in \CP(A) \) is a bundle map of class \( C^1 \) and has base path of class \( C^2 \). 
Under these assumptions, Crainic and Fernandes prove the following theorem:
\begin{theorem}\cite{crainic_integrability_2003}
The set of algebroid homotopy equivalence classes in \( \CP(A) \) partitions \( \CP(A) \) into a foliation with finite co-dimension \( \CF \).
\end{theorem}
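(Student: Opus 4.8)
The plan is to follow the Banach-manifold approach of Crainic and Fernandes. First I would exhibit $\CP(A)$ as a Banach submanifold of the Banach manifold $P^1(A)$ of all $C^1$-paths in the total space of $A$: the $A$-path constraint, that $\tfrac{d}{dt}(p\circ a)(t)=\rho(a(t))$ where $p\colon A\to M$ is the bundle projection and $\rho$ the anchor, is cut out by a smooth map from $P^1(A)$ into a Banach space of $C^0$-sections, and one checks this map is a submersion at every $A$-path (the horizontal component of a variation $\delta a$ supplies the surjectivity, via a linear ODE). The Banach implicit function theorem then gives both the smooth structure and an explicit description of $T_a\CP(A)$ as a space of constrained variations; note also that the constraint automatically upgrades the base path of an $A$-path to class $C^2$, matching the regularity conventions fixed above.

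Next I would write down the candidate foliation. Fix an auxiliary $TM$-connection $\nabla$ on $A$. For $a\in\CP(A)$ over the base path $\gamma$ and a $C^1$ time-dependent section $b_t$ of $\gamma^*A$, the standard gauge-variation formula (schematically $\delta a=\nabla_t b+(\text{curvature/bracket terms in }a,b)$ and $\delta\gamma=\rho(b)$) produces a tangent vector $X_b(a)\in T_a\CP(A)$. Define $F_a:=\{\,X_b(a):b_0=b_1=0\,\}$. By construction $F$ consists exactly of the velocities of curves in $\CP(A)$ obtained by differentiating $A$-homotopies that fix the two endpoints, so the $A$-homotopy classes are everywhere tangent to $F$; the content of the theorem is that $F$ is an honest finite-codimension foliation with these classes as leaves.

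The heart of the proof --- and the step I expect to be the main obstacle --- is to show that $F$ is a smooth subbundle of $T\CP(A)$ of constant codimension $\dim M+\operatorname{rank}(A)$. For fixed $a$ the map $b\mapsto X_b(a)$ is a first-order linear differential operator on the Banach space of endpoint-vanishing $C^1$-sections of $\gamma^*A$; I would show its image is closed and identify the quotient $T_a\CP(A)/F_a$ with the finite-dimensional space built from the solutions on $[0,1]$ of the associated homogeneous linear ODE together with the two endpoint evaluations of an unconstrained $b$, which has dimension $\dim M+\operatorname{rank}(A)$ independently of $a$. Smooth dependence of the ODE's coefficients on $a$ then makes $F$ a closed, complemented, constant-corank subbundle. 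The functional-analytic bookkeeping here --- closedness of the image, constancy of the corank, and complementedness --- is the genuinely technical part, and it is exactly what licenses the use of the Frobenius theorem in the Banach category.

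Finally I would check integrability and identify the leaves. Involutivity of $F$ is a Maurer--Cartan-type computation, $[X_b,X_{b'}]=X_{[b,b']}$ for the natural bracket on time-dependent sections, with the endpoint-vanishing condition preserved; together with closedness and complementedness this yields an integral foliation $\CF$ of codimension $\dim M+\operatorname{rank}(A)$ by the Banach Frobenius theorem. That the leaf through $a$ equals its $A$-homotopy class follows from two inclusions: integrating the flows of the gauge vector fields $X_{b_t}$ with $b$ vanishing at the endpoints keeps a point in a single leaf and, by a concatenation and reparametrization argument, realizes every endpoint-fixing $A$-homotopy; conversely, differentiating a given $A$-homotopy $H$ produces a $C^1$-path in $\CP(A)$ whose velocity lies in $F$, so $A$-homotopic paths lie in the same leaf. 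Hence the $A$-homotopy equivalence classes are precisely the leaves, and $\CP(A)$ is partitioned into a foliation $\CF$ of finite codimension, as claimed.
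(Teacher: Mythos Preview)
The paper does not supply its own proof of this statement; it is quoted from \cite{crainic_integrability_2003} and used as a black box. Your proposal is a faithful outline of the original Crainic--Fernandes argument: exhibit $\CP(A)$ as a Banach submanifold of the full path space, produce the candidate distribution via the gauge-variation vector fields $X_b$ with endpoint-vanishing $b$, check closedness, finite codimension and involutivity, and integrate. So there is no competing argument in the present paper to compare against.

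One small remark on the original source: Crainic and Fernandes do not literally appeal to an abstract Banach Frobenius theorem. Instead they observe that the $X_b$ assemble into a Lie-algebra action (of the gauge algebra of time-dependent sections vanishing at the endpoints) and integrate this action directly; the resulting orbits are the leaves, and the transversal is supplied concretely by the exponential embedding $A^\circ\hookrightarrow\CP(A)$. The substance is the same as what you wrote, but the explicit-flow presentation sidesteps the need to verify the hypotheses of Frobenius in the Banach setting and simultaneously delivers the identification of leaves with $A$-homotopy classes. Your codimension count $\dim M+\operatorname{rank}A$ agrees with theirs (and with the dimension of the transversal $A^\circ$ used later in the present paper).
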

In addition to this, Crainic and Fernandes also show how this construction can be used to obtain a local Lie groupoid.
\begin{theorem}\cite{crainic_integrability_2003}
For any Lie algebroid \( A \), there exists an open neighborhood \( A^\circ \) of the zero section together with an embedding \( \exp \colon A^\circ \to \CP(A) \) with the following properties: 
\begin{itemize}
\item \( \exp \colon A^\circ \to \CP(A) \) is transverse to the foliation \( \CF \).
\item There is a local Lie groupoid structure on \( A^\circ \) which makes the associated map \( \pi \colon A^\circ \to \Pi_1(A) \) into a local groupoid homomorphism.
\item The Lie algebroid of \( A^\circ \) as a local groupoid is canonically isomorphic to \( A \).
\end{itemize}
\end{theorem}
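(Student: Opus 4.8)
The plan is to reconstruct the Crainic--Fernandes construction: produce the embedding $\exp$ by integrating a \emph{spray} on the total space of $A$, verify it is a transversal to the homotopy foliation $\CF$ by a dimension count at the zero section, and then transport the (partially defined) groupoid operations of $\Pi_1(A)$ back along the resulting local diffeomorphism. First I would fix an auxiliary linear connection on the vector bundle $A \to M$ and use it, together with the anchor $\rho$, to build a spray: a vector field $\mathcal{H}$ on the total space of $A$ that is fiberwise homogeneous of degree one and whose flow lines project to $A$-paths, i.e. $p_*\mathcal{H}_a = \rho(a)$ for the bundle projection $p \colon A \to M$. Such sprays exist for any connection. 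Writing $\phi_s$ for the flow of $\mathcal{H}$, homogeneity gives the rescaling law $\phi_s(m_\lambda a) = m_\lambda\,\phi_{\lambda s}(a)$, so for $a$ in a small neighborhood $A^\circ$ of the zero section the time-$1$ flow is defined and the assignment
\[ \exp(a) := \bigl( s \mapsto \phi_s(a) \bigr) \colon T[0,1] \to A \]
is an $A$-path, the anchor condition being exactly the spray condition. One checks that $\exp$ is an embedding with $\exp(0_x)$ the constant path at $x$, and that the base endpoint of $\exp(a)$ has initial velocity $\rho(a)$.

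Next I would prove transversality. The foliation $\CF$ has codimension equal to $\dim \Pi_1(A) = \dim M + \mathrm{rank}(A) = \dim A^\circ$, so transversality to $\CF$ at a point of the zero section is equivalent to the composite $\pi := q \circ \exp$ being an immersion there, where $q \colon \CP(A) \to \CP(A)/\CF = \Pi_1(A)$ is the leaf projection. I would use the Crainic--Fernandes description of the tangent space to a leaf: at an $A$-path $a$ over a base path $\gamma$, the variations tangent to $\CF$ are exactly those induced by time-dependent sections $b_t$ of $A$ along $\gamma$ with $b_0 = b_1 = 0$. At the constant path $\exp(0_x)$ these reduce to variations $\xi(t) = \dot b(t)$ with $\int_0^1 \xi\,dt = b(1) - b(0) = 0$, whereas the image of $T_{0_x}\exp$ consists of the fiber variations $\xi \equiv v$ (for which $\int_0^1\xi\,dt = v$) together with base variations covering $T_xM$. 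Intersecting these, a fiber variation can lie in $T_{0_x}\CF$ only if $v = 0$, so $T_{0_x}\exp \cap T_{0_x}\CF = 0$ and, by the dimension count, the two are complementary. Openness of transversality then lets me shrink $A^\circ$ so that $\exp$ is transverse on all of it, whence $\pi$ is a diffeomorphism of $A^\circ$ onto a neighborhood of the identities in $\Pi_1(A)$.

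With $\pi$ serving as a chart, I would transport the operations of $\Pi_1(A)$ — source and target given by the two base endpoints, inverse by path reversal, multiplication by concatenation — back to $A^\circ$. For composable $a, b \in A^\circ$ the product $\pi(a)\cdot\pi(b) = q(a * b)$ lies, for $a, b$ near the zero section, in a leaf meeting $\exp(A^\circ)$, so there is a unique $m(a,b) \in A^\circ$ near $0$ with $\pi(m(a,b)) = \pi(a)\cdot\pi(b)$; this defines the (locally defined) multiplication, and similarly for the inverse and the unit. Smoothness of these maps follows because concatenation and reversal are smooth on the Banach manifold $\CP(A)$ and $\pi$ is a local diffeomorphism, so the transversal representatives depend smoothly on parameters by the inverse function theorem. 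The local groupoid axioms hold near the units because they already hold in $\Pi_1(A)$ and $\pi$ is an isomorphism onto its image, and by construction $\pi$ is then a local groupoid homomorphism $A^\circ \to \Pi_1(A)$.

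Finally I would identify $\Lie(A^\circ)$ with $A$. The unit section is the zero section, so $\ker(d\,s)_{0_x}$ is the vertical tangent space of $A \to M$ at $0_x$, canonically $A_x$, giving the bundle isomorphism; and since $t(\exp(\epsilon v))$ is the endpoint of a base path with initial velocity $\rho(v)$, the anchor $d\,t$ of $\Lie(A^\circ)$ satisfies $d\,t(v) = \rho(v)$, matching that of $A$. The hard part — and the step I expect to be the main obstacle — is matching the \emph{brackets}: one must show that the commutator of right-invariant extensions of sections of $\ker(d\,s)$, evaluated at the units, reproduces $[\cdot,\cdot]_A$. This is where the genuine analytic content of the $A$-path model resides; I would carry it out by computing the concatenation law to second order (the first-order anchor data having already been pinned down) and checking that the connection-dependent torsion terms cancel, so that the bracket is recovered independently of the chosen spray. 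This produces the canonical isomorphism $\Lie(A^\circ) \cong A$ and completes the proof.
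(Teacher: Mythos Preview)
The paper does not prove this theorem; it is stated with a citation to Crainic--Fernandes and used as a black box. So there is no ``paper's own proof'' to compare against. Your sketch is a faithful reconstruction of the Crainic--Fernandes argument (spray, transversality at the zero section, transport of the operations via the transversal, identification of the algebroid), and the overall architecture is sound.

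Two points deserve tightening. First, you write that ``$\pi$ is a diffeomorphism of $A^\circ$ onto a neighborhood of the identities in $\Pi_1(A)$'' and later appeal to $\pi$ being a ``local diffeomorphism'' to get smoothness of $m$. This phrasing is dangerous: $\Pi_1(A)$ is in general \emph{not} a manifold --- indeed that failure is the entire motivation for the present paper --- so ``diffeomorphism onto'' has no a priori meaning. What you actually need, and what you implicitly use when you define $m(a,b)$ as the unique transversal representative of the leaf through $\exp(a)*\exp(b)$, is that a complete transversal to a finite-codimension foliation on a Banach manifold admits a smooth leaf-wise projection from a tubular neighborhood. Phrase the smoothness argument that way, via foliated charts on $\CP(A)$, rather than via $\Pi_1(A)$.

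Second, concatenation of $A$-paths as written does not land in $\CP(A)$: the join of two $C^1$ paths is generally only $C^0$ at the midpoint. Crainic--Fernandes handle this by reparametrizing with a fixed cutoff function so that all paths are flat at the endpoints before concatenating; you should insert this step, and note that the reparametrization is itself an algebroid homotopy so the class in $\Pi_1(A)$ is unaffected.
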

An important consequence of the first bullet point above is that \( \exp \colon A^\circ \to \Pi_1(A)  \) is a local subduction.
This is essentially implicit in the literature, however it is a little difficult to find a clear statement. This is due to the fact that it relies of properties of foliations on Banach manifolds.

For example, in \cite{tseng_integrating_2006}, it is claimed that the restriction of the fundamental groupoid of the foliation \( \CF \) to an arbitrary complete transversal results in an étale Lie groupoid. Therefore, by Lemma~\ref{lemma:lie.groupoid.quotient.is.subduction} passing to the quotient space should be a local subduction. Unfortunately, Tseng and Zhu do not go into much detail beyond stating this fact.

For completeness, we will include a proof of this fact in the following lemma.
\begin{lemma}\label{lemma:exponential.is.subduction}
Let \( \pi \colon A^\circ \to \Pi_1(A) \) be the map defined above. Then \( \pi \) is a local subduction.
\end{lemma}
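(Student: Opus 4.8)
The plan is to factor \( \pi \) through the quotient map \( \CP(A) \to \Pi_1(A) \) and reduce the statement to a property of the foliation \( \CF \). Write \( q \colon \CP(A) \to \Pi_1(A) \) for the quotient map, so that \( \pi = q \circ \exp \) by construction. Since \( \exp \) is an embedding, it is a diffeomorphism onto its image \( N := \exp(A^\circ) \), which is a finite-dimensional embedded submanifold of the Banach manifold \( \CP(A) \) transverse to \( \CF \); hence it suffices to show that \( q|_N \colon N \to \Pi_1(A) \) is a local subduction. Recall also that the fibres of \( q \) are exactly the leaves of \( \CF \).

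Next I would verify the lifting condition of Definition~\ref{definition:local.subduction} directly. Fix a plot \( \phi \colon U_\phi \to \Pi_1(A) \), a point \( u_0 \in U_\phi \), and \( a_0 \in A^\circ \) with \( \pi(a_0) = \phi(u_0) \); put \( y_0 := \exp(a_0) \in N \). Since \( \Pi_1(A) \) carries the quotient diffeology induced by \( q \), the map \( q \) is a subduction, so there is an open \( V \ni u_0 \) and a plot \( \psi \colon V \to \CP(A) \) with \( q \circ \psi = \phi|_V \). The point \( x_0 := \psi(u_0) \) need not be \( y_0 \), but \( q(x_0) = q(y_0) \), so \( x_0 \) and \( y_0 \) lie on a common leaf \( L \) of \( \CF \). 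As leaves of a foliation are path-connected, I would pick a path in \( L \) from \( x_0 \) to \( y_0 \), cover it by finitely many product foliated charts \( W_0, \dots, W_k \) with \( x_0 \in W_0 \) and \( y_0 \in W_k \), and arrange that the transversal slice of \( W_k \) is an open neighbourhood \( N_0 \subseteq N \) of \( y_0 \). Composing the transversal parts of the transition maps of this chain produces a germ of diffeomorphism \( h \) from a transversal slice \( S \) of \( W_0 \) at \( x_0 \) onto \( N_0 \), with \( h(x_0) = y_0 \) and with the property that \( h(z) \) lies on the same leaf as \( z \), so that \( q \circ h = q \) wherever defined.

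Finally I would assemble the lift. Writing \( \psi = (\psi_D, \psi_S) \) in the product chart \( W_0 \), I shrink \( V \) to an open \( V' \ni u_0 \) on which \( \psi(V') \subseteq W_0 \) and \( \psi_S(V') \) lies in the domain of \( h \), and set
\[
\tilde\phi := \exp^{-1} \circ h \circ \psi_S \colon V' \to A^\circ .
\]
Then \( \tilde\phi \) is a plot (a composite of the plot \( \psi_S \) with smooth maps of finite-dimensional manifolds), \( \tilde\phi(u_0) = \exp^{-1}(h(x_0)) = \exp^{-1}(y_0) = a_0 \), and, since both the plaque projection \( \psi \mapsto \psi_S \) and \( h \) preserve leaves, \( \pi \circ \tilde\phi = q \circ h \circ \psi_S = q \circ \psi_S = q \circ \psi = \phi|_{V'} \). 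This is exactly the lift demanded by Definition~\ref{definition:local.subduction}.

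The hard part is the middle paragraph: making the ``holonomy transport'' \( h \) precise for the foliation \( \CF \) of the \emph{Banach} manifold \( \CP(A) \). This requires that \( \CF \), having finite codimension, admits product foliated charts with finite-dimensional transversal slices; that a prescribed transversal submanifold through the centre of a chart may be taken as that chart's transversal slice; that leaves are path-connected; and that holonomy along a path inside a leaf defines a germ of diffeomorphism between transversals carrying the initial point to the terminal point. Each of these is standard for finite-dimensional foliations and goes through unchanged here because only the transverse direction is finite-dimensional --- this is the ``properties of foliations on Banach manifolds'' content alluded to in the text, and everything else above is just diffeological bookkeeping. One could instead route the middle step through the monodromy groupoid of \( \CF \) restricted to a complete transversal together with Lemma~\ref{lemma:lie.groupoid.quotient.is.subduction}; but since \( N \) itself is not a complete transversal one would still have to perform the leafwise transport by hand, so the direct argument seems cleanest.
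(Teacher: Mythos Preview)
Your proposal is correct and follows essentially the same route as the paper's own proof: lift the plot to \( \CP(A) \) via the quotient subduction, then use holonomy of the finite-codimension foliation \( \CF \) along a leafwise path to transport the lift into the transversal \( \exp(A^\circ) \) through the prescribed point. The paper likewise invokes foliated charts on the Banach manifold (citing Lang for their existence) and the classical holonomy construction; your write-up is in fact somewhat more explicit about how the transversal projection and the chain of foliated charts assemble into the germ \( h \).
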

\begin{proof}
    First, we remark that since \( \CF \) is a foliation on a Banach manifold, around any point \( a(t) \in \CP(A)\) there exists a foliated chart \(\Phi \colon  \RR^k \times V \to \CP(A) \). 
    By foliated chart, we mean that \( \Phi \) is an open diffeomorphism onto its image, \( k \) is the codimension of \( \CF\), \( V \) is an open subset of a Banach vector space, and the foliation \( \CF \) corresponds to \( \RR^k \times TV \).

    The existence of such foliated charts for foliations on Banach manifolds seems a little difficult to find in the literature stated plainly in this way. 
    However, it can be inferred from the proof of Theorem~1.1 in Chapter 7 of Lang~\cite{lang_fundamentals_1999}.

    Now, to see why \( \exp \) is a subduction suppose \( \phi \colon U_\phi \to \Pi_1(A) \) is a plot and let \( v \in A^\circ \) and \( u \in U_\phi \) be such that \( exp(v) = \phi(u) \).

    Let \( p \colon \CP(A) \to \Pi_1(A) \) be the canonical projection so that \( \pi = p \circ \exp \). 
    Since \( \Pi_1(A) \) is equipped with the quotient diffeology, we know there exists a lift \( \til \phi \colon V \to \CP(A) \) defined on some open neighborhood \( V\) of \( u\).
    
    Now suppose we have a submanifold \( \CT \subset \CP(A) \) which is transverse to \( \CF \), and is such that the image of \( \phi \) is contained in \( \CT\).
    By using a foliated charts centered around \( \phi(u) \), and \( a \in \CP(A)\) there exists a locally defined function \( q_1 \colon \CO_1 \to \CT \) and \( q_2 \circ \CO_2 \to A^\circ \) defined around an open neighborhood \( \CO_1 \) of \( \phi(u) \) and \( \CO_2 \) of \( \exp(v) \) such that \( p \circ q_1 = p\) and \( p \circ q_2 = p\).
        
    Now, we also know that \( \til \phi(u) \) is connected to \( a \) by a path \( h(t)\) in \( \CP(A) \) which is tangent to \( \CF \).
    Since \( \CT \) is a transverse submanifold at \( \phi(u)\) and \( \exp(A^\circ)\) is a transverse submanifold at \( \exp(v) \) we can use the classical construction\footnote{Note that the traditional form of this construction is by linking a series of foliated charts.} for the holonomy of a foliation, to obtain a germ of a diffeomorphism
    \[ f \colon \CT \to \exp(A^\circ) \]
    such that \( f(\phi(u)) = \exp(v) \) and \( p \circ f = p\).

    Therefore, by choosing a suitably small open neighborhood \( V \) of \( u \) the function 
    \[q_2 \circ f \circ q \circ \til \phi  \] 
    is a lift of \( \phi \) with the property that \( \phi(u) = a\).
    This shows that \( \pi \) is a local subduction.
\end{proof}
In the article of Crainic and Fernandes, the main goal was to obtain a criteria for integrability.
Towards this end, they became interested in understanding the fibers of this exponential map and, in particular, they computed the inverse images of the unit elements.
\begin{theorem}[Crainic and Fernandes]
    Let \( A \) be a Lie algebroid and let \( \exp \colon A^\circ \to \Pi_1(A) \) be as in the previous theorem.
    
    Given \( x \in M \) the fiber \( \exp\inv( \u(x)) \) is countable.
\end{theorem}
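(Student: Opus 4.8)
The plan is to reduce the statement to the classical fact that a finite‑codimension transversal meets each leaf of a foliation in a countable set. Since $\exp$ is an embedding, $\exp\inv(\u(x))$ is in bijection with $\exp(A^\circ)\cap L_x$, where $L_x\subset\CP(A)$ denotes the leaf of $\CF$ through the constant $A$‑path $c_x$ at $x$. Indeed, $\u(x)\in\Pi_1(A)$ is by definition the algebroid‑homotopy class $[c_x]$, and if $p\colon\CP(A)\to\Pi_1(A)$ is the quotient map (so $\pi=p\circ\exp$), then $\pi(v)=\u(x)$ holds exactly when $\exp(v)$ lies in the same leaf as $c_x$, i.e.\ $\exp(v)\in L_x$. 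So it suffices to prove $\exp(A^\circ)\cap L_x$ is countable.

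Recall from the proof of Lemma~\ref{lemma:exponential.is.subduction}, and the cited results of Lang and of Crainic–Fernandes, that $\CF$ is a foliation of finite codimension $k$ on the separable Banach manifold $\CP(A)$; hence $\CP(A)$ admits a countable atlas of foliated charts $\Phi_i\colon\RR^{k}\times V_i\to\CP(A)$, $i\in\NN$, in which $\CF$ restricts to the foliation with plaques $\Phi_i(\{c\}\times V_i)$, each $V_i$ being a second‑countable open subset of a separable Banach space. A standard plaque‑chain argument then shows that every leaf of $\CF$ is a countable union of plaques, and in particular is a connected second‑countable manifold in its leaf topology: starting from any one plaque of the leaf, every other is reached by a finite chain of plaques with consecutive members overlapping, and in passing from a plaque $P$ to a chart $\Phi_j$ the set $P\cap\operatorname{im}\Phi_j$, being open in the second‑countable manifold $P$, meets only countably many plaques of $\Phi_j$; since the atlas is countable, there are only countably many such chains.

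Now $\exp$ is transverse to $\CF$, and moreover $\dim A^\circ=k$, since $A^\circ$ carries a local Lie groupoid structure with Lie algebroid $A$ and the transverse geometry of $\CF$ models the finite‑dimensional space $\Pi_1(A)$ of dimension $\dim M+\operatorname{rank}A=\dim A$ (this is part of the Crainic–Fernandes structure theorem quoted above, consistent with $\pi$ being a local subduction by Lemma~\ref{lemma:exponential.is.subduction}). Shrinking each chart $\Phi_i$ if necessary, transversality together with the matching of dimensions makes $\pr_{\RR^{k}}\circ\Phi_i\inv$ injective on $\exp(A^\circ)\cap\operatorname{im}\Phi_i$; equivalently, $\exp(A^\circ)$ meets each plaque of $\Phi_i$ in at most one point. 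Therefore $\exp(A^\circ)\cap L_x\cap\operatorname{im}\Phi_i$ injects into the set of plaques of $\Phi_i$ contained in $L_x$, which is countable by the previous paragraph; taking the union over the countably many charts $\Phi_i$ shows $\exp(A^\circ)\cap L_x$, and hence $\exp\inv(\u(x))$, is countable.

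The only delicate ingredient here is the foliation theory on the infinite‑dimensional manifold $\CP(A)$ — the existence of a countable foliated atlas and the resulting second countability of the leaves — but this rests on exactly the structural input (separability of $\CP(A)$ and Lang's normal‑form theorem for foliations of Banach manifolds) that was already invoked in Lemma~\ref{lemma:exponential.is.subduction}, so it introduces no genuinely new difficulty.
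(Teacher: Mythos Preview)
Your argument is essentially correct, but it takes a different route from the one the paper records. The paper does not prove this theorem directly; it attributes it to Crainic--Fernandes and summarizes their argument: the fiber $\pi^{-1}(\u(x))$ is precisely the monodromy group $N_x(A)\subset A_x$, which arises as the image of a boundary homomorphism $\partial\colon\pi_2(L_x,x)\to G(\mathfrak{g}_x)$ from the second homotopy group of the orbit through $x$; countability then follows because $\pi_2$ of a (second-countable) manifold is countable. Your approach instead uses only the general geometry of the situation --- a complementary-dimensional transversal to a finite-codimension foliation on a separable Banach manifold meets each leaf in a countable set --- and never invokes the group structure.

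Two remarks on the execution. First, the step ``shrinking each chart $\Phi_i$ if necessary, $\pr_{\RR^k}\circ\Phi_i^{-1}$ is injective on $\exp(A^\circ)\cap\operatorname{im}\Phi_i$'' is not quite right as stated: shrinking the foliated chart does not prevent the transversal from re-entering it. The clean fix is to observe that transversality plus matching dimensions makes $\exp(A^\circ)\cap L_x$ a $0$-dimensional submanifold, hence discrete in $\exp(A^\circ)\cong A^\circ$; since $A^\circ$ is second countable, any discrete subset is countable. This bypasses the plaque-counting entirely. Second, your claim that $\CP(A)$ admits a \emph{countable} foliated atlas requires that $\CP(A)$ be second countable; this is true (it is a $C^1$/$C^2$ path space into a second-countable finite-dimensional manifold) but is worth stating explicitly, since the paper never does.

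The trade-off: your argument is more elementary and more portable (it would apply to any comparable transversal-to-foliation setup), while the Crainic--Fernandes argument yields much more --- it identifies the fiber as a specific subgroup of $A_x$ and ties integrability directly to the discreteness of these monodromy groups, which is the real content of their paper.
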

Indeed the actual theorem proved in the article of Crainic and Fernandes is much stronger:
The fiber \( \exp\inv(\u(x)) \) is actually an additive subgroup of \(A \) called the \emph{monodromy group} at \( A \) and it arises from a group homomorphism from the second fundamental group of the orbit.
The countability of the monodromy group is just an immediate consequence of the countability of the fundamental groups of a smooth manifold.

\subsection{The Ševera-Weinstein groupoid is a singular Lie groupoid}
In this subsection we will explain the proof of Theorem~\ref{theorem:main.wein.groupoid}. Let us restate this theorem now.

\begin{reptheorem}{theorem:main.wein.groupoid}
    Given a Lie algebroid, \( A \to M \), let \( \Pi_1(A) \grpd M  \) be the Ševera-Weinstein groupid of \( A \) and consider it as a diffeological groupoid. Then \( \Pi_1(A) \) is an element of \( \SingLieGrpd \) and \( \hat\Lie(\Pi_1(A)) \) is canonically isomorphic to \( A \).
\end{reptheorem}

Our first lemma is used to argue that we only have to check if \( \Pi_1(A) \) is a singular Lie groupoid in a neighborhood of the units.
\begin{lemma}\label{lemma:locally.Weinstein.is.weinstein}
Suppose \( \CG \grpd M \) is a diffeological groupoid over a smooth manifold \( M \). Assume that the source map \( s \colon \CG \to M \) is a local subduction. Now suppose there exists an open neighborhood of the units \( \CU \subset \CG  \) such that \( \CU \) is a singular local Lie groupoid. Then \( \CG \) is a singular Lie groupoid. 
\end{lemma}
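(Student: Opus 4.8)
The plan is to verify directly the two conditions defining a singular Lie groupoid: that the arrow space $\CG$ is a quasi-\'etale diffeological space, and that the source and target are $\QUED$-submersions. Three preliminary facts will be used throughout. First, since $\CG$ is a diffeological groupoid the inverse map $i \colon \CG \to \CG$ is a diffeomorphism, so $t = s \circ i$ is also a local subduction; and because $s$ is a local subduction and $M$ is a manifold, one obtains smooth local sections of $s$ through any prescribed point of $\CG$ (lift the inclusion plot of a chart). Second, being quasi-\'etale is D-locally checkable: an open subset of a quasi-\'etale space is quasi-\'etale (restrict any chart to the preimage of the open set), and a space admitting an open cover by quasi-\'etale subsets is quasi-\'etale. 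Third, as $\CU$ is a singular local Lie groupoid, $\CU \in \QUED$ and $s|_\CU, t|_\CU \colon \CU \to M$ are $\QUED$-submersions, so Theorem~\ref{theorem:qeds.submersion.criteria} (applicable since $M$ is a manifold) shows the fibers $\CU \cap s\inv(x)$ and $\CU \cap t\inv(x)$ are quasi-\'etale for every $x \in M$.

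First I would show $\CG \in \QUED$ by producing a quasi-\'etale chart around an arbitrary $g_0 \in \CG$. Put $x_0 = s(g_0)$, $y_0 = t(g_0)$, and choose a smooth local section $\sigma \colon V' \to \CG$ of $s$ with $V' \ni x_0$ open and $\sigma(x_0) = g_0$. The key device is the ``shear'' map $\Phi(k) = \bigl(k \cdot \sigma(s(k))\inv,\ s(k)\bigr)$, defined on an open neighborhood $\CO$ of $g_0$ small enough that $s(k) \in V'$ and $k \cdot \sigma(s(k))\inv \in \CU$ on $\CO$ (possible because $\Phi(g_0) = (1_{y_0}, x_0)$). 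Using only the groupoid identities one checks that $\Phi$ maps $\CO$ into the fiber product $\CU \times_M V'$ formed along $s|_\CU \colon \CU \to M$ and $t \circ \sigma \colon V' \to M$, and that $(h,x) \mapsto h \cdot \sigma(x)$ is a smooth two-sided inverse to $\Phi$ onto an open subset $\Lambda\inv(\CO) \subseteq \CU \times_M V'$. Hence $\CO$ is diffeomorphic to an open subset of $\CU \times_M V'$. Since $s|_\CU$ is a $\QUED$-submersion it is cartesian, and $V'$ is a manifold, so $\CU \times_M V'$ is quasi-\'etale; therefore $\CO$, and with it $g_0$, admits a quasi-\'etale chart.

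Next, to see that $s$ is a $\QUED$-submersion, by Theorem~\ref{theorem:qeds.submersion.criteria} (now also using that $\CG \in \QUED$) it suffices to show each fiber $s\inv(x)$ is quasi-\'etale. Around $1_x$ this is the quasi-\'etale open set $\CU \cap s\inv(x)$; and for arbitrary $g \in s\inv(x)$, writing $y = t(g)$, right translation $R_g \colon s\inv(y) \to s\inv(x)$, $h \mapsto hg$, is a diffeomorphism (inverse $k \mapsto k g\inv$) carrying $1_y$ to $g$, so $R_g(\CU \cap s\inv(y))$ is a quasi-\'etale open neighborhood of $g$ in $s\inv(x)$; by D-locality $s\inv(x) \in \QUED$. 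The same argument with left translations $L_g \colon t\inv(s(g)) \to t\inv(x)$, $h \mapsto gh$, shows every $t\inv(x)$ is quasi-\'etale, so $t$ is a $\QUED$-submersion as well. Then $\CG \fiber{s}{t} \CG$ is quasi-\'etale, all structure maps of $\CG$ are (smooth) morphisms in $\QUED$ satisfying the groupoid axioms, and the object space $M$ is a manifold, so $\CG \grpd M$ is a singular Lie groupoid.

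The delicate point is the construction of charts around an arbitrary $g_0$. The naive idea --- translate the good neighborhood $\CU$ by a local bisection of $\CG$ through $g_0$ --- is unavailable, since we have no bisections of $\CG$ before knowing $\CG$ is smooth near $g_0$. The shear map circumvents this by expressing a neighborhood of $g_0$ as a fiber product over the \emph{manifold} $M$ along the already-understood $\QUED$-submersion $s|_\CU$, and it is precisely the cartesian property built into the definition of a $\QUED$-submersion that then forces that fiber product to be quasi-\'etale.
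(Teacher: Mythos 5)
Your argument is correct, and its skeleton coincides with the paper's: charts around arbitrary arrows are obtained by translating the distinguished neighborhood \( \CU \) of the units, and the \( \QUED \)-submersion property of \( s \) (and \( t \)) is reduced, via Theorem~\ref{theorem:qeds.submersion.criteria}, to quasi-\'etaleness of the fibers, which you establish exactly as the paper does, by translating \( \CU \cap s\inv(t(g)) \) onto a neighborhood of \( g \) inside \( s\inv(x) \). The genuine difference is the chart construction around an arbitrary \( g_0 \). The paper takes a quasi-\'etale chart \( \pi \) around \( \u(x_0) \) and a local section \( \sigma \) of \( s \) through \( g_0 \) and declares \( \pi'(n) = \sigma(t(\pi(n)))\cdot \pi(n) \) to be quasi-\'etale because it is ``\( \pi \) composed with a diffeomorphism''; as stated this requires \( \sigma \) to behave like a bisection, since left translation along an arbitrary section of \( s \) need not even be injective (in the pair groupoid \( M \times M \) it is \( (x,y) \mapsto (f(x),y) \) where \( f = t \circ \sigma \) can be any smooth map), and bisections are not obviously available from the local-subduction hypothesis alone. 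Your shear map \( \Phi(k) = (k\cdot\sigma(s(k))\inv,\, s(k)) \) avoids this by carrying the base point along as a separate coordinate, giving an honest diffeomorphism from a neighborhood of \( g_0 \) onto an open subset of \( \CU \times_M V' \), and quasi-\'etaleness of that fiber product is then exactly what the cartesianness built into the hypothesis that \( s|_{\CU} \) is a \( \QUED \)-submersion provides. So the paper's step is shorter but leans on an unstated bisection-type choice, while yours spends the cartesian property (not used in the paper's version of this step) to get a justification that needs only a section of \( s \); you also make explicit two points the paper leaves implicit, namely that \( t = s \circ \i \) is a local subduction and that the \( t \)-fibers must be handled as well. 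The one convention you use silently is the identification of the \( \QUED \) fiber product guaranteed by cartesianness with the diffeological fiber product, but this is precisely how the paper itself reads ``cartesian'' in the proof of Theorem~\ref{theorem:qeds.submersion.criteria}, so it is not a gap relative to the paper.
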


\begin{proof}
    The proof of this fact is a translation argument.
    
    We need to show that \( \CG \) is a \( \QUED\)-groupoid. This means that we must show that \( \CG \) is a quasi-étale diffeological space and the source map \( s \colon \CG \to M \) is a \( \QUED\)-submersion.

    For the first part, let \( g_0 \in \CG \) be arbitrary and let \( s(g) = x_0 \in M \). Since \( \CU \) is quasi-étale, there must exist a quasi-étale chart \( \pi \colon N \to \CG \) where \( \u(x) \) is in the image of \( \pi\). Let \( n_0 \in N \) be the point such that \( \pi(n_0) = x_0 \). 
    
    Now let \( \sigma \colon \CO \to \CG \) be a local section of the source map such that \( \sigma(x) = g \). Then we can define:
    \[ \pi'(n) := \sigma( t \circ \pi(n)) \cdot \pi(n) \]
    Clearly \( \pi' \) will be well-defined in an open neighborhood of \( n_0\). Furthermore, \( \pi'(n_0) = g_0 \). Since \( \pi' \) is locally \( \pi \) composed with a diffeomorphism, it follows that \( \pi' \) is quasi-étale. Therefore, \( \pi' \) is a quasi-étale chart around \( g_0 \).

    To show that \( s \colon \CG \to M \) is a \( \QUED\)-submersion, we will utilize Theorem~\ref{theorem:qeds.submersion.criteria}. We only need to show that the fibers of \( s\) are quasi-étale diffeological spaces. However, we already know that \( \CU \) is a singular local Lie groupoid. Therefore, for each \( x \in M \) we know \( s\inv(x) \cap \CU \) is a quasi-étale diffeological space. To show that \( s\inv(x) \) is a quasi-étale diffeological space we once again invoke a simple translation argument. Any point in \( s\inv(x) \) is just a left translation of the identity. Therefore, a quasi-étale chart around the identity element of \( s\inv(x) \) can be translated to be a quasi-étale chart around any element of \( s\inv(x)\).
\end{proof}
In order to show that \( \Pi_1(A) \) is a singular Lie groupoid we will have to show that the source map is a \( \QUED\)-submersion. Part of that is the claim that the source map is a local subduction. In a way, this is also a bit implicit in the literature but we include a proof here.
\begin{lemma}\label{lemma:source.is.subduction}
    Suppose \( p \colon A \to M \) is a Lie algebroid. Let \( s \colon \Pi_1(A) \to M \) be the source map for the fundamental groupoid. Then \( s \) is a local subduction.
\end{lemma}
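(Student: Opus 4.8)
The plan is to factor the source map through the exponential chart and use the fact, already established in Lemma~\ref{lemma:exponential.is.subduction}, that $\exp\colon A^\circ \to \Pi_1(A)$ is a local subduction, together with the elementary observation that a composite of local subductions is a local subduction. Recall the source map on $\Pi_1(A)$ sends the homotopy class of an $A$-path $a\colon T[0,1]\to A$ to the initial point of its base path; equivalently, $s\circ\exp\colon A^\circ\to M$ is just the bundle projection $p|_{A^\circ}$, which is a surjective submersion and hence a local subduction by Example~\ref{example:local.subduction.manifolds}. So on the image of $\exp$ — which is an open neighborhood of the unit section — the source map is a local subduction: given a plot $\phi\colon U_\phi\to M$, a point $u\in U_\phi$, and a class $g\in\Pi_1(A)$ with $s(g)=\phi(u)$, first pull $g$ back to some $v\in A^\circ$ with $\exp(v)=g$ if $g$ lies in the image (more on the general case below), then lift $\phi$ through $p|_{A^\circ}$ to $\psi\colon V\to A^\circ$ with $\psi(u)=v$, and $\exp\circ\psi$ is the desired lift of $\phi$ through $s$.

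The remaining issue is that $\exp(A^\circ)$ is only a neighborhood of the units, whereas a local subduction must admit lifts through \emph{every} point of the source fiber, including classes far from the identity. This is handled by a translation argument entirely analogous to the proof of Lemma~\ref{lemma:lie.groupoid.quotient.is.subduction}: given $g\in\Pi_1(A)$ with $s(g)=\phi(u)$ (so $g$ is an arrow from $\phi(u)$ to some point $y$), we want a lift of $\phi$ hitting $g$. Pick any plot-lift $\tilde\phi_0\colon V\to \Pi_1(A)$ of $\phi$ through $s$ whose value at $u$ is \emph{some} arrow $h$ out of $\phi(u)$ — this exists by the neighborhood-of-units case applied after translating, or more directly: choose a lift into the image of $\exp$ of the unit-valued plot $x\mapsto 1_{\phi(x)}$, giving $\tilde\phi_1\colon V\to\Pi_1(A)$ with $\tilde\phi_1(u)=1_{\phi(u)}$; then $g\cdot\tilde\phi_1(u)^{-1}\cdot\tilde\phi_1(\cdot)$... actually cleaner: set $\tilde\phi(w):=g\cdot\tilde\phi_1(w)$ where $\tilde\phi_1$ lifts $x\mapsto 1_{\phi(x)}$ through $s$ into a neighborhood of the units. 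Since $t$ of this product is fixed at $y$ and $s(g\cdot 1_{\phi(w)})=s(1_{\phi(w)})=\phi(w)$, and since multiplication is smooth, $\tilde\phi$ is a plot on $\Pi_1(A)$ with $s\circ\tilde\phi=\phi|_V$ and $\tilde\phi(u)=g\cdot 1_{\phi(u)}=g$, after possibly shrinking $V$ so the product is defined. Hence $s$ is a local subduction.

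The main obstacle is purely bookkeeping: making sure the "neighborhood of units" version is genuinely available as the input to the translation step, i.e. that every source fiber really is reached by \emph{left} translation of an arrow near a unit, and that the multiplication used is defined on the relevant domain — but since $\Pi_1(A)$ is a genuine (global) diffeological groupoid, multiplication is defined on all composable pairs and these domain worries evaporate; the only care needed is that the auxiliary lift $\tilde\phi_1$ land in the part of $\Pi_1(A)$ covered by $\exp$, which holds automatically since $\exp(A^\circ)$ contains the unit section and $\tilde\phi_1(u)$ is a unit, so shrinking $V$ suffices by continuity in the D-topology.
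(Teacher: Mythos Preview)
Your translation step has a genuine gap: the product \( g \cdot \tilde\phi_1(w) \) is not defined for generic \( w \). For \( g \cdot h \) to make sense in a groupoid you need \( s(g) = t(h) \), but if \( \tilde\phi_1(w) = 1_{\phi(w)} \) then \( t(\tilde\phi_1(w)) = \phi(w) \), which equals \( s(g) = \phi(u) \) only at \( w = u \) (unless \( \phi \) is constant). So the formula you wrote down, and the computation ``\( s(g\cdot 1_{\phi(w)}) = s(1_{\phi(w)}) = \phi(w) \)'', is simply undefined off the single point \( u \). The analogy with Lemma~\ref{lemma:lie.groupoid.quotient.is.subduction} is misleading: there one translates along a local \emph{bisection} of a Lie groupoid, whose existence uses that the source map is a submersion; here the existence of such sections is essentially what you are trying to prove.

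There is a repair: instead of using the unit section, you need \( \tilde\phi_1 \) to take values in the \emph{target} fibre \( t^{-1}(s(g)) \) while having \( s \circ \tilde\phi_1 = \phi \) and \( \tilde\phi_1(u) = 1_{s(g)} \). This can be arranged using the local Lie groupoid structure on \( A^\circ \): since both \( \tilde s \) and \( \tilde t \) are submersions and their fibres are transverse near the zero section, \( \tilde s \) restricted to \( \tilde t^{-1}(s(g)) \) is a submersion near \( 0_{s(g)} \), giving the required section. With this correction your argument goes through and is genuinely different from the paper's proof, which instead works upstairs on the Banach manifold \( \CP(A) \), computes \( T_a\hat s \) explicitly using parallel transport for an \( A \)-connection, and deduces that \( \hat s \colon \CP(A) \to M \) is a Banach submersion at every point (hence admits local sections through every \( A \)-path, not just those near units). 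The paper's route avoids the translation issue entirely at the cost of invoking Banach-manifold machinery; yours is more elementary once the composability issue is fixed.
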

\begin{proof}
       First, we argue that the map:
       \[ \hat s \colon \CP(A) \to M \qquad a \mapsto p(a(0)) \]
       is a submersion of Banach manifolds.

       To see this, we rely on the original paper of Crainic and Fernandes~\cite{crainic_integrability_2003} where they compute \( T_a \CP(A) \) to be:
       \[ \{ (u,X) \in C^\infty([0,1], A \times TM) \ : \ \overline{\nabla}_a X  = \rho(u)  \} \]
       where \( \nabla\) is a connection on \( A \), \( \rho\) is the anchor map and:
       \[ \overline{\nabla}_a X := \rho( \nabla_X a) + [\rho(a),X ] \]
       The right hand side of this expression is computed by using any set of sections that extend \( X \) and \( a\) to time dependent sections of \( TM \) and \( A \), respectively.

       Actually, \( \overline{\nabla} \) is an example of what is called an \( A \)-connection. 
       More specifically, \( \overline{\nabla} \) is an \( A \) connection on the vector bundle \( TM \). 
       The expression \( \overline{\nabla}_a \phi \) should be interpreted as the directional derivative of \( \phi \) along \( a \). 
       We refer the reader to \cite{crainic_lectures_2011} for more details on the geometry of \( A \)-connections. 
       
       The key fact is that, for \( A \) connections, parallel transport along an \(A \)-path is well-defined and 
       the equation \( \overline{\nabla}_a X = 0 \) is equivalent to stating that \( X \) is parallel along \( a\).

       Now, the \( u \) part represents the vertical (fiber-wise) component of the tangent vector while \( X \) represents the ``horizontal'' component of the tangent vector. In particular, if we consider the differential of the projection we get:
       \[ T_a p : T_a \CP(A) \to T_{p \circ a} C^2([0,1],M)  \qquad (u,X) \mapsto X \]
       Therefore we have:
       \[ T_a \hat s \colon T_a \CP(A) \to T_{p \circ a(0)} M \qquad (u,\phi) \mapsto \phi(0) \]
       If we are given any \(v \in  T_{p \circ a(0)} M \) and we let \( \phi \) be the unique solution to the initial value problem:
       \[ \overline{\nabla}_a \phi = 0 \qquad \phi(0) = v \]
       Then it follows that \( (0,\phi) \in T_a \CP(A)\) is a tangent vector with the property that \( T_a \hat s ( 0,\phi) = v \). This shows that \( \hat s \) is a submersion of Banach manifolds.

       Now, since the co-domain of \( \hat s \) is finite dimensional, the kernel of \( T \hat s \) must admit a closed complement at each point. Therefore, the inverse function theorem for Banach manifolds applies and \( \hat s \) admits local sections through every point in its domain. This immediately implies that \( \hat s \) is a local subduction. As an immediate consequence, \( s \colon \Pi_1(A) \to M \) is also a local subduction.
\end{proof}
In a neighborhood of the identity, the fundamental groupoid of an algebroid is just a quotient of \( A^\circ \). Therefore, it will be useful to know, precisely, which kinds of quotients of local groupoids are singular Lie groupoids. The next theorem answers this question.
\begin{theorem}\label{theorem:classify.qeds.groupoids}
Suppose \( \CG \grpd M\) is a diffeological groupoid where the source map is a local subduction. Suppose further that \( \til \CG\) is a local Lie groupoid over the same manifold and we have \( \pi \colon \til \CG \to \CG \), a local subduction, such that \( [\pi] \colon \til \CG \to \CG\) is a homomorphism of local groupoids covering the identity.

If for all unit elements \( u \in \CG \) we have that the fiber \( \pi\inv(u) \subset \til \CG \) is totally disconnected, then \( \CG \grpd M\) is a \( \QUED\)-groupoid and \( \pi \) is a wide local groupoid chart.
\end{theorem}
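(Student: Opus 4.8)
The plan is to reduce the statement to a germ-along-the-units claim and then quote Lemma~\ref{lemma:locally.Weinstein.is.weinstein}. Since $\CG\grpd M$ is a diffeological groupoid over a manifold and $\s$ is a local subduction, that lemma shows it is enough to exhibit an open neighborhood $\CU$ of the units in $\CG$ which is a singular local Lie groupoid and for which $\pi$, restricted to a neighborhood of the units in $\til\CG$, is a wide local Lie groupoid chart in the sense of Definition~\ref{definition:local.lie.groupoid.chart}. (The remaining assertion that $\pi\colon\til\CG\to\CG$ is a wide local groupoid chart then follows, as it concerns only the germ of $\pi$ along the units and $\CG$ will be a singular Lie groupoid.) I may therefore replace $\til\CG$ by a neighborhood $\til V$ of its units — a local groupoid is canonically isomorphic to any such neighborhood — chosen small enough that $\til\m$, $\til\i$ and the finitely many local groupoid identities invoked below are all defined on $\til V$. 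The heart of the proof is to show that $\pi\colon\til V\to\CU:=\pi(\til V)$ is a quasi-étale chart; here $\CU$ is open because local subductions are open, and it contains the units because $[\pi]$ covers the identity.

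Of the three defining conditions of a quasi-étale map, (QE1) is the hypothesis. For (QE2): the fibers of $\pi$ over unit points are totally disconnected by hypothesis, and for an arbitrary $h\in\CU$, picking $g_0\in\pi\inv(h)\cap\til V$, right translation $g\mapsto\m(g,\i(g_0))$ is a local diffeomorphism of $\til\CG$ near the units which — because $[\pi]$ is a homomorphism — carries $\pi\inv(h)$ near $g_0$ into the totally disconnected fiber $\pi\inv(\u(\t(h)))$; hence $\pi\inv(h)$ is totally disconnected. For (QE3) I use Lemma~\ref{lemma:quasi.étale.alternate} to reduce to a smooth $f\colon\CO\to\til V$ with $\pi\circ f=\pi$ and a fixed point $f(g_0)=g_0$, and then run the argument of Lemma~\ref{lemma:homogeneous.space}, (QE3), inside the groupoid: set $\overline{f}(g):=\m(f(g),\i(g))$. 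Since $[\pi]$ is a homomorphism and $\pi f=\pi$ we get $\pi\circ\overline{f}=\u\circ\t\circ\pi$, while the source/target identities force $\overline{f}(g)$ to be an isotropy arrow over $\til\t(g)$; thus $\overline{f}$ takes values in $\pi\inv(\u(M))$, which is totally disconnected. Consequently, along any curve tangent to a fiber of the submersion $\til\t$ the composite $\pi\circ\overline{f}$ is constant and hence $\overline{f}$ is constant; after shrinking $\CO$ around $g_0$ so that the $\til\t$-fibers meeting $\CO$ are connected, $\overline{f}=\rho\circ\til\t$ for a unique smooth isotropy-valued $\rho$ with $\pi\circ\rho=\u$. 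Then $f(g)=\m(\rho(\til\t(g)),g)$ is left translation by $\rho$, and hence a local diffeomorphism (its inverse near $g_0$ is $g\mapsto\m(\i(\rho(\til\t(g))),g)$). So $\pi$ is quasi-étale near the units, and $\CU$, being the image of a quasi-étale chart, is a quasi-étale diffeological space.

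Finally, to see that $\CU$ is a singular local Lie groupoid I must check that $\s|_{\CU}$ — and hence, since $\i$ is a diffeomorphism interchanging source and target, also $\t|_{\CU}$ — is a $\QUED$-submersion. As $\s|_{\CU}$ is a local subduction onto the manifold $M$, Theorem~\ref{theorem:qeds.submersion.criteria} reduces this to showing that each fiber $\s\inv(x)\cap\CU$ is quasi-étale; but the entire argument of the previous paragraph goes through verbatim inside the submanifold $\til\s\inv(x)\cap\til V$ (the maps $\overline{f}$, $\rho$ and the translations used all respect the $\s$-fiber through $x$, and $\til\t$ restricted to an $\s$-fiber is still a submersion onto its orbit), so $\pi$ restricts to a quasi-étale chart $\til\s\inv(x)\cap\til V\to\s\inv(x)\cap\CU$ and the latter is quasi-étale. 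Therefore $\CU$ is a singular local Lie groupoid, $\pi|_{\til V}\colon\til V\to\CU$ is a wide local Lie groupoid chart (it is a quasi-étale chart and $[\pi]$ is a local groupoid morphism covering the identity by hypothesis), and Lemma~\ref{lemma:locally.Weinstein.is.weinstein} now yields that $\CG\grpd M$ is a singular Lie groupoid — i.e.\ a $\QUED$-groupoid over a manifold — and that $\pi\colon\til\CG\to\CG$ is a wide local Lie groupoid chart. I expect the main obstacle to be (QE3): identifying $f$ with a left translation by an isotropy-valued section, and verifying that every groupoid identity used is valid in the local groupoid near the units.
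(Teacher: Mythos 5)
Your proposal is correct and takes essentially the same route as the paper's proof: reduce to a neighborhood of the units via Lemma~\ref{lemma:locally.Weinstein.is.weinstein}, verify (QE2) and (QE3) for \( \pi \) by dividing by a fixed element resp.\ by the identity and exploiting total disconnectedness of the fibers over units to identify the endomorphism with a translation, and then apply Theorem~\ref{theorem:qeds.submersion.criteria} and rerun the same argument on source fibers. One small slip: \( \pi\inv(\u(M)) \) is not totally disconnected as a whole (it contains the unit section \( \til\u(M) \)), but your operative argument — that along each connected \( \til\t \)-fiber \( \overline f \) maps into a single totally disconnected fiber \( \pi\inv(\u(x)) \) and is therefore constant — is exactly the paper's and repairs this.
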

\begin{proof}
    By Lemma~\ref{lemma:locally.Weinstein.is.weinstein} it suffices to show that the image of an open neighborhood of the units in \( \til \CG \) under \( \pi \) is a local singular Lie groupoid.

    To set our notation, we will use \( \til s, \til t , \til \u , \til \m \) and \( \til \i \) to denote the local groupoid structure on \( \til \CG\) and \( s,t, u, m, i\) the local groupoid structure on \( \CG \).

    We begin by arguing that \( \pi \) is a quasi-étale chart (in some neighborhood of the units). We already know that \( \pi \) is a local subduction.
    Therefore, we need to show that \( \pi \) has totally disconnected fibers (in a neighborhood of the units) and satisfies the rigid endomorphism property. 

    We know that \( \pi \) has totally disconnected fibers over the units of \( \CG \). We claim that the fibers of \( \pi \) are totally disconnected in a neighborhood of the units of \( \til \CG \).

    To see why, consider the following calculation:
    \[ (b \cdot  a\inv) \cdot a = b \cdot (a\inv \cdot a) = b \cdot (\u(\s(a))) = b \]
    Let \( \CU \subset \til \CG \) be an open neighborhood of the identity with the property that for all \( a,b \in \CU \) with \( \s(a) = \s(b)\) we have that the above calculation is well-defined in \( \til \CG \).

    Now given \( \til g_0 \in \CU \) and let \( g_0 = \pi(\til g)\). Consider the function:
    \[ d \colon \pi\inv(g_0) \cap \CU \to \til \CG \qquad x \mapsto x \cdot {\til g_0}\inv  \]
    This function is well-defined since we are in \( \CU \). Furthermore, this function is injective due to the fact that if we have \( x \cdot a \inv = y \cdot a \inv \) we have that:
    \[ x = (x \cdot  a \inv ) \cdot a = (y \cdot a\inv ) \cdot a = y  \]
    Furthermore, \( d_a \) takes values in the kernel of \( \pi \). Specifically, it takes values in \( \pi\inv(\u(t(g_0)))\), which are totally disconnected by assumption. Since \( d\) is a smooth injection into a totally disconnected set, it follows that the domain of \( d \) is totally disconnected.

    This shows the fibers of \( \pi \) are totally disconnected when \( \pi \) is restricted to a small enough neighborhood of the units. Therefore, we can assume without loss of generality that, from now on, the fibres of \( \pi \) are totally disconnected.

    Next, we must show that \( \pi \) satisfies the rigid endomorphism property.

    Let us take \( \CU \subset \til \CG \) to be an open subset of \( \til \CG \) with the property that for all \( x , y \in \CU \) such that \( s(x) = s(y) \) we have that \( x \cdot y\inv \) is well-defined.

    We use the simplified criteria from Lemma~\ref{lemma:finding.quasi.étale.charts} to show the rigid endomorphism property.
    Let \(  g_0 \in \CU \) be arbitrary and \( f \colon \CO \to \til \CG\) be a smooth function with the property that \( \CO \) is an open neighborhood of \(  g_0 \), \( f(  g_0 ) =  g_0 \), and \( \pi \circ f = \pi \).
    We only need to show that \( f \) is a diffeomorphism in a neighborhood of \(  g_0 \).

    Let \( \alpha\) be the following function:
    \[ \alpha \colon \CO \to \til \CG \qquad f(g)\cdot g\inv \]
    Note \( \alpha \) will be well-defined by our assumptions on \( \CU \).

    If we apply \( \pi \) to \( \alpha \) we observe that \( \pi \circ \alpha \) takes values only in units. Now let us chose \( \CO \) to be an open neighborhood of \( g_0\) with the property that \( \CO \cap \til t\inv(x) \) is connected for all \( x \in M\). Since the fibers of \( \pi \) are totally disconnected, it follows that \( \alpha \) must be constant on the \( \til t\)-fibers. 

    In particular, there must exist a local section \( \sigma \colon \til t (\CO) \to \til \CG \) with the property that:
    \[ \forall g \in \CO \qquad \alpha(g) = \sigma(\til t(g))  \]
    From this we can conclude that:
    \[ \forall g \in \CO \qquad f(g) \cdot g\inv = \sigma(\til t(g)) \]
    Which means that:
    \[ \forall g \in \CO \qquad f(g) = \sigma(\til t(g)) \cdot g \]
    From this it immediately follows that \( f \) is a diffeomorphism since it is given by translation along a section. In fact, it has an explicit inverse:
    \[ f\inv(g) = \sigma(\til t(g))\inv \cdot g \]

    To conclude showing that the image of \( \pi \) is a local singular Lie groupoid, the only thing left to show is that the source map \( s \colon \CG \to M \) is a \( \QUED\)-submersion. We will use the criteria from Theorem~\ref{theorem:qeds.submersion.criteria} which means that we need to show that for all \( x \in M \) we have that \( s\inv(x) \) is a quasi-étale diffeological space.

    Therefore, we just need to show that \( \pi|_{\til \s\inv(x)} \) is a quasi-étale chart. 
    Let \( X := \pi(\til \s\inv(x)) \). 
    We already know it is a local subduction and that the fibers are totally disconnected. 
    It only remains to show that it satisfies the rigid endomorphism property.

    We use the simplified criteria from Lemma~\ref{lemma:finding.quasi.étale.charts}. Suppose \( \CO \subset \til s\inv(x) \) is open and \( f \colon \CO \to \til \s\inv(x)  \) is a smooth function such that \( \pi \circ f = \pi \) and \( f(\til g_0) = \til g_0 \) for some point \(\til g_0 \in \CO \). We need to show that \( f \) is a diffeomorphism in a neighborhood of \( \til g_0 \).
  
    We need to show that \( f \) is a local diffeomorphism. The argument is very similar from the one earlier in the proof. Let:
    \[  \alpha \colon \CO \to \til \s\inv(x) \qquad \alpha(g) := f(g) \cdot g\inv\]
    Applying \( \pi \) to \( \alpha \) we get that:
    \[ \pi \circ \alpha  (g) = \u \circ t(g) \] 
    If we chose \( \CO \) such that the intersection with the \( t \)-fibers is connected, then we can conclude that \( \alpha \) is constant when restricted to \( t \)-fibers.
    
    Therefore, there must exist a smooth section of the source map \( \sigma \colon \t(\CO) \to \til \CG  \) with the property that:
    \[ \alpha(g) = \sigma( \til t(g)) \]
    Therefore, we can conclude that:
    \[ 
    f(g) \cdot g\inv = \sigma(\til t(g)) \] 
    Therefore:
    \[f(g) = \sigma(\til t(g)) \cdot g \]
    Which, by similar reasoning, is a diffeomorphism.
\end{proof}
We can now finish the proof of the main theorem for this section.
\begin{proof}[Proof of Theorem~\ref{theorem:main.wein.groupoid}]
        Let \( \pi \colon A^\circ \to \Pi_1(A) \) be the projection from Section~\ref{section:weinstein.groupoid.construction}. Recall that \( A^\circ \) is an open subset of the zero section in \( A \) and that \( A^\circ \) is equipped with a unique local groupoid structure that makes \( [\pi] \colon A^\circ \to \Pi_1(A) \) a local groupoid homomorphism and the zero section of \( A^\circ \) the unit embedding.

        Note that by Lemma~\ref{lemma:source.is.subduction} we know the source map of \( \Pi_1(A) \) is a local subduction. Furthermore, we know that \( \pi \) is a subduction by Lemma~\ref{lemma:exponential.is.subduction}. By Theorem~\ref{theorem:classify.qeds.groupoids} we conclude that \( \Pi_1(\CG) \) is a singular Lie groupoid.

        Furthermore, the map \( \pi \colon A^\circ \to \Pi_1(\CG) \) is a local groupoid chart. Therefore, the Lie algebroid of \( \Pi_1(\CG)\) is canonically isomorphic to the Lie algebroid of the local groupoid structure on \( A^\circ\) which is just \( A \).
    \end{proof}

\bibliographystyle{halpha}
\bibliography{references.bib}

\begin{thebibliography}{CMS20b}

\bibitem[Ahm23]{ahmadi_submersions_2023}
Alireza Ahmadi.
\newblock Submersions, immersions, and étale maps in diffeology.
\newblock {\em arXiv (preprint)}, 2023.
\newblock https://arxiv.org/pdf/2203.05994.pdf.

\bibitem[AM85]{almeida_suites_1985}
R~Almeida and P~Molino.
\newblock Suites d’{Atiyah}, feuilletages et quantification.
\newblock {\em Université de Montpellier}, Sémin. Géom. Univ. Sci. Tech.
  Languedoc:39--59, 1985.

\bibitem[AZ20]{androulidakis_integration_2020}
Iakovos Androulidakis and Marco Zambon.
\newblock Integration of {Singular} {Subalgebroids}.
\newblock {\em arXiv (preprint)}, August 2020.
\newblock arXiv: 2008.07976.

\bibitem[Blo23]{blohmann_elastic_2023}
Christian Blohmann.
\newblock Elastic diffeological spaces.
\newblock {\em arXiv (preprint)}, 2023.
\newblock https://arxiv.org/pdf/2301.02583.pdf.

\bibitem[BNZ20]{bursztyn_principal_2020}
Henrique Bursztyn, Francesco Noseda, and Chenchang Zhu.
\newblock Principal actions of stacky {Lie} groupoids.
\newblock {\em International Mathematics Research Notices. IMRN},
  2020(16):5055--5125, 2020.

\bibitem[CF01]{cattaneo_poisson_2001}
Alberto~S. Cattaneo and Giovanni Felder.
\newblock Poisson sigma models and symplectic groupoids.
\newblock In {\em Quantization of {Singular} {Symplectic} {Quotients}}, pages
  61--93. Birkhäuser Basel, 2001.

\bibitem[CF03]{crainic_integrability_2003}
Marius Crainic and Rui~Loja Fernandes.
\newblock Integrability of {Lie} brackets.
\newblock {\em Annals of Mathematics. Second Series}, 157(2):575--620, 2003.

\bibitem[CF11]{crainic_lectures_2011}
Marius Crainic and Rui~Loja Fernandes.
\newblock Lectures on integrability of {Lie} brackets.
\newblock In {\em Lectures on {Poisson} geometry}, volume~17 of {\em Geom.
  {Topol}. {Monogr}.}, pages 1--107. Geom. Topol. Publ., Coventry, 2011.

\bibitem[Che77]{chen_iterated_1977}
Kuo~Tsai Chen.
\newblock Iterated path integrals.
\newblock {\em Bulletin of the American Mathematical Society}, 83(5):831--879,
  1977.

\bibitem[CMS20a]{cabrera_local_2020}
Alejandro Cabrera, Ioan Mărcuţ, and María~Amelia Salazar.
\newblock On local integration of {Lie} brackets.
\newblock {\em Journal für die Reine und Angewandte Mathematik. [Crelle's
  Journal]}, 760:267--293, 2020.

\bibitem[CMS20b]{crainic_deformations_2020}
Marius Crainic, João~Nuno Mestre, and Ivan Struchiner.
\newblock Deformations of {Lie} groupoids.
\newblock {\em International Mathematics Research Notices. IMRN},
  2020(21):7662--7746, 2020.

\bibitem[CW16]{christensen_tangent_2016}
J.~Daniel Christensen and Enxin Wu.
\newblock Tangent spaces and tangent bundles for diffeological spaces.
\newblock {\em Cahiers de Topologie et Géométrie Différentielle
  Catégoriques}, 57(1):3--50, 2016.

\bibitem[FM20]{fernandes_associativity_2020}
Rui~Loja Fernandes and Daan Michiels.
\newblock Associativity and integrability.
\newblock {\em Transactions of the American Mathematical Society},
  373(7):5057--5110, 2020.

\bibitem[Hec95]{hector_geometrie_1995}
G.~Hector.
\newblock Géométrie et topologie des espaces difféologiques.
\newblock In {\em Analysis and geometry in foliated manifolds ({Santiago} de
  {Compostela}, 1994)}, pages 55--80. World Sci. Publ., River Edge, NJ, 1995.

\bibitem[HMV02]{hector_diffeological_2002}
Gilbert Hector and Enrique Macías-Virgós.
\newblock Diffeological groups.
\newblock In {\em Research and {Exposition} in {Mathematics}}, volume~25, pages
  247--260, 2002.

\bibitem[IZ85]{iglesias-zemmour_fibrations_1985}
Patrick Iglesias-Zemmour.
\newblock {\em Fibrations difféologiques et homotopie}.
\newblock PhD thesis, Université de Provence, Marseille, 1985.

\bibitem[IZ13]{iglesias-zemmour_diffeology_2013}
Patrick Iglesias-Zemmour.
\newblock {\em Diffeology}, volume 185 of {\em Mathematical {Surveys} and
  {Monographs}}.
\newblock American Mathematical Society, Providence, RI, 2013.

\bibitem[KM22]{karshon_quasifold_2022}
Yael Karshon and David Miyamoto.
\newblock Quasifold groupoids and diffeological quasifolds.
\newblock {\em arXiv (preprint)}, June 2022.
\newblock https://arxiv.org/pdf/2206.14776.pdf.

\bibitem[Lan99]{lang_fundamentals_1999}
Serge Lang.
\newblock {\em Fundamentals of differential geometry}, volume 191 of {\em
  Graduate {Texts} in {Mathematics}}.
\newblock Springer-Verlag, New York, 1999.

\bibitem[Mal41]{malcev_sur_1941}
A.~Malcev.
\newblock Sur les groupes topologiques locaux et complets.
\newblock {\em C. R. (Doklady) Acad. Sci. URSS (N.S.)}, 32:606--608, 1941.

\bibitem[Pra99]{prato_sur_1999}
Elisa Prato.
\newblock Sur une généralisation de la notion de {V}-variété.
\newblock {\em Comptes Rendus de l'Académie des Sciences. Série I.
  Mathématique}, 328(10):887--890, 1999.

\bibitem[Sou84]{souriau_groupes_1984}
J.-M. Souriau.
\newblock Groupes différentiels de physique mathématique.
\newblock In {\em South {Rhone} seminar on geometry, {II} ({Lyon}, 1983)},
  Travaux en {Cours}, pages 73--119. Hermann, Paris, 1984.

\bibitem[Sou06]{Souriau_groupes_2006}
Jean-Marie Souriau.
\newblock Groupes différentiels.
\newblock In {\em Differential {Geometrical} {Methods} in {Mathematical}
  {Physics}}, pages 91--128. Springer, 2006.

\bibitem[TZ05]{tseng_integrating_2005}
Hsian-Hua Tseng and Chenchang Zhu.
\newblock Integrating {Poisson} manifolds via stacks.
\newblock In {\em Travaux mathématiques. {Fasc}. {XVI}}, volume~16 of {\em
  Trav. {Math}.}, pages 285--297. Univ. Luxemb., Luxembourg, 2005.

\bibitem[TZ06]{tseng_integrating_2006}
Hsian-Hua Tseng and Chenchang Zhu.
\newblock Integrating {Lie} algebroids via stacks.
\newblock {\em Compositio Mathematica}, 142(1):251--270, 2006.

\bibitem[Zhu08]{zhu_lie_2008}
Chenchang Zhu.
\newblock Lie {II} theorem for {Lie} algebroids via stacky {Lie} groupoids.
\newblock In {\em Trends in mathematics}, pages 115--139. Universitätsdrucke
  Göttingen, Göttingen, 2008.

\bibitem[Še05]{severa_title_2005}
Pavel Ševera.
\newblock Some title containing the words `homotopy' and `symplectic', e.g.
  this one.
\newblock {\em Travaux Mathématiques, Univ. Luxemb.}, XVI:121--137, 2005.
\newblock based on talk at Poisson 2000, Luminy.

\end{thebibliography}

\end{document}